\setlist{itemsep=0.4pt,topsep=0pt}
\DeclareMathOperator{\bl}{b}
\DeclareMathOperator{\cl}{cl}
\DeclareMathOperator{\acl}{acl}
\DeclareMathOperator{\tp}{tp}
\DeclareMathOperator{\Th}{Th}
\newcommand{\Cor}{\mathcal{C}_\mathrm{or}}
\newcommand{\Cun}{\mathcal{C}_\mathrm{un}}
\newcommand{\mons}{\mathbb{M}}
\newcommand{\strong}{\mathrm{strong}}
\newtheoremstyle{note}% hnamei
{8pt}% hSpace abovei
{14pt}% hSpace belowi
{\normalfont}% hBody fonti
{}% hIndent amounti1
{\bfseries}% hTheorem head fonti
{ }% hPunctuation after theorem headi
{.5em}% hSpace after theorem headi2
{}% hTheorem head spec (can be left empty, meaning ‘normal’)i
\theoremstyle{note}
\newtheorem{thm}{Theorem}[section]
\newtheorem{lemma}[thm]{Lemma} 
\newtheorem{df}[thm]{Definition}
\newtheorem{rmk}[thm]{Remark} 
\newtheorem{prop}[thm]{Proposition}
\newtheorem{cor}[thm]{Corollary}
\newtheorem{ex}[thm]{Example}
\newtheorem*{notation}{Notation} 
\title{The elementary theory of free Steiner triple systems}
\author{Silvia Barbina,  Enrique Casanovas \thanks{Research partially supported by grants PID2020-116773GB-I00 and PID2023-147428NB-I00 from the Spanish Government. The first author has also been supported by PRIN2022 \textit{Models, sets and classifications}, prot. 2022TECZJA.}}
\begin{document}
	\maketitle

		\begin{abstract}
Free Steiner triple systems (STS) are infinite structures that are naturally characterised by a universal property. We consider the class of free STSs from a model theoretic viewpoint. We show that free STSs on any number of generators are elementarily equivalent. We axiomatise their theory and show that it is stable.
	\end{abstract}
	
	\section{Introduction}
	
	The subject of this paper is the first-order theory of a class of combinatorial structures called free Steiner triple systems. 	
	A \textbf{Steiner triple system} (STS) of order $\lambda$ is a pair $(M, \mathcal{B})$ where $M$ is a set with $\lambda$ elements, and 
 $\mathcal{B}$ is a collection of 3-element subsets of $M$ (the \textbf{blocks})
such that any two $x, y \in M$ are contained in exactly one block. 
A set $M$ with a collection of 3-element subsets is a \textbf{partial STS} (PSTS) if any two elements of $M$ belong to at most one block.

Steiner triple systems are well known structures, widely studied in the finite case. Recently, certain infinite incarnations have appeared in the model-theoretic literature, e.g. as examples of strongly minimal structures \cite{bald22}, or as specific constructions such as Fra\"{\i}ss\'e amalgamations \cite{barcas}.
A notable construction method, that also applies to incidence structures and, more generally, to algebras, generates \textit{free} Steiner triple systems. Universal algebra gives a standard characterization of free constructions, which captures the intuition that an infinite STS can be obtained from an initial set $A$ containing $\lambda$ vertices with no blocks defined among them by adding, for each pair of vertices $a_1, a_2 \in A$, a new point that gives the unique block to which $a_1$ and $a_2$ belong. Iterating this process $\omega$ times gives an STS. The template example in \cite{chicotetal} is, essentially, the result of this construction for $\lambda=3$. 

Standard universal-algebraic tools tell us that such constructions are uniquely determined up to isomorphism by the cardinality of the set $A$. A series of natural questions then arises, such as which first-order properties, if any, are shared by two free constructions where the sizes of the initial generating sets are different.

Questions of this kind have been addressed for projective planes \cite{hyttpao} and for generalised polygons \cite{ammertent}.  Towards the completion of the write-up of our results, we became aware of  \cite{alaimo} and \cite{paoquad}. The latter purports to provide a common framework to prove the completeness and strict stability of the theories of a range of classes of \textit{open} incidence structures, including free Steiner triple systems. 

A Steiner triple system can be regarded as a first-order structure in the language containing a ternary relation symbol $R$ and satisfying the axioms
\begin{itemize}
\item for all permutations $\sigma \in S_3$, $ \forall x_1 x_2 x_3 \, \left(R(x_1, x_2, x_3) \rightarrow R(x_{\sigma(1)}, x_{\sigma(2)}, x_{\sigma(3)} \right)$
\item $\forall x y z \left( R(x,y,z) \rightarrow y\neq z \right)$
\item $\forall x y w z \left( R(x,y,w) \wedge R(x,y,z) \rightarrow w=z \right)$.
\end{itemize}
The relation $R$ is interpreted as the set of blocks: three elements $x, y$ and $z$ are in a block if and only if $R(x,y,z)$. With this choice of language, a substructure of an STS is a partial STS.

An STS can also be described as a \textbf{Steiner quasigroup}, that is, a structure with a single binary operation symbol $\cdot$, interpreted as an operation that satisfies the following axioms:
 \begin{enumerate}
\item $\forall x \, (x \cdot y= y\cdot x)$
\item $\forall x \, (x\cdot x =x)$
\item $\forall x \, (x\cdot (x\cdot y)=y)$.
\end{enumerate}

A quasigroup $M$ determines an STS whose blocks are the elements of the graph of the operation $\cdot$ without the diagonal, that is,
\[ \mathcal{B} = \{ \{x,y,z \} \subseteq M : x \cdot y = z \} \smallsetminus \{ \{x \} : x \in M \} \, .\]
It is easy to see that each STS determines a quasigroup and each quasigroup determines an STS, where
  $$x \cdot y = z \mbox{ iff } R(x,y,z) \mbox{ or } x=y=z.$$ 
A substructure of a Steiner quasigroup is a Steiner quasigroup. Moreover, if $N$ is a Steiner quasigroup and  $A \subseteq N$, there is a natural notion of substructure of $N$  \textbf{generated} by $A$, namely
\[ \langle A \rangle_N = \bigcap \{ M\subseteq N : M \mbox{ is a substructure that contains } A \} \,.\]
The subscript $N$ may be omitted %from $\langle A \rangle_N$ 
when clear from the context.

\begin{rmk}\label{May12_1} Let $M$ be a Steiner quasigroup generated by $A\subseteq M$.  Then
\begin{itemize}
\item for every $b \in M$ there is a term $t(\bar{x})$ such that $b=t(\bar{a})$, where $\bar{a}$ enumerates $A$
\item $M=\bigcup_{n<\omega} S_n$ for a chain of sets $S_n$ such that  $S_0 =A$ and $S_{n+1}= \{a\cdot b\mid a,b\in S_n\}$. Note that $S_n\subseteq S_{n+1}$  since  $a\cdot a = a$.
\end{itemize}
	\end{rmk}
	
	\begin{notation} Below we write $S_{<n}$ for $\bigcup_{i<n} S_i$. \end{notation}

	\begin{df} Let $M$ be a Steiner quasigroup generated by $A\subseteq M$. The sets $S_n$ in Remark~\ref{May12_1} are called the \textbf{levels} of $M$ over $A$. An element $a\in M$ has   \textbf{level $n$ over $A$}  if $a\in S_n\smallsetminus S_{<n}$. % We write $S_n(M)$ or $S_n(A)$ or $S_n(M,A)$  if we need to specify $A$ or $M$.
	\end{df}

From now on, we use the term \textit{Steiner triple system} to cover both the functional and the relational structures; we rely on the context to clarify the usage. 

In Section~\ref{section2} we characterise free STSs as those that are \textit{freely generated} by a subset in the sense of universal algebra, and we recall some standard universal-algebraic results. Section~\ref{section3} introduces certain linear orderings for STSs and partial STSs that are an essential tool in our treatment of the theory of free STSs. Moreover, we show that the existence of an \textit{HF-ordering} on an STS is equivalent to its \textit{unconfinedness}, a first-order condition that is satisfied by all infinite free STSs and will essentially axiomatise their theory. In Section~\ref{section4} we show that the amalgamation property fails for the class $\mathcal{C}_\mathrm{un}$ of finite unconfined partial STSs. We also show that the obvious choice of a predimension function does not yield a fruitful definition of \textit{strong substructure} for the construction of a Hrushovski amalgam. An appropriate notion of strong substrcuture is formulated in Section~\ref{section5} and it involves HF-orderings. We show that the countable free STS with $\omega$ generators is the generic object for $\mathcal{C}_\mathrm{un}$ with respect to this notion. We also give appropriate definitions of \textit{$n$-strong} substructure and \textit{$n$-amalgamation} and use the setup in \cite{wagner} to show that the generic is not saturated. In Section~\ref{section6} we show that infinite unconfined STSs always contain certain binary trees that are an essential ingredient 
in the proof of our main result in Section~\ref{section7}, where we show that the theory of infinite unconfined STSs is complete and it is the theory of infinite free STSs --- so, in particular, any two free infinite Steiner triple systems are elementarily equivalent. We also show that the theory is stable and that it has a model which is not free.

In a first version of this paper, we formulated additional axioms to prove the completeness of the theory. We subsequently  obtained a copy of~\cite{alaimo}, where STSs are described as incidence structures with a predicate for points and another for blocks, a notion of \textit{opennes} is defined which corresponds to unconfinedness in our setup, and the theory of open STSs is shown to be complete. However, we have found the completeness proof to be inconclusive, and therefore we are unable to provide a comparison with our methods and results.

	\section{Free Steiner triple systems: universal algebra}
 \label{section2}
We give a universal algebraic characterisation of free Steiner triple systems  as those that satisfy a \textit{universal mapping property}. 
	
	 \begin{df} Let $\mathcal{C}$ be a class of Steiner triple systems. An STS $M$ generated by $A$ has the \textbf{universal mapping property} (UMP)  \textbf{over $A$ for $\mathcal{C}$} if for every $N \in \mathcal{C}$, every map
 \[ \varphi : A \to N \]
extends to a homomorphism $ \hat{\varphi} : M \to N $.

 \end{df} 
  A standard argument ensures that the extension $\hat{\varphi}$ is unique.
 
 \begin{df} If $M$ is an STS, a subset $A \subseteq M$ is \textbf{independent} if $\langle A \rangle_M$ has UMP  over $A$ for the class of all STSs.

$M$ is  said to be \textbf{freely generated} by $A$ if
\begin{enumerate}
\item[(i)] $A$ is independent
\item[(ii)] $\langle A \rangle_M = M$.
\end{enumerate}

If $M$ is freely generated by $A \subseteq M$ then $M$ is a \textbf{free STS}, and $A$ is a \textbf{free base} of $M$.

\end{df}

	The next proposition formalises the intuition that a Steiner triple system freely generated by a subset $A$ can be constructed from $A$ in stages by adding, at each stage, a new vertex as the product of two existing vertices whose product is still undefined.

	\begin{prop}\label{May12_2} Let  $M$ be a Steiner triple system and let $A\subseteq M$. The following are equivalent:
		\begin{enumerate}
			\item $A$ is a free base of $M$.
			\item $M$ is generated by $A$ with levels $(S_n\mid n<\omega)$, where $S_0=A$ and
			\begin{enumerate}
				\item if $a,b\in S_n\smallsetminus S_{<n}$ and $a\neq b$, then $a\cdot b\not\in S_n$.
				\item if $a\in S_{n+1}\smallsetminus S_n$, there is a unique pair $\{b,c\}\subseteq S_n$ such that $b\neq c$ and $a= b\cdot c$.
			\end{enumerate}
		\end{enumerate}
		Moreover, property (b) implies that if  $a\in S_{n+1}\smallsetminus S_n$, and $b, a\cdot b\in S_n$, then  $\{b,a\cdot b\}$ is the unique pair given by (b) for  $a$.
	\end{prop}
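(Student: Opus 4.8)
The plan is to establish $(2)\Rightarrow(1)$ first, together with the ``Moreover'' clause, and then to deduce $(1)\Rightarrow(2)$ from it. The Moreover clause is quick: if $a\in S_{n+1}\smallsetminus S_n$ and $b,a\cdot b\in S_n$, put $e=a\cdot b$; then $b\cdot e=b\cdot(b\cdot a)=a$ by commutativity and axiom (3), and $b\ne e$ (otherwise $a\cdot b=b$, forcing $a=b$, contradicting $a\notin S_n\ni b$), so $\{b,e\}\subseteq S_n$ is a pair satisfying the conclusion of (b) for $a$ and hence is \emph{the} pair, by the uniqueness asserted there. For $(2)\Rightarrow(1)$, condition (ii) of ``freely generated'' is hypothesised, so I only need to show $A$ is independent, i.e. that $M=\langle A\rangle$ has the UMP over $A$. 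Given an STS $N$ and a map $\varphi\colon A\to N$, define $\hat\varphi$ by recursion on the levels: $\hat\varphi$ agrees with $\varphi$ on $S_0$, and for $a\in S_{n+1}\smallsetminus S_n$ put $\hat\varphi(a)=\hat\varphi(b)\cdot\hat\varphi(c)$, where $\{b,c\}\subseteq S_n$ is the pair given by (b); this is well defined by the uniqueness in (b). The substance is to check $\hat\varphi$ is a homomorphism.

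I would prove $\hat\varphi(x\cdot y)=\hat\varphi(x)\cdot\hat\varphi(y)$ by induction on $k=\max(\mathrm{level}(x),\mathrm{level}(y))$. The combinatorial fact that makes this run is that \emph{a block $\{x,y,z\}$ contained in $S_k$ has a unique vertex of maximal level, the other two being of strictly smaller level}: if two distinct vertices $u,v$ of the block shared the maximal level $m$, then $u,v\in S_m\smallsetminus S_{<m}$, so (a) would give $u\cdot v\notin S_m$, contradicting that $u\cdot v$ is the third vertex and hence lies in $S_m$. Granting this, the case $x=y$ is trivial (both sides are $\hat\varphi(x)$), and if $\max(\mathrm{level}(x),\mathrm{level}(y))<k$ the claim is the inductive hypothesis, so we may assume this maximum is $k$, say $\mathrm{level}(y)=k$ (swapping $x,y$ if necessary, which is harmless as both sides are symmetric). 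Put $z=x\cdot y$; since $x,y\in S_k$ we have $z\in S_{k+1}$. If $z\in S_{k+1}\smallsetminus S_k$, then $\{x,y\}$ is the pair of (b) for $z$ and $\hat\varphi(z)=\hat\varphi(x)\cdot\hat\varphi(y)$ by definition. If $z\in S_k$, then $\{x,y,z\}$ is a block inside $S_k$, so by the fact above $y$ is its top vertex and $\mathrm{level}(x),\mathrm{level}(z)<k$; the inductive hypothesis applied to $(x,z)$ gives $\hat\varphi(y)=\hat\varphi(x\cdot z)=\hat\varphi(x)\cdot\hat\varphi(z)$, whence $\hat\varphi(x)\cdot\hat\varphi(y)=\hat\varphi(x)\cdot(\hat\varphi(x)\cdot\hat\varphi(z))=\hat\varphi(z)$ by axiom (3) in $N$. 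Since $M=\bigcup_k S_k$, this shows $\hat\varphi$ is a homomorphism, so $A$ is a free base of $M$.

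For $(1)\Rightarrow(2)$ I would argue via the uniqueness of homomorphic extensions. Starting from the set $A$ with no blocks, build an STS $F\supseteq A$ by the staged construction sketched before the proposition: at stage $n+1$ adjoin, for each pair $\{b,c\}$ of distinct points already present whose product is not yet defined, a fresh point $z_{\{b,c\}}$ with block $\{b,c,z_{\{b,c\}}\}$. One checks routinely that $F=\bigcup_n S_n$ is a total STS whose levels over $A$ are the stage sets, and that (a) and (b) hold for them --- (b) because each point of level $n+1$ was introduced for a single pair from $S_n$, and (a) because the ``top vertex'' remark again forbids a block inside $S_n$ from having two vertices of level exactly $n$. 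By the implication already proved, $A$ is a free base of $F$. Now the UMP of $M$ over $A$ and the UMP of $F$ over $A$ extend $\mathrm{id}_A$ to homomorphisms $M\to F$ and $F\to M$ whose composites extend $\mathrm{id}_A$ and hence are the identities; so $M\cong F$ by an isomorphism fixing $A$ pointwise. Such an isomorphism respects $\cdot$, so it carries the $n$-th level of $M$ over $A$ onto the $n$-th level of $F$, and (a), (b) transfer back to $M$.

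The main obstacle is the homomorphism check in $(2)\Rightarrow(1)$, precisely the sub-case where $x\cdot y$ falls back inside $S_k$ rather than being a fresh level-$(k+1)$ point: this is exactly where property (a) is used, through the observation that a block lying in $S_k$ has a unique vertex of maximal level, and it is what dictates carrying out the induction on the maximal level of the pair. The remaining ingredients --- that the staged construction yields a total STS, the transfer along the isomorphism, and the Moreover clause --- are routine.
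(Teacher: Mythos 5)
The paper does not actually contain a proof of this proposition --- it is quoted from the companion paper \cite{barcas2} with ``See~\cite{barcas2}'' --- so there is no in-paper argument to compare yours with; I can only assess your proposal on its own terms, and it is correct. The Moreover clause follows from the uniqueness in (b) exactly as you say; your verification of $(2)\Rightarrow(1)$ is sound, the key point being the observation that under (a) a block whose three vertices all lie in some $S_k$ has a unique vertex of maximal level, which is precisely what makes the level-recursive definition of $\hat\varphi$ compatible with products that fall back into $S_k$; and the $(1)\Rightarrow(2)$ direction via the staged construction of $F$, the UMP of both $M$ and $F$ over $A$, and uniqueness of homomorphic extensions on a generating set is a standard and valid universal-algebraic route. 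One small presentational caveat: when you verify (a) for $F$ you invoke ``the top vertex remark again,'' but that remark was derived \emph{from} (a), so as phrased this reads circular; what you should say (and what clearly is true) is that by construction every block of $F$ consists of one fresh vertex together with two vertices introduced at strictly earlier stages, which yields both (a) and (b) for $F$ directly. With that rephrasing the argument is complete.
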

	\begin{proof} See~\cite{barcas2}.
	\end{proof}
	
		\begin{df} Let $M$ be a Steiner triple system with a free base $A\subseteq M$. The sequence $(S_n\mid n<\omega)$ of levels described in Proposition~\ref{May12_2} will be called a \textbf{standard free construction of $M$ over $A$}.
	\end{df}

%	\section{Infinitely generated free Steiner triple systems}
Standard techniques in universal algebra show that the cardinality of a free base $A$ determines the STS freely generated by $A$ up to isomorphism. We state this result below, and refer the reader to \cite{barcas2} for a proof.

\begin{prop}\label{May11_9.1} Let $M,N$ be Steiner triple systems freely generated by $A\subseteq M$ and $B\subseteq N$ respectively. Then $M\cong N$ if and only if $|A|=|B|$. In fact, every bijection between $A$ and $B$ can be extended  to an isomorphism between $M$ and $N$.
	\end{prop}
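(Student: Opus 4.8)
The plan is to deduce everything from the universal mapping property, via the standard universal-algebraic argument that free objects on sets of equal cardinality are isomorphic. First I would prove the easy direction: if $M \cong N$ then, since an isomorphism $f : M \to N$ must send a free base to a free base (freeness is preserved under isomorphism, being defined purely in terms of the structure and its generating set), we get $|A| = |f(A)|$; but $f(A)$ generates $N$ and is independent, and one checks that a free base of $N$ has the same cardinality as $B$ — indeed, any two free bases of the same free STS are equinumerous, which itself follows from the construction below (or from the level decomposition in Proposition~\ref{May12_2}, since a free base is exactly $S_0$, the set of elements of level $0$, and the ``moreover'' clause pins down the levels). So $|A| = |B|$.

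For the converse, and simultaneously for the ``in fact'' clause, let $g : A \to B$ be any bijection. Since $A$ is independent, $\langle A\rangle_M = M$ has the UMP over $A$ for the class of all STSs, and in particular for $N$; composing $g$ with the inclusion $B \hookrightarrow N$ and extending gives a homomorphism $\hat g : M \to N$. Symmetrically, the inverse bijection $g^{-1} : B \to A$ extends to a homomorphism $\widehat{g^{-1}} : N \to M$. Now I would consider the composite $\widehat{g^{-1}} \circ \hat g : M \to M$. On $A$ it restricts to $g^{-1} \circ g = \mathrm{id}_A$, so it is a homomorphism $M \to M$ extending $\mathrm{id}_A : A \to A$. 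But $\mathrm{id}_M$ is another such extension, and the uniqueness of the extension in the UMP (noted right after the definition of UMP) forces $\widehat{g^{-1}} \circ \hat g = \mathrm{id}_M$. The symmetric argument gives $\hat g \circ \widehat{g^{-1}} = \mathrm{id}_N$. Hence $\hat g$ is a bijective homomorphism; since the language of Steiner quasigroups is purely functional, a bijective homomorphism is an isomorphism, so $\hat g : M \to N$ is an isomorphism extending $g$.

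The only genuinely delicate point is making sure the UMP actually applies in the direction we need — i.e. that $N$, the target, really is a member of the class ``all STSs'' over which $M$ has the UMP, and that $g^{-1}$ extends using the independence of $B$ in $N$; both are immediate from the definitions once one is careful that independence of $A$ was stated with respect to the class of \emph{all} STSs, not some restricted subclass. The other routine check is that the extensions respect the operation, which is automatic since they are homomorphisms by construction. I expect the main obstacle, such as it is, to be purely bookkeeping: verifying that a free base is determined by the structure (needed for the forward direction) without circularity — this is cleanest via Proposition~\ref{May12_2}, identifying the free base with level $0$ of any standard free construction and observing that two standard free constructions of the same $M$ share the same level filtration.
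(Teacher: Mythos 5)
Your second half --- the ``in fact'' clause and the direction $|A|=|B|\Rightarrow M\cong N$ --- is the standard universal-algebraic argument via the UMP and uniqueness of extensions of maps defined on a generating set, and it is correct; the paper simply cites \cite{barcas2} for this, and your argument is the expected one.

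The forward direction, however, rests on a claim that is false, namely that ``a free base is determined by the structure'' because it is ``exactly $S_0$'' of any standard free construction, with ``two standard free constructions of the same $M$ sharing the same level filtration.'' The level filtration of Proposition~\ref{May12_2} is defined relative to a \emph{chosen} base, and a free STS generally has many distinct free bases with different filtrations. Already the $3$-element STS (free on $2$ generators) has every $2$-element subset as a free base; and in $F_3$ with free base $\{a_1,a_2,a_3\}$ the set $\{a_1\cdot a_2,\,a_2,\,a_3\}$ is again a free base (the endomorphism fixing $a_2,a_3$ and sending $a_1\mapsto a_1\cdot a_2$ is surjective, since $a_2\cdot(a_1\cdot a_2)=a_1$, and is an automorphism), so $a_1\cdot a_2$ has level $1$ in one construction and level $0$ in another. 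Note also that in any infinite STS \emph{every} element lies in infinitely many blocks, so there is no intrinsic characterisation of base elements as ``non-products.'' What you actually need is not uniqueness of the base but invariance of its \emph{cardinality}, and that requires a different argument: if $A$ is infinite then $|M|=|A|$ (each level has the same cardinality as $A$), which settles the infinite case; for finite bases, use the UMP itself to count homomorphisms into the $3$-element STS $K$: maps $A\to K$ correspond bijectively to homomorphisms $M\to K$, so there are exactly $3^{|A|}$ of them, and this number is an isomorphism invariant distinguishing finite bases of different sizes (and finite from infinite ones, since $3^{|B|}$ is uncountable for $B$ infinite). With that replacement the proof is complete.
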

	
The elementary equivalence of any two infinitely generated free Steiner triple systems follows from a standard argument for infinitely generated free algebras.

	\begin{lemma} \label{lemma2_6}  Let $M,N$ be free Steiner triple systems and assume $A$ is an infinite free base of $M$ that can be extended to a free base $B$ of $N$.  Then  $M\preccurlyeq N$.
	\end{lemma}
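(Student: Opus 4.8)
The plan is to exhibit $M$ as an elementary substructure of $N$ via the Tarski–Vaught test, using the free construction to build, for any finite tuple from $M$ and any potential witness in $N$, an automorphism of $N$ fixing that tuple and moving the witness into $M$. First I would fix the standard free constructions: let $(S_n)$ be a standard free construction of $M$ over $A$ and $(T_n)$ the standard free construction of $N$ over $B$, where $A \subseteq B$. Since $\langle A \rangle_M \cong \langle A \rangle_N$ canonically by Proposition~\ref{May12_2} (both are freely generated by $A$, built by the same stagewise recipe), I may identify $M$ with the substructure $\langle A \rangle_N$ of $N$; so from now on $M \subseteq N$ is literally a substructure and it suffices to check the Tarski–Vaught condition.

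Now take a first-order formula $\varphi(x, \bar{y})$, a tuple $\bar{m}$ from $M$, and suppose $N \models \exists x\, \varphi(x, \bar{m})$, witnessed by some $n \in N$. By Remark~\ref{May12_1}, each entry of $\bar{m}$ and the witness $n$ is a term in finitely many elements of $B$; let $b_1,\dots,b_k \in B$ be a finite subset of the base containing all the base elements needed to express $\bar{m}$, and let $b_{k+1},\dots,b_\ell \in B$ be the additional base elements needed for $n$. The key step is to produce an automorphism $\sigma$ of $N$ with $\sigma(\bar{m}) = \bar{m}$ and $\sigma(n) \in M$. Because $A$ is infinite and $b_1,\dots,b_k$ already lie in $\langle A \rangle_N = M$ (they are among the generators actually used for $\bar{m}$, which are elements of $M$), I can choose a bijection $\beta : B \to B$ that is the identity on a free base of $M$ large enough to contain $\bar{m}$ — concretely, fixing $b_1,\dots,b_k$ and whatever finite part of $A$ is needed — and sends each of $b_{k+1},\dots,b_\ell$ to an element of $A$; this uses only that $A$ is infinite, so it has room to absorb the finitely many extra generators. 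By Proposition~\ref{May11_9.1}, $\beta$ extends to an automorphism $\sigma$ of $N$. Then $\sigma$ fixes $\bar{m}$ (it is a term in fixed generators) and $\sigma(n)$ is a term in elements of $A$, hence $\sigma(n) \in \langle A \rangle_N = M$.

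Finally, since $\sigma$ is an automorphism of $N$, from $N \models \varphi(n, \bar{m})$ we get $N \models \varphi(\sigma(n), \sigma(\bar{m}))$, i.e. $N \models \varphi(\sigma(n), \bar{m})$ with $\sigma(n) \in M$; thus the witness can be found in $M$. By the Tarski–Vaught test, $M \preccurlyeq N$. The main obstacle — and the only place where real care is needed — is the bookkeeping in the construction of $\beta$: one must be sure that the finitely many base elements occurring in $\bar{m}$ genuinely lie in $M$'s base (equivalently, that the identification of $M$ with $\langle A\rangle_N$ is compatible with the chosen free bases), and that after rerouting the extra generators $b_{k+1},\dots,b_\ell$ into $A$ the map $\beta$ remains a bijection of $B$; the infiniteness of $A$ is exactly what makes this possible, and it is why the hypothesis that $A$ is infinite cannot be dropped.
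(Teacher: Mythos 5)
Your proposal is correct and follows essentially the same route as the paper: the Tarski--Vaught test combined with an automorphism of $N$ induced by a permutation of the free base $B$ that fixes the generators of the given tuple and reroutes the finitely many extra generators of the witness into $A$, which is possible because $A$ is infinite. The paper handles the bookkeeping you flag slightly more cleanly by simply choosing a finite $A_0\subseteq A$ with $\bar{a}\in\langle A_0\rangle$ and fixing $A_0$ pointwise (rather than tracking which base elements occur in a term for $\bar{a}$, which would require noting $B\cap\langle A\rangle_N=A$), but this is a cosmetic difference, not a gap.
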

	\begin{proof} We make the following preliminary claim: for every finite $A_0 \subseteq A$ and finite $B_0\subseteq B\smallsetminus A$ there is $\alpha \in \mathrm{Aut}(N/A_0)$ such that $\alpha(B_0) \subseteq A\smallsetminus A_0$. It suffices to find a permutation of $B$   fixing $A_0$ pointwise and sending $B_0$ to some subset of $A\smallsetminus A_0$,  and this can be done because $A$ is infinite.
		We check that $M\preccurlyeq N$ using the Tarski-Vaught Test:  consider a formula $\varphi(\bar{x},y)$  and a tuple $\bar{a}\in M$  such that   $N\models \exists y\varphi(\bar{a},y)$. We seek  $b\in M$  such that  $N\models \varphi(\bar{a},b)$.  Fix $b^\prime\in N$ such that  $N\models \varphi(\bar{a},b^\prime)$, some term $t(\bar{u},\bar{v})$,  and tuples $\bar{c}\in A$ and  $\bar{d}\in B\smallsetminus A$  such that $b^\prime =t(\bar{c},\bar{d})$. Let  $A_0$ be a finite subset of $A$ containing $\bar{c}$ and such that $\bar{a}\in\langle A_0\rangle$,  and let  $B_0$ be a finite subset of $B\smallsetminus A$ containing $\bar{d}$. By the preliminary claim, there is an automorphism $\alpha$ of $N$  fixing $A_0$ pointwise and such that  $\alpha(B_0) \subseteq A\smallsetminus A_0$. Then  $\alpha$ fixes $\bar{a}$  and  hence $T\models \varphi(\bar{a},\alpha(b^\prime))$. But $\alpha(b^\prime)\in \langle A_0\cup \alpha(B_0)\rangle \subseteq M$, and therefore  $\alpha(b^\prime)\in M$.
	\end{proof}
	
	\begin{thm} If $M,N$ are  free Steiner triple systems with infinite free bases, then    $M \equiv N$.
	\end{thm}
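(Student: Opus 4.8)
The plan is to deduce this from Lemma~\ref{lemma2_6} and Proposition~\ref{May11_9.1}, using a single sufficiently large free STS as a common target. Let $A$ be an infinite free base of $M$ and $B$ an infinite free base of $N$; by symmetry assume $|A|\le|B|$. First I would fix a free Steiner triple system $P$ with free base $C$ of cardinality $|B|$, so that $N\cong P$ by Proposition~\ref{May11_9.1}; it then remains to show that $M$ is isomorphic to an elementary substructure of $P$.

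Next, since $|A|\le|C|$, pick $A'\subseteq C$ with $|A'|=|A|$. The one point that requires an argument is the sub-claim that $\langle A'\rangle_P$ is freely generated by $A'$, i.e.\ that a subset of a free base freely generates the substructure it generates. This is standard universal algebra and can be cited from \cite{barcas2}, but it also follows quickly from the UMP: given any STS $N'$ and any $\varphi\colon A'\to N'$, extend $\varphi$ to some $\varphi'\colon C\to N'$ (possible since if $N'=\varnothing$ then already $A'=\varnothing$, which is not our case), take the homomorphism $\hat\varphi'\colon P\to N'$ provided by freeness of $C$, and restrict it to $\langle A'\rangle_P$; the restriction extends $\varphi$, so $\langle A'\rangle_P$ has the UMP over $A'$ for the class of all STSs, and since it is generated by $A'$ it is freely generated by $A'$.

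Granting this, $\langle A'\rangle_P$ and $M$ are free STSs with free bases of the same infinite cardinality $|A|$, hence $\langle A'\rangle_P\cong M$ by Proposition~\ref{May11_9.1}; and $A'$ is an infinite free base of $\langle A'\rangle_P$ that extends to the free base $C$ of $P$, so Lemma~\ref{lemma2_6} yields $\langle A'\rangle_P\preccurlyeq P$. Combining, $M\cong\langle A'\rangle_P\preccurlyeq P\cong N$, so $M\equiv N$. The only delicate point is the sub-claim, where one must check the UMP against \emph{all} STSs, including degenerate ones; infiniteness (hence non-emptiness) of $A'$ makes those edge cases vacuous, so I expect no real obstacle — the rest is bookkeeping with cardinalities.
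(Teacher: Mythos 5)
Your proof is correct and follows essentially the same route as the paper: both realize $M$, up to isomorphism, as the substructure of $N$ (your $P$) generated by a subset of an infinite free base and then invoke Lemma~\ref{lemma2_6} together with Proposition~\ref{May11_9.1}. The only difference is that you spell out, via the UMP, the sub-claim that a subset of a free base freely generates the substructure it generates, which the paper leaves implicit in the assertion that the injection of bases extends to an embedding.
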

	\begin{proof}  Let $\kappa, \lambda$ be the infinite cardinalities of  free bases of $M,N$ respectively. We may assume $\kappa \leq \lambda$.  Let $A$ be a free base of $M$ of cardinality $\kappa$ and $B$ a free base of $N$ of cardinality $\lambda$. Let $f:A\rightarrow B$ be a one-to-one mapping. Then $f$ extends to an embedding $f^\prime:M\rightarrow N$. By Lemma~\ref{lemma2_6}, $f^\prime$ is elementary, hence $M\equiv N$.
	\end{proof}

	\section{HF-orderings and confined configurations} \label{section3}
	
	The idea of \textit{free} and \textit{hyperfree extension processes} appears in the context of free projective planes in \cite{sieben}, and gives rise to the notion of free and hyperfree orderings for Steiner triple systems and partial STSs respectively. % In the case of finitely generated STSs, such orderings arise naturally from the construction in levels described in Proposition~\ref{May12_2}. The definitions, however, extend to the infinitely generated case.

	\begin{df}
		A \textbf{free ordering}  (or \textbf{F-ordering})   of a Steiner triple system $M$ \textbf{over} $A\subseteq M$ is a linear ordering $<$ of $M$ such that for every $a\in M$  there is a unique pair $\{a_1,a_2\}\subseteq M$ with $a=a_1\cdot a_2$,  and, for each $i$, either $a_i\in A$ or $a_i<a$. Notice that when $a\in A$, the only possibility is $a_1=a_2=a$.
	\end{df}
	
	In our discussion of linear orderings we use standard operations as defined, for instance, in~\cite{rosenstein}. In particular, if $(A,<_A)$ and $(B,<_B)$ are linear orderings and $A\cap B=\emptyset$, then their sum  is the linear order 
		\[(A\cup B, <)= (A,<_A) + (B,<_B) \, ,\] 
where $<$ is defined as $<_A\cup <_B \cup \, (A\times B)$.  Recall that a well-founded linear ordering is a well-order.
	
	\begin{rmk}\label{May12_12.0} Let $M$ be a Steiner triple system with standard free construction \linebreak $(S_n\mid n<\omega)$ over $A$. Let $<_n$ be a linear ordering of $S_n\smallsetminus S_{<n}$ and for $a,b\in M$ define
		$$a<b \mbox{ iff for some } n,\; b\in S_n\smallsetminus S_{<n} \mbox{ and either } a\in S_{<n} \mbox{ or } a\in S_n\smallsetminus S_{<n} \mbox{ and } a<_n b.  $$
Then $<$ is an F-ordering of $M$ over $A=S_0$, which is an initial segment, and $<$ is in fact the generalized sum  $\sum_{n<\omega}(S_n\smallsetminus S_{<n}, <_n)$. If every $<_n$ is a well-order, $<$ is a well-order.
		
		\end{rmk}

	\begin{rmk}\label{May12_12.1}
		 Let $M$ be a Steiner triple system. If there is an F-ordering of $M$ over $A$, there is another one where $A$ is an initial segment. If the original order is a well-order, so is the new order; moreover, $M=\langle A\rangle$.
	\end{rmk}
	\begin{proof}
		If $<$ is an F-ordering of $M$ over $A$, then the sum $(A,<\restriction A)+(M\smallsetminus A, <\restriction (M\smallsetminus A))$ defines an F-ordering of $M$ over $A$ with initial segment $A$.
	\end{proof}	
	
	\begin{df} A \textbf{hyperfree ordering}  (or \textbf{HF-ordering}) of a partial Steiner triple system  $A$ is a linear ordering $<$ of $A$ such that for every $a\in A$ there is at most one pair of smaller elements $\{a_1,a_2\}\subseteq A$ with $a=a_1\cdot a_2$,  that is, there is at most one block $\{a,a_1,a_2\}$ with $a_1,a_2<a$.
	\end{df}

The next proposition explains the connection between free bases and F-/HF-orderings.

	\begin{prop}\label{May12_2.2}Assume $M$ is a Steiner triple systems and $A\subseteq M$. The following are equivalent:
		\begin{enumerate}
			\item $A$ is a free base of $M$
			\item There is a well-founded F-ordering of $M$ over $A$
			\item There is a well-founded HF-ordering of $M$ and $A$ is the set of points that are not a product of smaller elements.
		\end{enumerate}
		\end{prop}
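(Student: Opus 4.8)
The plan is to establish the cycle $(1)\Rightarrow(2)\Rightarrow(3)\Rightarrow(1)$.

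The implication $(1)\Rightarrow(2)$ is almost immediate. If $A$ is a free base of $M$, Proposition~\ref{May12_2} provides a standard free construction $(S_n\mid n<\omega)$ of $M$ over $A$; using the axiom of choice, well-order each level $S_n\smallsetminus S_{<n}$ by some $<_n$ and let $<$ be the generalized sum $\sum_{n<\omega}(S_n\smallsetminus S_{<n},<_n)$ considered in Remark~\ref{May12_12.0}. That remark says $<$ is an F-ordering of $M$ over $A$, and that it is a well-order since each $<_n$ is one.

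For $(2)\Rightarrow(3)$, begin with a well-founded F-ordering $<$ of $M$ over $A$. By Remark~\ref{May12_12.1} we may assume, without losing well-foundedness, that $A$ is an initial segment for $<$. I then claim that $<$ is an HF-ordering of $M$ (viewed as a partial STS) and that $A$ is exactly the set of points that are not a product of two strictly smaller elements. The first claim follows from the uniqueness clause in the definition of F-ordering: a block $\{a,a_1,a_2\}$ with $a_1,a_2<a$ yields a pair $\{a_1,a_2\}$ with $a=a_1\cdot a_2$ and both entries below $a$, and there can be at most one such pair. For the second claim: if $a\in A$ then the unique F-pair for $a$ is $\{a,a\}$ (this pair works, as $a=a\cdot a$ and $a\in A$), hence there is no block through $a$ whose other two vertices both lie below $a$, so $a$ is not a product of smaller elements; conversely, if $a\notin A$ the unique F-pair $\{a_1,a_2\}$ has $a_1\neq a_2$ (otherwise $a_1=a_2=a$, which is impossible since $a\notin A$ and $A$ is initial) and, $A$ being initial, $a_1,a_2<a$, so $a$ is a product of two smaller elements.

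The substantive implication is $(3)\Rightarrow(1)$. Fix a well-founded HF-ordering $<$ of $M$ whose non-products are exactly the elements of $A$. A $<$-induction gives $M=\langle A\rangle$: each $a\in M$ is either in $A$ or of the form $b\cdot c$ with $b,c<a$, and then $b,c\in\langle A\rangle$ by induction. To see that $A$ is independent, let $N$ be an arbitrary STS and $\varphi\colon A\to N$ a map, and define $\hat\varphi\colon M\to N$ by recursion along $<$: put $\hat\varphi(a)=\varphi(a)$ for $a\in A$, and $\hat\varphi(a)=\hat\varphi(b)\cdot\hat\varphi(c)$ for $a\notin A$, where $\{a,b,c\}$ is the unique block with $b,c<a$ (it exists because $a$ is a product of smaller elements, is unique because $<$ is an HF-ordering, and has $b\neq c$ automatically). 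The main obstacle is to verify that $\hat\varphi$ is a homomorphism, i.e. $\hat\varphi(x\cdot y)=\hat\varphi(x)\cdot\hat\varphi(y)$ for all $x,y\in M$. If $x=y$ this is idempotency in $N$. If $x\neq y$, then $x$, $y$ and $z=x\cdot y$ are pairwise distinct and $\{x,y,z\}$ is a block, and I argue by cases on which of the three is $<$-largest. If $z$ is largest, then $\{x,y,z\}$ is the block used to define $\hat\varphi(z)$, so $\hat\varphi(z)=\hat\varphi(x)\cdot\hat\varphi(y)$ and we are done. If $x$ is largest, then (by the block relations in $M$) $x=y\cdot z$ with $y,z<x$, so $\{x,y,z\}$ is the block used to define $\hat\varphi(x)$ and $\hat\varphi(x)=\hat\varphi(y)\cdot\hat\varphi(z)$; then $\hat\varphi(x)\cdot\hat\varphi(y)=(\hat\varphi(y)\cdot\hat\varphi(z))\cdot\hat\varphi(y)=\hat\varphi(z)$ using the Steiner quasigroup identities $u\cdot v=v\cdot u$ and $u\cdot(u\cdot v)=v$ in $N$, and this is $\hat\varphi(x\cdot y)$ as required; the case in which $y$ is largest is symmetric. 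Having extended an arbitrary $\varphi$ to a homomorphism, we conclude that $M=\langle A\rangle$ has the UMP over $A$ for the class of all STSs, so $A$ is a free base of $M$, closing the cycle.
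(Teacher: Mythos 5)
Your proof is correct. Note that the paper itself gives no argument for this proposition --- it defers to~\cite{barcas2} --- so there is no in-paper proof to compare against; your cycle $(1)\Rightarrow(2)\Rightarrow(3)\Rightarrow(1)$ is a complete, self-contained argument of the expected universal-algebraic kind. The steps you lean on are all sound: $(1)\Rightarrow(2)$ is exactly Remark~\ref{May12_12.0} after well-ordering each level; in $(2)\Rightarrow(3)$ you correctly use Remark~\ref{May12_12.1} to make $A$ an initial segment while preserving well-foundedness, and your identification of $A$ with the non-products matches the observation in the definition of F-ordering that for $a\in A$ the unique pair must be $\{a,a\}$ (your argument that for $a\notin A$ the unique pair consists of two distinct elements strictly below $a$, using that $A$ is initial, is the right one); and in $(3)\Rightarrow(1)$ the recursion along the well-founded order is legitimate, the defining block for $a\notin A$ exists and is unique precisely because $A$ is the set of non-products and $<$ is an HF-ordering, and your case analysis on which of $x,y,x\cdot y$ is $<$-largest, combined with the identities $u\cdot v=v\cdot u$ and $u\cdot(u\cdot v)=v$ in the target quasigroup, is exactly what is needed to verify that $\hat\varphi$ is a homomorphism. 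The only stylistic remark is that you could state explicitly that for $a\in A$ no block $\{a,b,c\}$ with $b,c<a$ exists (so the recursive definition cannot clash with the base case), but this is immediate from the hypothesis in $(3)$ and you do use it implicitly.
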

		\begin{proof}  See~\cite{barcas2}.
		\end{proof}

	Free Steiner triple systems satisfy a property which can be expressed through infinitely many first-order universal sentences.
	
				\begin{df} A partial STS $A\neq \emptyset$ is \textbf{unconfined}, or \textbf{contains no confined configuration}, if for all finite $A^\prime \subseteq A$ there is $a \in A^\prime$ that belongs to at most one block in $A^\prime$, that is, $a$ is the product of at most two elements of $A^\prime$.
		
	\end{df}
	
%	\newpage
	
	\begin{prop}\label{unconfined} \begin{enumerate}
			\item Free Steiner triple systems are unconfined.
%			\item If $A \subseteq M$ is a finite subset of the Steiner quasigroup $M$, then $A$ is contained in a finitely generated free quasigroup. {\color{red} move  later}
			\item Finitely generated unconfined STSs are free.
			\item A partial STS is unconfined if and only if it has an HF-ordering.
		\end{enumerate}
	\end{prop}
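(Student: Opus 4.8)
The plan is to prove (3) first, deduce (1) from it, and handle (2) separately via the canonical homomorphism out of a free STS. For the ``if'' direction of (3): if $<$ is an HF-ordering of the partial STS $A$ and $A'\subseteq A$ is finite and nonempty, let $a$ be the $<$-largest element of $A'$. Every block of $A'$ through $a$ has its two other vertices $<a$, and there is at most one such block since $<$ is an HF-ordering; hence $a$ lies in at most one block of $A'$, and $A$ is unconfined. This same observation yields (1): a free STS has, by Proposition~\ref{May12_2.2}, a well-founded HF-ordering, hence in particular an HF-ordering, hence is unconfined.

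For the ``only if'' direction of (3), assume $A$ is unconfined. I first treat $A$ finite, by induction on $|A|$. Pick $a\in A$ lying in at most one block of $A$ (unconfinedness applied to $A'=A$); then $A\smallsetminus\{a\}$ is again unconfined, as its finite subsets are finite subsets of $A$, so by induction it carries an HF-ordering, and putting $a$ on top of it gives an HF-ordering of $A$ --- the old elements keep the same collection of blocks below them, and $a$ has at most one block below, namely (if any) the unique block of $A$ through $a$. For arbitrary $A$ I would pass to propositional compactness: introduce variables $p_{ab}$ for distinct $a,b\in A$, read as ``$a<b$'', and let $\Gamma$ consist of $\neg(p_{ab}\wedge p_{ba})$, $p_{ab}\vee p_{ba}$, $p_{ab}\wedge p_{bc}\to p_{ac}$ for all admissible $a,b,c$, together with $\neg(p_{ba}\wedge p_{ca}\wedge p_{b'a}\wedge p_{c'a})$ for every pair of distinct blocks $\{a,b,c\}\neq\{a,b',c'\}$ of $A$ through a common vertex $a$. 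Satisfying assignments of $\Gamma$ correspond exactly to HF-orderings of $A$. Any finite $\Gamma_0\subseteq\Gamma$ involves only finitely many elements of $A$, say those in a finite set $A_0$; then $A_0$ is unconfined, so by the finite case it has an HF-ordering, and reading the truth values of the $p_{ab}$ off that ordering satisfies $\Gamma_0$, since every block named in $\Gamma_0$ is a block of $A_0$. By compactness $\Gamma$ is satisfiable, and this gives an HF-ordering of $A$.

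For (2), let $M$ be unconfined and finitely generated, fix a generating set $\{a_1,\ldots,a_k\}$ of $M$ of least possible size, let $F$ be the free STS with free base $\{x_1,\ldots,x_k\}$, and let $h\colon F\to M$ be the (surjective) homomorphism extending $x_i\mapsto a_i$. It suffices to show $h$ is injective, for then $M\cong F$ is free. If not, choose a pair $u\neq v$ in $F$ with $h(u)=h(v)$ for which the larger of the two levels of $u,v$ in the standard free construction of $F$ is least, and then the smaller is least; call the larger level $n$, so that $h$ is injective on $S_{<n}$ (note $n\geq 1$, as the $a_i$ are distinct by minimality of $k$), and say $u$ has level $n$, so by Proposition~\ref{May12_2} there is a unique pair $\{p,q\}\subseteq S_{<n}$ with $p\neq q$ and $u=p\cdot q$. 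Using the Steiner quasigroup identities, the injectivity of $h$ on $S_{<n}$, and the uniqueness of the block through two points, one then traces how the collapse propagates through the products formed from $p,q,v$ and extracts a finite subset of $M$ in which every vertex lies on at least two blocks --- a confined configuration, contradicting the unconfinedness of $M$. I expect this last step --- pinning down a concrete finite confined configuration forced by a minimal collapse of $h$ --- to be the main obstacle; the two maximum-element computations and the compactness bookkeeping in (3) are routine. (An essentially equivalent route is to show directly that a minimal generating set is a free base, the obstruction to which, by Proposition~\ref{May12_2}, would again yield a confined configuration.)
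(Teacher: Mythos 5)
Your parts (1) and (3) are correct. The ``if'' direction of (3) via the $<$-largest element of a finite subset, the finite case of the ``only if'' direction by induction (placing a point lying on at most one block on top), and the passage to arbitrary partial STSs by propositional compactness are all sound; the compactness step is in fact the same device the paper itself uses later (Remark~\ref{Feb_23_2024_6.1}), and deducing (1) from Proposition~\ref{May12_2.2} plus (3) is fine. Note that the paper does not prove Proposition~\ref{unconfined} internally --- it cites \cite{barcas2} --- so the only meaningful comparison is with correctness, and there (1) and (3) pass.

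Part (2), however, has a genuine gap, and you essentially flag it yourself. The entire content of the statement is concentrated in the step you defer: showing that a minimal collapse $h(u)=h(v)$, $u\neq v$, of the canonical surjection $h\colon F\to M$ forces a \emph{confined} configuration in $M$, i.e.\ a finite subset in which \emph{every} vertex lies on at least two blocks. Nothing in your sketch produces this: an identification a priori creates at most one point of $M$ lying on two blocks (and possibly none, if the blocks through $h(u)$ coming from $u$ and from $v$ coincide), and there is no argument that the ``propagation'' you invoke closes up into a finite set all of whose vertices have valency at least two --- that closure is exactly the combinatorial heart of the theorem, not a routine verification. There is also a structural worry about your minimality choice: taking a generating set of least cardinality only guarantees the $a_i$ are distinct; to make an induction on a ``least collapse'' work one typically needs a finer minimality (e.g.\ over all finite generating sets, minimize total level or term complexity, and verify conditions (a), (b) of Proposition~\ref{May12_2} directly for such a set, replacing a generator when a relation appears), since injectivity of $h$ on $S_{<n}$ alone does not control how products of collapsed elements distribute over higher levels. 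As written, (2) is a plausible strategy with its decisive step missing, so the proposal does not yet prove the proposition.
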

	\begin{proof} See~\cite{barcas2}.
	\end{proof}

\begin{df}	Let   $A\subseteq B$ be partial STSs. An HF-ordering $<$ of $B$ is \textbf{over} $A$ if  for every $a\in B\smallsetminus A$ there is at most one pair $\{a_1,a_2\}$ of distinct elements of $B$ such that $\{a,a_1,a_2\}$ is a block and  for every $i$, either $a_i<a$ or $a_i\in A$. 
	\end{df}
	
	Note that if $B$ is an STS, any F-ordering over $A\subseteq B$  is an HF-ordering over $A$, but even if an HF-ordering over $A$ is an F-ordering (over some subset of $B$), it is not necessarily an F-ordering over $A$, since elements of $A$ may be a product of smaller elements of $A$.
	
\begin{lemma}\label{order-over}
	Let $A\subseteq B$ be partial STSs. There is an HF-ordering of $B$ over $A$ if and only if there is an HF-ordering of $B$ where $A$ is an initial segment. In fact, given any HF-ordering $<$ of $B$ over $A$, the sum $(A,<\restriction A)+(B\smallsetminus A, <\restriction (B\smallsetminus A))$ is an HF-ordering of $B$ with initial segment $A$.
\end{lemma}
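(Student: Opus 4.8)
The statement contains two implications, and the second (``in fact'') sentence carries the weight, so the plan is to prove that sentence and the easy converse. For the direction that an HF-ordering $<'$ of $B$ with $A$ an initial segment is already an HF-ordering \emph{over} $A$: since $<'$ is an HF-ordering of $B$, it remains only to check the extra clause. Given $a\in B\smallsetminus A$, because $A$ is an initial segment and $a\notin A$, any element of $A$ is $<'$-below $a$, so the side condition ``$a_i<'a$ or $a_i\in A$'' reduces to ``$a_i<'a$''; hence the required uniqueness of a block $\{a,a_1,a_2\}$ with $a_1,a_2$ among such elements is exactly the defining property of an HF-ordering applied to the point $a$.

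For the main direction I would take the sum $\prec := (A,<\restriction A)+(B\smallsetminus A,<\restriction(B\smallsetminus A))$, which is a linear ordering of $B$ (the two pieces are disjoint) with $A$ as an initial segment by construction, and verify that it is an HF-ordering. The key step is to identify the set of $\prec$-predecessors of a point and split into two cases. If $a\in A$, then the $\prec$-predecessors of $a$ are precisely the $<$-predecessors of $a$ that lie in $A$, hence a subset of the set of all $<$-predecessors of $a$ in $B$; since $<$ is in particular an HF-ordering of $B$ (being an HF-ordering over $A$ presupposes this), at most one block $\{a,a_1,a_2\}$ has $a_1,a_2<a$, so a fortiori at most one such block has $a_1,a_2$ in the smaller set. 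If $a\in B\smallsetminus A$, then the $\prec$-predecessors of $a$ are exactly $A\cup\{x\in B\smallsetminus A: x<a\}$, i.e.\ exactly the $x\in B$ with $x<a$ or $x\in A$; thus a pair of distinct $\prec$-predecessors of $a$ forming a block with $a$ is precisely a pair of distinct $a_1,a_2\in B$ with $\{a,a_1,a_2\}$ a block and each $a_i<a$ or $a_i\in A$, and by the hypothesis that $<$ is an HF-ordering over $A$ there is at most one such pair. This shows $\prec$ is an HF-ordering with initial segment $A$, which yields the nontrivial implication of the equivalence as well as the final sentence.

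I do not expect a genuine obstacle: the whole argument is a careful unwinding of the two definitions. The one point to watch is that the definition of ``HF-ordering over $A$'' quantifies over arbitrary pairs of elements of $B$ subject to the side condition, not merely over $<$-predecessors, so one must verify, as above, that for $a\in B\smallsetminus A$ this class of pairs coincides exactly with the pairs of $\prec$-predecessors of $a$.
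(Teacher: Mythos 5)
Your proof is correct and follows essentially the same route as the paper: the converse direction is the same easy observation, and for the main direction the paper likewise argues that the sum can only fail at some $a\in B\smallsetminus A$ admitting two blocks with elements that are either $<$-smaller or in $A$, contradicting the over-$A$ hypothesis. Your write-up just makes explicit the case $a\in A$ (where the $\prec$-predecessors form a subset of the $<$-predecessors), which the paper leaves implicit.
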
	
\begin{proof}
	On the one hand, if $<$ is an HF-ordering of $B$ with initial segment $A$, it easy to see that it is an HF-ordering over $A$. On the other hand, assume $<$ is an HF-ordering of $B$ over $A$. Then the only way in which the sum can fail to be an HF-ordering is for some $a\in B \smallsetminus A$ to be a product of two different pairs of smaller elements, each in $B\smallsetminus A$ and smaller than $a$ in $<$, or in $A$, contradicting that $<$ is an HF-ordering over $A$.
\end{proof}
	
\begin{df} Given an HF-ordering  $<$ of the partial STS $B$  and a subset $A\subseteq B$, we define the $<$-\textbf{closure} of $A$ (in $B$) as the smallest subset $C$ of $B$ containing $A$ and such that  whenever  $b\in C$ and  $b_1,b_2 < b$ and $\{b,b_1,b_2\}$ is a block, then  $b_1,b_2\in C$.  We denote~$C$ by  $\cl_<(A)$.  
	\end{df}  
It is clear that the $<$-closure is a finitary closure operator on subsets of $B$. Moreover,
\[\cl_<(A_1\cup A_2)= \cl_<(A_1) \cup \cl_<(A_2) \,.\]

\begin{lemma}\label{Feb_23_2024_7} If $<$ is an HF-ordering of $B$  and $A=\cl_<(A)\subseteq B$, then  $<$ is an HF-ordering of $B$ over $A$. 
\end{lemma}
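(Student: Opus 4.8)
The plan is to reduce the statement to a claim about a single element and then invoke the defining property of the HF-ordering. Fix $a\in B\smallsetminus A$ and suppose $\{a,a_1,a_2\}$ is a block with $a_1\neq a_2$ such that, for each $i$, either $a_i<a$ or $a_i\in A$. I will show that in fact $a_1<a$ and $a_2<a$. Granting this, the lemma follows at once: any two pairs witnessing a failure of "HF-ordering over $A$" at the point $a$ would both be pairs of elements strictly below $a$ forming a block with $a$, and since $<$ is an HF-ordering of $B$ there is at most one such pair, so the two pairs coincide. As $a\in B\smallsetminus A$ was arbitrary, $<$ is an HF-ordering of $B$ over $A$.

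To prove the claim, let $m$ be the $<$-largest element of the three-element set $\{a,a_1,a_2\}$. If $m=a$ then $a_1,a_2<a$ and we are done. Otherwise $m\in\{a_1,a_2\}$; by symmetry assume $m=a_1$. Then $a<a_1$, so $a_1$ is not strictly below $a$, and the hypothesis on the pair forces $a_1\in A=\cl_<(A)$. Now $\{a_1,a,a_2\}$ is a block in which both $a$ and $a_2$ are strictly smaller than $a_1$ (because $a_1=m$ is the maximum of the block and the block has three distinct elements). The defining property of $\cl_<$ then gives $a,a_2\in\cl_<(A)=A$, contradicting $a\notin A$. This establishes the claim.

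The argument is short, and there is no genuine obstacle: the only point requiring a little care is the use of the fact that a block is a $3$-element set, so that the $<$-maximum of a block is well defined and lies strictly above the other two entries — this is exactly what lets the closure condition be triggered. The conceptual content was already placed in the preceding definition of $\cl_<$; this lemma merely records that the $<$-closed subsets of $B$ are precisely those over which the given HF-ordering is automatically an HF-ordering.
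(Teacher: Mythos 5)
Your proof is correct and takes essentially the same route as the paper: both arguments show that any pair witnessing the condition at a point $a\notin A$ must in fact lie strictly below $a$ (by observing that a pair element not below $a$ would be the block's maximum, hence in $A=\cl_<(A)$, and closure would force $a\in A$), and then uniqueness follows from $<$ being an HF-ordering of $B$. The only difference is organizational — you isolate the single-pair claim while the paper argues directly by contradiction with two pairs — which does not change the substance.
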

\begin{proof}   If not, there is some $b\in B\smallsetminus A$ and there are distinct $b_1,b_2,b_3,b_4$, such that  $\{b,b_1,b_2\}$,  $\{b,b_3,b_4\}$  are  blocks and, for each $i$, either $b_i < b$ or $b_i \in A$.   Without loss of generality,  $b_2 <b_1$. We claim that $b_1 < b$.  If not, then   $b_1\in A$ and $b <b_1$,  and since $A=\cl_<(A)$, we get $b \in A$, a contradiction. Hence,  $b>b_1>b_2$. By a similar argument, $b> b_3,b_4$.  But then $<$ is not an HF-ordering.
\end{proof}

\begin{lemma}\label{June_25_2024_1.1} Assume $<$ is an HF-ordering of $A$,  $B=\cl_<(B)\subseteq A$  and  $C\subseteq A$ is an initial segment  such that  $c<b$  whenever $c\in C$ and $b\in B$. Then we can interpolate $B$ between $C$ and the rest of the order of $A$ in such a way that we obtain an HF-ordering. In other words,   $$(C, <\restriction C) +(B, <\restriction B) + (A\smallsetminus (B\cup C), <\restriction A\smallsetminus(B\cup C))$$ is an HF-ordering of $A$. Moreover, we can change the ordering
 of $B$ by any other HF-ordering of $B$ and still have an HF-ordering of $A$.
\end{lemma}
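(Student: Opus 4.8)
The statement has two parts: first, that the "interpolated" ordering $(C,<\restriction C)+(B,<\restriction B)+(A\smallsetminus(B\cup C),<\restriction A\smallsetminus(B\cup C))$ is an HF-ordering of $A$; second, that $B$'s internal ordering may be replaced by any HF-ordering of $B$. My plan is to prove both at once by establishing the following general fact: if $<'$ is any linear ordering of $A$ obtained from $<$ by (i) keeping $C$ as an initial segment with its original order, (ii) placing $B$ as the next block of elements in \emph{any} HF-ordering of $B$, and (iii) appending $A\smallsetminus(B\cup C)$ with its original $<$-order at the end, then $<'$ is an HF-ordering. The first assertion is the special case where the chosen HF-ordering of $B$ is $<\restriction B$ (which is an HF-ordering of $B$ because any restriction of an HF-ordering to a subset is again an HF-ordering).

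\textbf{Key steps.} To verify that $<'$ is an HF-ordering, I must show: for each $a\in A$, there is at most one block $\{a,a_1,a_2\}$ with $a_1,a_2<'a$. I would split into cases according to where $a$ lies.
\emph{Case $a\in C$.} Then $a_1,a_2<'a$ forces $a_1,a_2\in C$, and on $C$ the orderings $<$ and $<'$ agree; since $<$ is an HF-ordering there is at most one such block. (Here one uses that $C$ is an initial segment of $<$, so no element of $B$ or $A\smallsetminus(B\cup C)$ is $<$-below an element of $C$.)
\emph{Case $a\in B$.} If $\{a,a_1,a_2\}$ is a block with $a_1,a_2<'a$, then $a_1,a_2\in C\cup B$. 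I claim in fact $a_1,a_2\in B$: since $B=\cl_<(B)$ and $\{a,a_1,a_2\}$ is a block with $a\in B$, the closure condition of $\cl_<$ in the ordering $<$ — together with the hypothesis $c<b$ for all $c\in C,b\in B$, which gives $a_1,a_2<a$ in $<$ whenever $a_i\in C$ — pulls $a_1,a_2$ into $B$. Wait: more carefully, if say $a_1\in C$ then $a_1<a$ in the original $<$ (by the interleaving hypothesis), so the $\cl_<$-closure clause applied to $a\in B$, with $b_1=a_1,b_2=a_2$ both $<a$ (the other one is $<'a$ hence in $C\cup B$, and if it were in $C$ it is $<a$, if in $B$ it could be $\geq a$ in $<$ — hmm). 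Let me instead argue: among $a_1,a_2$, at least the ones lying in $C$ are $<a$ in the original order; if \emph{both} $a_1,a_2$ are $<a$ in $<$ then $\cl_<(B)=B$ forces $a_1,a_2\in B$. If one of them, say $a_2$, is in $B$ with $a<a_2$ in $<$, then applying $\cl_<$-closure to $a_2$ (which lies in $B=\cl_<(B)$) together with... this is the delicate point; see below. Once I know $a_1,a_2\in B$, the chosen HF-ordering of $B$ guarantees at most one such block, since $<'$ restricted to $B$ is exactly that HF-ordering.
\emph{Case $a\in A\smallsetminus(B\cup C)$.} Then every element of $C\cup B$ is $<'a$, so the condition "$a_1,a_2<'a$" is weaker than "$a_1,a_2<a$" only for elements of $B$ that were originally $>a$. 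I must rule out a second block appearing. I would use that $<$ is an HF-ordering over $C$ in a suitable sense and that $B=\cl_<(B)$ is disjoint-above... Concretely: suppose $\{a,a_1,a_2\}$ is a block with $a_1,a_2<'a$. If some $a_i\in B$ with $a<a_i$ in $<$, then since $a_i\in B=\cl_<(B)$ and $\{a_i,a,a_j\}$ is a block with (potentially) $a,a_j<a_i$, closure would force $a\in B$, contradicting $a\notin B$. So all $a_i$ with $a<'a_i$... wait all $a_i$ satisfy $a_i<'a$; the point is that any $a_i\in C\cup B$ that is $<a$ in $<$ is unproblematic, and any $a_i\in B$ with $a_i>a$ in $<$ is impossible by the $\cl_<$ argument just given, and any $a_i\in A\smallsetminus(B\cup C)$ has $a_i<'a$ iff $a_i<a$. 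Hence $a_1,a_2<a$ in $<$, and HF-ness of $<$ finishes.

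\textbf{Main obstacle.} The crux — and the step I expect to need the most care — is the case $a\in B$ (and symmetrically controlling elements of $B$ that sit $<$-above $a$ in the other cases): I must show that a block $\{a,a_1,a_2\}$ with $a_1,a_2<'a$ cannot "escape" $B$, i.e. that both witnesses lie in $B$ so that the substituted HF-ordering of $B$ actually controls them. The tool is precisely $B=\cl_<(B)$: if a block meets $B$ in $\geq 2$ points and those two points are $<$ the third, the third is in $\cl_<(B)=B$; combined with the interleaving hypothesis ($C$ entirely below $B$) this forces the configuration to stay inside $B$ or to have its "top" element outside $B$ in a controlled way. I would organize this by first proving a short sublemma: \emph{if $\{x,y,z\}$ is a block with $x\in B$ and $y,z\in C\cup B$, then $y,z\in B$} — proved by noting $y,z<x$ in $<$ (elements of $C$ are below all of $B$; an element of $B$ that were $>x$ in $<$ would, via $\cl_<$ applied to it, drag in the others but that doesn't immediately contradict anything, so actually the cleanest route is: by HF-ness of $<$ restricted to $C\cup B$ and the fact that $C$ is an initial segment, at most one of $y,z$ can be $<$-above $x$; if exactly one, say $z>x>y$, then $z\in B=\cl_<(B)$ and $x,y<z$ gives nothing new, but now look at the block from $x$'s perspective... ) — I will present the sublemma's proof carefully, likely by a minimal-counterexample argument on the $<$-least element of $B$ witnessing a failure, which is where well-definedness of the argument really rests.
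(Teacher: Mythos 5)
Your plan — verify the HF-condition directly by cases on where $a$ lies, after reducing everything to the sublemma that a block through an element of $B$ whose other two vertices lie in $C\cup B$ must be contained in $B$ — is sound, and that sublemma is exactly the content the paper relies on: the paper's proof notes that $(C,<\restriction C)+(B,<\restriction B)=(B\cup C,<\restriction(B\cup C))$, that $\cl_<(B\cup C)=B\cup C$ (since $C$ is an initial segment and $B$ is $<$-closed), and then cites Lemma~\ref{Feb_23_2024_7} (together with Lemma~\ref{order-over}) for the first assertion, disposing of the ``moreover'' clause with precisely your sublemma, stated as ``no element of $B$ is a product of two elements of $C$, nor of an element of $C$ by an element of $B$''. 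So your route is a self-contained unpacking of the paper's citations rather than a genuinely different argument.

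The gap is that you never prove the sublemma, which is the crux, and your exploration of it goes astray at the decisive moment: in the configuration you label $z>x>y$ with $x,z\in B$, $y\in C$, you say closure ``gives nothing new'', but it finishes the proof — $\{z,x,y\}$ is a block with $x,y<z$ and $z\in B=\cl_<(B)$, so $x,y\in B$, hence $y\in B$, contradicting $y\in C$ (note $B\cap C=\emptyset$ because $c<b$ for all $c\in C$, $b\in B$). The whole sublemma is two lines: if $\{x,y,z\}$ is a block with $x\in B$, $y\in C$, $z\in C\cup B$, apply $B=\cl_<(B)$ to the $<$-largest vertex of the block; this vertex lies in $B$ (it is $x$ if $z\in C$, and $\max_<(x,z)$ if $z\in B$), and the other two vertices are $<$-smaller since all of $C$ lies $<$-below all of $B$, so closure forces $y\in B$, a contradiction. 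No minimal-counterexample induction is needed. The same device closes the loose end in your third case, where you wrote ``(potentially) $a,a_j<a_i$'': if $a_i\in B$ with $a<a_i$, apply closure to $\max_<(a_i,a_j)$ when $a_j\in B$, and to $a_i$ otherwise (then $a_j\in C$, or $a_j\notin B\cup C$ and $a_j<a<a_i$, so indeed $a,a_j<a_i$); in every sub-case closure forces $a\in B$, contradicting $a\notin B$. With these two points written out, your case analysis is correct and complete.
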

\begin{proof}  Notice that $(C, <\restriction C) +(B, <\restriction B)= (B\cup C,<\restriction (B\cup C))$. Since $\cl_<(B\cup C)=B\cup C$,  Lemma~\ref{Feb_23_2024_7}  ensures that $<$ is an HF-ordering of $A$  over $B\cup C$.  For the moreover clause,  note that no element of $B$ is a product of elements of $C$  nor a product of an element of $C$ by an element of $B$,
\end{proof}

\begin{lemma}\label{order_closure}
	If $<$ is an HF-ordering of $M$,  $A\subseteq M$  and $\cl_<(A)=A$, then  $\cl_<(\langle A\rangle )= \langle A \rangle$.
\end{lemma}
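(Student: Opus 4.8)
The plan is to reduce the statement to the assertion that each level $S_n$ of $\langle A\rangle$ over $A$ is $<$-closed, and to prove that by induction on $n$. Recall from Remark~\ref{May12_1} that $\langle A\rangle=\bigcup_{n<\omega}S_n$ with $S_0=A$ and $S_{n+1}=\{s\cdot t:s,t\in S_n\}$, the $S_n$ forming an increasing chain. If every $S_n$ satisfies $\cl_<(S_n)=S_n$, then $\langle A\rangle$ is an increasing union of $<$-closed sets, hence itself $<$-closed: any $b\in\langle A\rangle$ lies in some $S_n$, so any pair $\{b_1,b_2\}$ of smaller elements with $\{b,b_1,b_2\}$ a block already lies in $S_n\subseteq\langle A\rangle$. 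This is exactly $\cl_<(\langle A\rangle)=\langle A\rangle$. The base case $n=0$ is just the hypothesis $\cl_<(A)=A$.

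For the inductive step, assume $\cl_<(S_n)=S_n$ and take $c\in S_{n+1}$ with a block $\{c,c_1,c_2\}$ such that $c_1,c_2<c$; the goal is $c_1,c_2\in S_{n+1}$. If $c\in S_n$ this is the inductive hypothesis. So assume $c\in S_{n+1}\smallsetminus S_n$. Then $c=u\cdot v$ for some $u,v\in S_n$, and since $c\notin S_n$ a short quasigroup computation gives $u\neq v$ and $u,v\neq c$, so $\{c,u,v\}$ is a block. If $\{c_1,c_2\}=\{u,v\}$ we are done, since $u,v\in S_n\subseteq S_{n+1}$.

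The heart of the argument is to rule out the case $\{c_1,c_2\}\neq\{u,v\}$. Since two points lie in a unique block, $\{c_1,c_2\}$ and $\{u,v\}$ are then disjoint. Because $<$ is an HF-ordering and $c$ already has the $<$-smaller pair $\{c_1,c_2\}$, the pair $\{u,v\}$ cannot also consist of elements below $c$; as $u,v\neq c$, we may assume $v>c$. Rewriting $c=u\cdot v$ as $v=u\cdot c$ exhibits $\{v,u,c\}$ as a block with $c<v$ and $v\in S_n$. Whichever of $u,v$ is the $<$-largest element of this block lies in $S_n$ while the other two lie strictly below it, so $<$-closedness of $S_n$ forces all three --- in particular $c$ --- to lie in $S_n$, contradicting $c\notin S_n$. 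Hence this case cannot occur and the induction goes through.

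The only genuine difficulty is this last step: a priori an element $c$ of $\langle A\rangle$ could be produced as $u\cdot v$ inside $\langle A\rangle$ and yet acquire its true $<$-predecessor pair from outside. The resolution is that in that situation one of $u,v$ must lie $<$-\emph{above} $c$, and then invoking the inductive $<$-closedness of $S_n$ at that larger element drags $c$ back into $S_n$. Everything else --- the passage to the levels $S_n$, and the elementary facts about blocks of a Steiner quasigroup --- is routine.
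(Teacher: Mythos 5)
Your proof is correct and takes essentially the same route as the paper's: an induction on the levels of $\langle A\rangle$ over $A$, showing each level is $<$-closed. The key case analysis is the same as well --- either the generating pair $\{u,v\}$ lies below the new element, in which case the HF-ordering's uniqueness identifies it with the given smaller pair, or one generator lies above it, and then $<$-closedness of the previous level at the largest element of the block drags the new element back down, a contradiction.
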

\begin{proof}
	Let $\langle A\rangle = \bigcup_{n<\omega}A_n$, where $A_0=A$  and $A_{n+1}$ is the set of elements  of $\langle A\rangle\smallsetminus (A_0\cup \ldots \cup A_n)$  which are a product of two elements of $A_0\cup \ldots 	\cup A_n$.  It is enough to prove by induction that, for every $n$,   $\cl_<(A_0\cup \ldots \cup A_n)= A_0\cup\ldots\cup A_n$. This is clear for $n=0$. Assume $a\in A_{n+1}\smallsetminus (A_0\cup\ldots \cup A_n)$. There are $b,c\in A_0\cup \ldots \cup A_n$  such that $a=b\cdot c$. If $b>a$ or $c>a$, then $a\in \cl_<(A_0\cup \ldots \cup A_n)= A_0\cup \ldots \cup A_n$, a contradiction. Otherwise, $a>b,c$ and $\{b,c\}\subseteq A_0\cup\ldots\cup A_n$ is the unique pair of elements such that $a>b,c$ and $a=b\cdot c$.
\end{proof}

\section{Some remarks on predimension} \label{section4}

In \cite{hall}, a notion of \textit{rank} for projective planes is defined whose analogue for Steiner triple systems coincides with a natural choice for a predimension function.

\begin{df} \label{dfpredim} Let $A$ be a finite partial STS and let $\bl(A)$ be the set of blocks in $A$. Then the \textbf{rank}, or \textbf{predimension}, of $A$ is
\[ \delta(A)= |A|-|\bl(A)| \, .\]
\end{df}

\begin{rmk} It is well known that in a finite STS $M$ of cardinality $n$
\begin{itemize}
\item any point lies in $\frac{n-1}{2}$ blocks
\item there are $\frac{n(n-1)}{6}$ blocks.
\end{itemize}

It follows that 
\begin{enumerate}
\item an unconfined STS $M$ is infinite, or $|M|=3$
\item if $M$ is an STS of cardinality $\geq 3$ then $\delta(S)\leq 0$.
\end{enumerate}

For 1, let $M$ be a finite STS such that $|M| > 3$. Then $M$ is a partial STS contained in $M$ such that any $p \in M$ lies in $\frac{n-1}{2}$ blocks. Since $n>3$, $\frac{n-1}{2}>1$.

For 2, let $M$ be a finite SMS of cardinality $n \geq 3$. Then 
\[ \delta(M) = n - \dfrac{n(n-1)}{6} \]
and $\delta(M) \geq 0$ if and only if $0 \leq n \leq 7$. In fact, the predimension of the Fano plane (the unique STS of order 7) is 0, and the predimension of all other finite, non-trivial STS is negative.

\end{rmk}

\begin{lemma} The predimension of a finite partial STS $A$ that contains no confined configuration is non-negative. \end{lemma}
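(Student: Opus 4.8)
The plan is to prove the contrapositive via the predimension: I will show that any finite partial STS $A$ with $\delta(A) < 0$ must contain a confined configuration. Equivalently, using the characterisation of unconfinedness in Proposition~\ref{unconfined}(3), I would show that if $A$ has an HF-ordering then $\delta(A) \geq 0$. The HF-ordering route is the cleaner one, so I would take that: fix an HF-ordering $<$ of $A$, say with underlying set $\{a_1 < a_2 < \cdots < a_k\}$.

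The key idea is a counting argument that assigns each block to its $<$-maximal element. By definition of an HF-ordering, for every $a \in A$ there is \emph{at most one} block $\{a, a_1, a_2\}$ with $a_1, a_2 < a$; that is, at most one block has $a$ as its maximum. Hence the map sending a block $B \in \bl(A)$ to $\max_< B$ is injective, so $|\bl(A)| \leq |A|$, which already gives $\delta(A) \geq 0$ immediately. Wait — I should double-check that every block does have a well-defined maximum among its three elements, which it does since $<$ is a linear order on all of $A$ and every block is a subset of $A$; and that distinct blocks with the same maximum $a$ would violate the HF-condition at $a$, which is exactly what the definition forbids. So the injection is genuine and the bound $|\bl(A)| \le |A|$ follows.

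This essentially finishes it, but I would add one remark to connect the two formulations cleanly: by Proposition~\ref{unconfined}(3), "contains no confined configuration" is equivalent to "has an HF-ordering", so the hypothesis of the lemma lets me invoke the existence of $<$ directly. (If one preferred to avoid citing that proposition, one could instead argue directly: take a minimal counterexample $A$ with $\delta(A)<0$; since $A$ is unconfined there is $a \in A$ lying in at most one block; removing $a$ decreases $|A|$ by $1$ and $|\bl(A)|$ by at most $1$, so $\delta(A \smallsetminus \{a\}) \le \delta(A) < 0$, and $A \smallsetminus \{a\}$ is still unconfined, contradicting minimality once $A$ is small enough — but one must handle the base case, e.g. $|A| \le 2$ where $\delta \ge 0$ trivially. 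The HF-ordering argument sidesteps this bookkeeping entirely.)

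I do not anticipate a serious obstacle here; the only point requiring a moment's care is confirming that the "block $\mapsto$ maximum" map is well-defined and injective, i.e. that the HF-ordering condition is precisely the statement that no element is the maximum of two distinct blocks — which is immediate from unwinding the definition. Everything else is a one-line inequality.
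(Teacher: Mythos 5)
Your argument is correct. The paper itself does not give an in-text proof of this lemma (it defers to the companion paper \cite{barcas2}), so there is nothing internal to compare against; but your counting argument stands on its own: in an HF-ordering every block has a well-defined $<$-maximum, and two distinct blocks with the same maximum $a$ would give two distinct pairs of elements below $a$ whose product is $a$ (distinct pairs indeed, since in a partial STS two points determine at most one block), contradicting the HF-condition at $a$. Hence block $\mapsto$ $<$-maximum is injective, $|\bl(A)|\leq |A|$, and $\delta(A)\geq 0$. The only dependency is Proposition~\ref{unconfined}(3), which the paper also states with proof deferred to \cite{barcas2}; if you want the lemma fully self-contained you can either note that for finite $A$ the direction ``unconfined implies HF-orderable'' is the easy greedy induction (remove a point lying in at most one block, order the rest, place that point last), or run your fallback induction directly: removing a point in at most one block drops $|A|$ by $1$ and $|\bl(A)|$ by at most $1$, so $\delta(A)\geq \delta(A\smallsetminus\{a\})$, and the claim follows by induction on $|A|$ with trivial base case. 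Either way the proof is complete; the HF-ordering count is the cleaner formulation.
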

\begin{proof} See~\cite{barcas2}.
\end{proof}

\begin{rmk} The function $\delta$ is a {\bf predimension} in the following sense:
\begin{enumerate}
\item $\delta(\emptyset)=0$
\item $\delta(\{a\})\leq 1$  (in fact,  in our case $= 1$)
\item $\delta(A\cup B)+\delta(A\cap B)\leq  \delta(A)+\delta(B)$  if  $A,B$ are subsystems of a common partial STS $P$ ({\bf Submodularity} of $\delta$)\,.
\end{enumerate}
\end{rmk}

This predimension function can be used to define a notion of well-embedded, or strong, substructure in the class of partial STS.

\begin{df} If $A$ is a finite partial STS and $B\supseteq A$ is a partial STS (possibly infinite), we define
$$A\leq^* B \Leftrightarrow  \delta(A)\leq \delta(B_0) \mbox{ for every finite }  B_0 \mbox{ such that } A\subseteq B_0\subseteq B.$$
If $A\leq^* B$, we say that $A$  is {\bf well-embedded} in $B$, or that $A$ is a \textbf{strong substructure} of~$B$.
\end{df}

We compare the behaviour of the predimension $\delta$ in the class of finite unconfined partial STS and the class of \textit{oriented} partial STS  defined in \cite{evans} in the more general case of finite oriented colored $k$-hypergraphs that have a $\mathcal{G}$-orientation. The class defined below is the case $k=3, t=2$ and a single colour in Definition~1.3 in~\cite{evans}.

\begin{df} 
An {\bf orientation} of a partial STS $P$  is a one-to-one mapping $c:\bl(P)\rightarrow P$  such that  $c(e)\in e$ for every block $e$. We say that $c(e)$ is an {\bf apex} of $e$. Given an orientation $c$ of $P$, we say that $A\subseteq P$ is {\bf $c$-closed} in $P$ or {\bf closed with respect to} $c$  if for every block $e$ of $P$ with apex in $A$, $e\subseteq A$.

Let  $\mathcal{C}$ be the class of all finite partial STSs. Then 
\begin{itemize}
\item $\Cor =\{A\in\mathcal{C}\mid A \mbox{ can be oriented and for every } X\subseteq A \mbox{ with } |X|\leq 2,\; X\leq^\ast A\} \, $
\item $ \Cun=\{A\in\mathcal{C}\mid A \mbox{ is unconfined }\} \,. $
\end{itemize}
\end{df}

%{\color{red} To do: check if and  when to define $\leq^*$ in $\Cor$.}

We show that finite unconfined partial STS are a proper subclass of $\Cun$. 
\begin{prop} $\Cun\subseteq \Cor \, .$
\end{prop}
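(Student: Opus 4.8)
The goal is to show that every finite unconfined partial STS $A$ lies in $\Cor$, i.e.\ (i) $A$ admits an orientation $c$, and (ii) every subset $X \subseteq A$ with $|X| \le 2$ satisfies $X \le^\ast A$. The key tool should be the equivalence from Proposition~\ref{unconfined}(3): a partial STS is unconfined iff it has an HF-ordering. So I would start by fixing an HF-ordering $<$ of $A$, and use it to read off both an orientation and the predimension bound simultaneously.

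\textbf{Step 1: Build the orientation from an HF-ordering.} Given the HF-ordering $<$ of $A$, for each block $e = \{a,a_1,a_2\}$ let $c(e)$ be the $<$-largest element of $e$. I need $c$ to be injective: if two distinct blocks $e \ne e'$ had the same apex $a$, then $a$ would be the product of two distinct pairs of elements, each pair lying below $a$ in $<$, directly contradicting the defining property of an HF-ordering (at most one block $\{a,a_1,a_2\}$ with $a_1,a_2<a$). So $c$ is a well-defined orientation. This is the easy half.

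\textbf{Step 2: Prove $X \le^\ast A$ for $|X| \le 2$.} Here I must show $\delta(X) \le \delta(B_0)$ for every finite $B_0$ with $X \subseteq B_0 \subseteq A$. Since $\delta(\emptyset)=0$, $\delta(\{a\})=1$, and $\delta(\{a,b\})$ is $2$ or $1$ depending on whether $\{a,b\}$ spans a block (and a $2$-element set can't contain a block, so $\delta(X)=|X|\le 2$), the content is a lower bound $\delta(B_0) \ge |X|$. The natural approach: every finite subset $B_0$ of an unconfined partial STS is itself unconfined (being unconfined is inherited by subsets, since the defining condition quantifies over all finite subsets), hence by the Lemma stating that the predimension of a finite unconfined partial STS is non-negative, $\delta(B_0) \ge 0$. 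But I need $\ge |X|$, which is $0$, $1$, or $2$ — so for $|X|=0$ we are done, and for $|X| \ge 1$ I need a sharper count. I would argue by peeling off apices using the HF-ordering restricted to $B_0$ (which is still an HF-ordering of $B_0$): enumerate $B_0$ in increasing $<$-order as $b_1 < \dots < b_m$; each $b_i$ is the apex of at most one block contained in $\{b_1,\dots,b_i\}$, so adding $b_i$ increases $|A'|$ by $1$ and $|\bl(A')|$ by at most $1$, giving $\delta(\{b_1,\dots,b_i\}) \ge \delta(\{b_1,\dots,b_{i-1}\})$; moreover the first one or two elements $b_1$ (and $b_2$, which cannot yet be an apex of a block since a block needs three elements) contribute $+1$ each with no block, so $\delta(B_0) \ge 2 \ge \delta(X)$ when $m \ge 2$, and the small cases $m \le 2$ are immediate. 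One subtlety: I should make sure the two elements I use to get the lower bound of $2$ really do not span a block — but two elements never span a block, so $\delta(\{b_1,b_2\}) = 2$, and the peeling argument only ever adds, never subtracts.

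\textbf{Main obstacle.} The delicate point is not the orientation (Step~1 is essentially immediate from the definition of HF-ordering) but making the predimension inequality in Step~2 airtight for the boundary cases — in particular handling $B_0$ of size $0$ or $1$ versus $X$ of size $2$ (impossible, since $X \subseteq B_0$ forces $|B_0| \ge 2$), and confirming that the monotonicity $\delta(\{b_1,\dots,b_i\}) \ge \delta(\{b_1,\dots,b_{i-1}\})$ combined with the base case $\delta(\{b_1,b_2\}) = 2$ indeed yields $\delta(B_0) \ge 2$ for all relevant $B_0$. I should also double-check that restricting an HF-ordering of $A$ to a subset $B_0$ yields an HF-ordering of $B_0$ — this holds because the condition ``at most one block below'' only gets easier when passing to a substructure. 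Once these checks are in place, $A \in \Cor$, and since the class of finite unconfined partial STS is nonempty and there exist oriented partial STS that are confined (which can be noted separately, or deferred), the inclusion is proper.
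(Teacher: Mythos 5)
Your proof is correct, but it takes a somewhat different route from the paper's. The paper argues by induction on $|A|$ directly from the definition of unconfinedness: at each step it picks a point $a$ lying in at most one block, extends an orientation of $A\smallsetminus\{a\}$ by making $a$ the apex of its block, and establishes the bound $\delta(X)\geq \min\{|X|,2\}$ for all $X\subseteq A$ by tracking how $\delta$ changes when $a$ is removed from $X$. You instead invoke the equivalence in Proposition~\ref{unconfined}(3) to fix an HF-ordering $<$ of $A$ once and for all, define the orientation by sending each block to its $<$-largest element (injectivity is exactly the HF condition), and get the predimension bound by restricting $<$ to an arbitrary $B_0\supseteq X$ and observing that, along the increasing enumeration, each new point adds one vertex and at most one block, so $\delta$ is non-decreasing from the base value $\delta(\{b_1,b_2\})=2$. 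The two arguments are morally the same --- an HF-ordering is precisely a record of iterated peelings of points of low valency --- but yours outsources the combinatorial core to the quoted equivalence (whose proof is in the cited reference), which makes both halves shorter and makes explicit the useful fact that $\delta$ is monotone along $<$-initial segments of any subset, whereas the paper's induction is self-contained from the definition of unconfined. Your boundary checks (that two points never span a block, that $|X|\leq 2$ forces $\delta(X)=|X|$, that restrictions of HF-orderings are HF-orderings, and the small cases $|B_0|\leq 2$) are all sound; the closing remark about properness of the inclusion is not part of the statement being proved (the paper treats it in a separate example) and can simply be dropped.
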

\begin{proof}  We check by induction on $|A|$ that if $A$ is unconfined, then it has an orientation.  We may assume $A\neq \emptyset$. Let $a\in A$ belong to at most one block of $A$. By induction hypothesis, $A\smallsetminus \{a\}$ has an orientation $c$. If $a$ belongs to the block $e$, we make $a$ the apex of $e$. This gives an orientation of $A$ extending $c$.

Now we prove by induction on $A$ that if $A$ is unconfined and  $X\subseteq A$, and $|X|\leq 2$, then $X\leq^* A$.  It is enough to check that if $X\subseteq A$, then $\delta(X)\geq \min\{ |X|,2\}$.  We may assume $A\neq\emptyset$. Choose $a\in A$ belonging to at most one block of $A$ and let $A^\prime= A\smallsetminus\{a\}$. We can apply the induction hypothesis to $A^\prime$. If $a\not\in X$, then $X\subseteq A^\prime$ and the induction hypothesis gives the result. If $a\in X$, let  $X^\prime =X\smallsetminus\{a\}$. Then $\delta(X^\prime)\geq \min\{|X^\prime|,2\}$  and, moreover,  $\delta(X^\prime)=\delta(X)$ or  $\delta(X^\prime)=\delta(X)+1$.  If $|X|\leq 2$, then $\delta(X)=|X|$ and clearly $\delta(X)\geq \min\{|X|,2\}$.  If $|X|>2$, then $\min\{|X|,2\}= 2$  and $\delta(X)\geq \delta(X^\prime)+1\geq 2$.
\end{proof}
 The next example shows that $\Cun\neq \Cor$.
\begin{ex}  Let $A$ be the partial STS consisting of the vertices  $\{a_1,\ldots,a_9\}$  with horizontal  blocks  
\[ \{a_1,a_2,a_3\},\{a_4,a_5,a_6\},\{a_7,a_8,a_9\} \]
 and  vertical blocks 
 \[\{a_1,a_4,a_6\},\{a_2,a_5,a_8\},\{a_3,a_6,a_9\} \,. \]
 Then $A$ is not unconfined, but it can be oriented and it satisfies the additional condition on subsets $X$ with $|X|\leq 2$. Hence, $A\in\Cor\smallsetminus \Cun$.
\end{ex}

By Corollary~1.1 in \cite{evans}, the class ($\Cor, \leq^*)$ has the amalgamation property. However, the following example shows that $\leq^*$ does not behave equally well in $\Cun$, which renders the results in \cite{evans} not applicable in our case.

 \begin{ex}%[Counterexample to amalgamation of unconfined partial STS]
 \label{noamalgam}
 Consider two isomorphic partial STS 
 \begin{itemize}
 \item $B_1$ on the set $\{a,b,e,f,ab, ae, ef, a(ef), b(ae), f(b(ae)) \}$
 \item $B_2$ on the set $\{a^\prime, b^\prime, c,d,  a^\prime b^\prime, a^\prime c, cd, c(a^\prime b^\prime), d(a^\prime c), d(c(a^\prime b^\prime)) \}$
 \end{itemize}
 with blocks as in the figure below.
 
 Let $A_1 =\{a,b \} \subseteq B_1$ and $A_2 =\{a^\prime,b^\prime \} \subseteq B_2$. Then the map $ \alpha = \{ \langle a, a^\prime \rangle, \langle b, b^\prime \rangle \}$ is an isomorphism between $A_1$ and $A_2$.
 
% \hskip -45pt
 \begin{tikzpicture}[scale=0.25]
\tkzDefPoint[label=left:$e$](0,14){e}
\tkzDefPoint[label=below left:$a$](7,15){a}
\tkzDefPoint[label=left:$ae$](9,9){ae}
\fill (e) circle (4pt);
\fill (a) circle (4pt);
%  \fill (6.3,14.55) circle (2pt);
  \fill (ae) circle (4pt);
\tkzCircumCenter(e,a,ae)\tkzGetPoint{O}
\tkzDrawArc(O,ae)(e)

\tkzDefPoint[label=below left:$f$](0,0){f}
\tkzDefPoint[label=below:$f(b(ae))$](7,0){g}
\tkzDefPoint[label=below:$b(ae)$](14,0){h}
\fill (f) circle (4pt);
\fill (g) circle (4pt);
\fill (h) circle (4pt);
\draw (f)--(g)--(h);

\tkzDefPoint[label=left:$ef$](0,7){ef}
\fill (ef) circle (4pt);
\draw (f)--(ef)--(e);

\tkzDefPoint[label=right:$f(ae)$](5.5,5.5){fae};
\fill (fae) circle (4pt);
\draw (f)--(fae)--(ae);

\tkzDefPoint[label=right:$b$](16,6){b};
\fill (b) circle (4pt);
\tkzCircumCenter(ae,b,h)\tkzGetPoint{P}
\tkzDrawArc(P,h)(ae)

\tkzCircumCenter(ef,fae,g)\tkzGetPoint{Q}
\tkzDrawArc(Q,g)(ef)

\tkzDefPoint[label=above:$ab$](15,15){ab};
\fill (ab) circle (4pt);
\tkzCircumCenter(b,ab,a)\tkzGetPoint{R}
\tkzDrawArc(R,b)(a)

\draw [line width=35pt,line cap=round,opacity=0.1,rounded corners] (a.center) -- (b.center);
\node at (6,17.3) {$A_1$};

\draw (7,7) circle (14.5cm);
\node at (7,-8.5) {$B_1$};

%Second copy

\begin{scope}[xscale=-1,xshift=-45cm]
\tkzDefPoint[label=right:$c(a^\prime b^\prime)$](0,14){e}
\tkzDefPoint[label=below right:$a^\prime b^\prime$](7,15.5){a}
\tkzDefPoint[label=right:$c$](9,9){ae}
\fill (e) circle (4pt);
\fill (a) circle (4pt);
%  \fill (6.3,14.55) circle (2pt);
  \fill (ae) circle (4pt);
\tkzCircumCenter(e,a,ae)\tkzGetPoint{O}
\tkzDrawArc(O,ae)(e)

\tkzDefPoint[label=below right:$d$](0,0){f}
\tkzDefPoint[label=below:$(a^\prime c)d$](7,0){g}
\tkzDefPoint[label=below:$a^\prime c$](14,0){h}
\fill (f) circle (4pt);
\fill (g) circle (4pt);
\fill (h) circle (4pt);
\draw (f)--(g)--(h);

\tkzDefPoint[label=right:$d(c(a^\prime b^\prime))$](0,7){ef}
\fill (ef) circle (4pt);
\draw (f)--(ef)--(e);

\tkzDefPoint[label=right:$cd$](5.5,5.5){fae};
\fill (fae) circle (4pt);
\draw (f)--(fae)--(ae);

\tkzDefPoint[label=right:$a^\prime$](16.5,6){b};
\fill (b) circle (4pt);
\tkzCircumCenter(ae,b,h)\tkzGetPoint{P}
\tkzDrawArc(P,h)(ae)

\tkzCircumCenter(ef,fae,g)\tkzGetPoint{Q}
\tkzDrawArc(Q,g)(ef)

\tkzDefPoint[label=above:$b^\prime$](15,15){ab};
\fill (ab) circle (4pt);
\tkzCircumCenter(b,ab,a)\tkzGetPoint{R}
\tkzDrawArc(R,b)(a)

\draw [line width=37pt,line cap=round,opacity=0.1,rounded corners] (b.center) -- (ab.center);

\node at (18.8,12.4) {$A_2$};
\draw (7,7) circle (14.5cm);
\node at (7,-8.5) {$B_2$};
\end{scope}

\end{tikzpicture}

By inspection, we see that  %there is an orientation of $B_i$ in which $A_i $ is closed (it suffices not to use either of the points in $A_i$ as apices). By Proposition~\ref{july_11_4}, 
$A_i \leq^* B_i$ for $i=1,2$.

Amalgamating $B_1$ and $B_2$ over $A_2=\alpha(A_1)$ forces the identification $ab \mapsto a^\prime b^\prime$. The amalgam that results from this identification and no others is in $\Cor$ but not in $\Cun$, because all its vertices are in at least two blocks. It follows that $(\Cun, \leq^*)$ does not have the amalgamation property.

Moreover, in any other amalgam, for each vertex the valency (i.e. the number of blocks to which the vertex belongs) cannot be lower than the valency in the original partial STSs. It follows that any amalgam of $B_1$ and $B_2$ over $A_1 \cong A_2$ is not unconfined. Therefore this example shows, more generally, that the amalgamation property does not hold for the class $(\Cun, \subseteq)$, where $\subseteq$ is the substructure relation.
 \end{ex}

	\section{Amalgamation} \label{section5}
	
	Since, by Example~\ref{noamalgam}, the class $(\Cun, \subseteq)$ does not have the amalgamation property, we introduce a notion of strong substructure based on HF-orderings which makes $\Cun$ a generalised amalgamation class. This leads to the construction of a generic object for $\Cun$ (with respect to the notion of strong substructure) which turns out to be the free Steiner triple system on $\omega$ generators.
	
	\begin{df} For $A, B \in \Cun$,
$$A \leq B \  \Leftrightarrow \ A\subseteq B \mbox{ and there is an HF-ordering of $B$ where $A$ is an initial segment}. $$
When $A \leq B$, we say that $A$ is \textbf{strong} in $B$.
\end{df}

Since $A$ and $B$ are finite, the condition $A \leq B$ is first-order.

\begin{rmk}
	$\Cun$ is closed under isomorphism
	and $\leq$ is a partial order on $\Cun$ refining $\subseteq$ and satisfying  the following conditions:
	\begin{enumerate}
		\item If $A\leq B$ and $f:B\cong B^{\prime}$ is an isomorphism such
		that $f(A)=A^{\prime}$, then $A^{\prime}\leq B^{\prime}$.
		\item If $A\subseteq B\subseteq C$, $B\in \Cun$ and $A\leq C$, then $A\leq B$.
	\end{enumerate}
	This means that $(\Cun,\leq)$ is a \textit{smooth class} as defined in~\cite{kue-lask}.
\end{rmk}

The following are conditions C3 and  C4 in~\cite{wagner}.

\begin{lemma}\label{diamond}
	\begin{enumerate}
		\item $\emptyset \leq A$  for every $A\in \Cun$.
	\item Let $A,A_1,A_2,B\in \Cun$. If $A_1\subseteq B$ and $A_2\leq B$, then $A_1\cap A_2\leq B$.
	\end{enumerate}
\end{lemma}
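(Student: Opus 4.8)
The plan is to read off both clauses directly from the characterisation of $\leq$ in terms of HF-orderings, using the $<$-closure operator together with Lemmas~\ref{order-over} and~\ref{Feb_23_2024_7}. Clause (1) is immediate: any $A\in\Cun$ is unconfined, so by Proposition~\ref{unconfined}(3) it carries an HF-ordering, and $\emptyset$ is (vacuously) an initial segment of it; hence $\emptyset\leq A$.

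For clause (2) the conclusion to establish is $A_1\cap A_2\leq A_1$ --- condition C4 of~\cite{wagner}. (From $A_1\subseteq B$ alone one cannot strengthen this to $A_1\cap A_2\leq B$: take $B$ the partial STS on $\{c_1,c_2,c_3,c_4,x\}$ with blocks $\{x,c_1,c_2\}$ and $\{x,c_3,c_4\}$, and $A_2=B$, $A_1=\{c_1,c_2,c_3,c_4\}$; then $A_1\subseteq B$, $A_2\leq B$ and $A_1\cap A_2=A_1$, yet $x$ is a product of two elements of $A_1$ in two distinct ways, so no HF-ordering of $B$ has $A_1$ as an initial segment. When one additionally knows $A_1\leq B$, transitivity of $\leq$ lifts $A_1\cap A_2\leq A_1$ to $A_1\cap A_2\leq B$.) Observe first that $A_1\cap A_2$ is a substructure of $A_1\in\Cun$, hence unconfined, so the statement makes sense; and if $A_1\cap A_2=\emptyset$ we are already done by clause (1).

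To prove $A_1\cap A_2\leq A_1$: since $A_2\leq B$, fix an HF-ordering $<$ of $B$ in which $A_2$ is an initial segment, and consider the restriction $<\restriction A_1$. Because $A_1$ is a partial STS, any block of $A_1$ with two vertices below a third in $<\restriction A_1$ is a block of $B$ with two vertices $<$-below that third; hence $<\restriction A_1$ is an HF-ordering of $A_1$. The key step is to verify that $A_1\cap A_2$ is $(<\restriction A_1)$-closed: if $c\in A_1\cap A_2$ and $\{c,u,v\}$ is a block of $A_1$ with $u,v<c$, then $u,v\in A_1$; and since $A_2$ is an initial segment of $<$ with $c\in A_2$ and $u,v<c$, also $u,v\in A_2$, so $u,v\in A_1\cap A_2$. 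Thus $\cl_{<\restriction A_1}(A_1\cap A_2)=A_1\cap A_2$, so by Lemma~\ref{Feb_23_2024_7}, $<\restriction A_1$ is an HF-ordering of $A_1$ over $A_1\cap A_2$, and Lemma~\ref{order-over} then furnishes an HF-ordering of $A_1$ in which $A_1\cap A_2$ is an initial segment, i.e.\ $A_1\cap A_2\leq A_1$.

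The only delicate point is the closedness of $A_1\cap A_2$ under $<\restriction A_1$, and this is exactly where the hypothesis $A_2\leq B$ is used: being an initial segment of $<$ means being downward closed, so a vertex of $A_1\cap A_2$ lying above the other two vertices of one of its $A_1$-blocks pulls those vertices back into $A_2$, hence into $A_1\cap A_2$. Everything else is routine bookkeeping with the established behaviour of $\cl_<$ and of HF-orderings under restriction.
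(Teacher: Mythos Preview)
Your diagnosis of clause~(2) is correct: as printed, the conclusion $A_1\cap A_2\leq B$ is false, and your counterexample is valid. The paper's own one-line proof in fact establishes exactly what you prove, namely $A_1\cap A_2\leq A_1$: it says that ``the elements of $A_1$ inherit an HF-ordering with initial segment $A_1\cap A_2$'', which is the definition of $A_1\cap A_2\leq A_1$. The single application of the lemma later in the paper (in the proof of Lemma~\ref{sat3}) also uses only this conclusion. So the printed ``$\leq B$'' is a typo for ``$\leq A_1$'', and you have proved the intended statement.

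On the method: both you and the paper restrict the given HF-ordering $<$ of $B$ (with initial segment $A_2$) to $A_1$. The paper then simply observes that $A_1\cap A_2$ is an \emph{initial segment} of $<\restriction A_1$: if $a\in A_1\cap A_2$ and $a'\in A_1$ with $a'<a$, then $a'\in A_2$ because $A_2$ is downward closed in $B$, so $a'\in A_1\cap A_2$. This already gives $A_1\cap A_2\leq A_1$ by definition. Your route via $<$-closure plus Lemmas~\ref{Feb_23_2024_7} and~\ref{order-over} is correct but a detour: the argument you give for closedness actually shows the stronger initial-segment property (it never uses that $u,v$ lie in a block with $c$, only that $u,v<c$), so you can drop the appeal to those two lemmas and conclude directly.
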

\begin{proof} 1. By Proposition~\ref{unconfined},  since every $A\in \Cun$ has an HF-ordering.
	
	2. Given an HF-ordering of $B$ with initial segment $A_2$, the elements of $A_1$ inherit an HF-ordering with initial segment $A_1\cap A_2$.
\end{proof}

If $B$ is a partial unconfined STS, possibly infinite,   and $A \subseteq B$ is finite, we define
\[ A \leq B \ \Leftrightarrow \ A \leq A^\prime \mbox{ for every } A^\prime \in \Cun \mbox{ such that } A \subseteq A^\prime \subseteq B \, . \]

	 We shall see that 
$(\Cun, \leq)$ has the amalgamation property AP. This implies that there is a countable unconfined Steiner triple system $F$, the \textbf{generic model}, such that
\begin{itemize}
\item $F = \bigcup_{n < \omega} C_n$ where $C_i \in \Cun$ and $C_i \leq C_j$ if $i\leq j$
\item if $A \in \Cun$, then there is an embedding $f:A \rightarrow F$ such that $f(A)\leq F$
\item for all $A \leq B$ such that $A \leq F$, there is an embedding $f: B \to F$ such that $f(B) \leq F$ and $f|_A = \mathrm{id}_A$.
\end{itemize}
Moreover, up to isomorphism there is a unique countable model satisfying these three properties.

 The class $\Cun$ turns out to be an interesting example of a variant of $\leq$-amalgamation appearing in~\cite{wagner} which we define next in our context and which allows us to establish that our generic is not saturated.

\begin{df}  Let $A,B\in \Cun$ and $n<\omega$.   We write  $A\leq^n B$  if  $A \subseteq B$  and for every  $X\subseteq B\smallsetminus A$  such that  $|X|\leq n$, we have  $A\leq A\cup X$. 

Given  a partial unconfined STS $B$, possibly infinite, and $A\in \Cun$ such that   $A\subseteq B$, we write  $A\leq^n B$  if $A\leq^n B_0$ for every  finite $B_0\subseteq B$ . 

In either case we say that $A$ is \textbf{$n$-strong} in $B$. 
\end{df}

\begin{rmk}\label{strongformula}
There is a universal formula \, $\strong_n(x_1,\ldots,x_k)$ \, such that  $$B\models \strong_n(a_1,\ldots,a_k) \mbox{ if and only if } \{a_1,\ldots,a_k\}\leq^n B.$$ Note that $A\leq B$ if and only if $A\leq^n B$  for every $n$. Hence
$$\{a_1,\ldots,a_k\}\leq B \mbox{ if and only if  } B\models \strong_n(a_1,\ldots,a_k) \mbox{ for every } n \, .$$
\end{rmk}

\begin{lemma}\label{sat3}  If  $A\leq^n B$ and $B\leq^n C$, then $A\leq^n C$.
\end{lemma}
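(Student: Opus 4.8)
The plan is to dispose first of the case where $C$ is infinite by a routine reduction, and then, in the finite case, to take an arbitrary $X\subseteq C\smallsetminus A$ with $|X|\le n$, split it as $X_1=X\cap B$ and $X_2=X\smallsetminus B$, and stitch together the two HF-orderings that the hypotheses provide into a single HF-ordering of $A\cup X$ in which $A$ is an initial segment.

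For the reduction: since $B\leq^n C$ requires $B\in\Cun$, the set $B$ is finite, and $A\leq^n C$ unwinds to "$A\leq^n C_0$ for every finite $C_0\subseteq C$". As the condition $A\leq^n C_0$ only gets stronger when $C_0$ is enlarged, it suffices to verify it for finite $C_0$ with $A\cup B\subseteq C_0\subseteq C$; each such $C_0$ is a finite substructure of the unconfined $C$, hence unconfined, hence in $\Cun$, and $B\leq^n C_0$ holds by hypothesis. So I may assume $A,B,C\in\Cun$.

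In the finite case, fix $X\subseteq C\smallsetminus A$ with $|X|\le n$ and set $X_1=X\cap B\subseteq B\smallsetminus A$ and $X_2=X\smallsetminus B\subseteq C\smallsetminus B$, so that $A$, $X_1$, $X_2$ are pairwise disjoint and $|X_1|,|X_2|\le n$. From $A\leq^n B$ I obtain $A\leq A\cup X_1$, witnessed by an HF-ordering $<_1$ of $A\cup X_1$ with $A$ an initial segment; from $B\leq^n C$ I obtain $B\leq B\cup X_2$, witnessed by an HF-ordering $<_2$ of $B\cup X_2$ with $B$ an initial segment; and $A$ carries some HF-ordering $<_A$ by Proposition~\ref{unconfined}. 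On $D:=A\cup X$ I take the order
$$(A,<_A)\ +\ (X_1,\,<_1\!\restriction X_1)\ +\ (X_2,\,<_2\!\restriction X_2)$$
and show it is an HF-ordering of $D$ with $A$ an initial segment; this gives $A\leq A\cup X$, and since $X$ was arbitrary, $A\leq^n C$.

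The verification is point by point. For $d\in A$ the elements below $d$ are precisely those of $A$ below $d$ in $<_A$, and $<_A$ is an HF-ordering. For $d\in X_1$ the set of elements below $d$ in the new order is $A\cup\{x\in X_1:x<_1 d\}$, which is exactly the set of elements of $A\cup X_1$ below $d$ in $<_1$, so the bound on blocks through $d$ is inherited verbatim from $<_1$. For $d\in X_2$ the set of elements below $d$ is $A\cup X_1\cup\{x\in X_2:x<_2 d\}$; since $A\cup X_1\subseteq B$, this is contained in $B\cup\{x\in X_2:x<_2 d\}$, the set of elements of $B\cup X_2$ below $d$ in $<_2$, hence every block $\{d,d_1,d_2\}$ of $D$ with $d_1,d_2$ below $d$ in the new order is also a block of $B\cup X_2$ with $d_1,d_2<_2 d$, and $<_2$ admits at most one such pair --- here one uses that a three-element subset is a block of $D$ exactly when it is a block of any substructure of $C$ containing it. I expect this last case to be the only delicate point: it is precisely where the decomposition $X=X_1\sqcup X_2$ and the inclusion $A\cup X_1\subseteq B$ are both needed. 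Everything else is bookkeeping with sums of linear orders.
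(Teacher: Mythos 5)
Your proof is correct and follows essentially the same route as the paper: split $X$ into $X\cap B$ and $X\smallsetminus B$, use $A\leq^n B$ on the first part and $B\leq^n C$ on the second, and glue the corresponding HF-orderings with $A$ as an initial segment. The paper packages the gluing through Lemma~\ref{diamond} (restricting the ordering of $B\cup X$ to $A\cup X$) together with transitivity of $\leq$, whereas you verify the summed ordering by hand and also make explicit the reduction of an infinite $C$ to the finite case, but the substance is the same.
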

\begin{proof}  Assume $X\subseteq C \smallsetminus A$  and $|X|\leq n$. We check that  $A\leq A\cup X$. Since $B\leq^n C$, we have $B\leq B\cup X$. By Lemma~\ref{diamond},   $A\cup (X\cap B)\leq A\cup X$.
	Since $A\leq^n B$, we know that $A\leq A\cup (X\cap B)$.  Hence  $A\leq A\cup X$.
\end{proof}

\begin{lemma}\label{Feb_23_2024_1}\begin{enumerate}
			\item Assume $A\leq B$, $ b\in B\smallsetminus A$ and there is a block $\{a_1,a_2,b\}$  with  $a_1,a_2\in A$. Then  $Ab\leq B$.
			\item Assume $A\leq^n B$, $b\in B\smallsetminus A$ and there is a block $\{a_1,a_2,b\}$  with  $a_1,a_2\in A$. Then  $Ab\leq^{n-1} B$.
		\end{enumerate}
	\end{lemma}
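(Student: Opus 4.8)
The two items are very close variants, so I would handle them together, deriving item 1 as the case "$n=\infty$" of the argument for item 2, or simply proving item 1 first and then adapting. Let me outline item 1 first. We are given $A\leq B$, a point $b\in B\smallsetminus A$, and a block $\{a_1,a_2,b\}$ with $a_1,a_2\in A$. We must show $Ab\leq B$, i.e. for every finite $B_0$ with $Ab\subseteq B_0\subseteq B$ (if $B$ is infinite; if $B$ is finite just take $B_0=B$) there is an HF-ordering of $B_0$ with $Ab$ as an initial segment.

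The key idea is: take such a $B_0$; then $A\subseteq B_0$ and, since $A\leq B$, we have $A\leq B_0$, so there is an HF-ordering $<$ of $B_0$ in which $A$ is an initial segment. Now I want to insert $b$ immediately after $A$. The only way this could fail to be an HF-ordering is if, after the move, $b$ becomes a product of two distinct pairs of smaller elements. Before the move, in $<$, the elements below $b$ were exactly $A\cup\{$elements of $B_0\smallsetminus A$ that were $<b\}$. After moving $b$ to sit right above $A$, the elements below $b$ are exactly $A$ — a strictly smaller set. So $b$ has \emph{at most} as many "small pairs" as before, hence at most one; the condition for $b$ is fine. For any other element $c$, the set of elements below $c$ is unchanged or has lost only $b$ (if $c$ was above $b$ originally), so again the HF-condition for $c$ cannot be violated. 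Hence the reordered sequence $(A,<\restriction A)+(b)+(B_0\smallsetminus(A\cup\{b\}), <\restriction\cdots)$ is an HF-ordering of $B_0$ with initial segment $Ab$. Actually it is cleaner to phrase this via Lemma~\ref{order-over}: show $<$ is an HF-ordering of $B_0$ over $Ab$ — equivalently, that $Ab$ is $<$-closed, which holds because $A$ is $<$-closed (being an initial segment) and $b=a_1\cdot a_2$ with $a_1,a_2\in A\subseteq Ab$ adds nothing new below $b$ — and then apply Lemma~\ref{order-over} to get the ordering with $Ab$ as an initial segment. One should double-check that $Ab\in\Cun$, which is immediate since $Ab$ is a substructure of $B\in\Cun$ and subsystems of unconfined systems are unconfined.

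For item 2, we are given $A\leq^n B$ and want $Ab\leq^{n-1}B$. By definition (and Remark~\ref{strongformula}-style reasoning), it suffices to show: for every $X\subseteq B\smallsetminus (Ab)$ with $|X|\leq n-1$, we have $Ab\leq Ab\cup X$. Now $Ab\cup X = A\cup(\{b\}\cup X)$ and $|\{b\}\cup X|\leq n$, so since $A\leq^n B$ we get $A\leq A\cup\{b\}\cup X = Ab\cup X$. Now apply item 1 \emph{inside} the finite unconfined structure $Ab\cup X$: we have $A\leq Ab\cup X$, $b\in (Ab\cup X)\smallsetminus A$, and the block $\{a_1,a_2,b\}$ with $a_1,a_2\in A$ sits inside $Ab\cup X$, so item 1 yields $Ab\leq Ab\cup X$, as required. (If $B$ is infinite one threads this through finite $B_0$'s in the usual way, but the above is the heart of it.)

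**Main obstacle.** The only delicate point is verifying carefully that the "move $b$ to just above $A$" operation genuinely preserves the HF-ordering property — i.e. controlling what happens to elements that were originally \emph{below} $b$ and to elements originally \emph{above} $b$. The clean way to avoid an ad hoc case analysis is to recognise that $Ab=\cl_<(Ab)$ in the ordering witnessing $A\leq B_0$ (since $A$ is an initial segment hence $<$-closed, and adding $b$ keeps it $<$-closed because $b$'s unique small pair, if any, lies in $A$), so Lemma~\ref{Feb_23_2024_7} says $<$ is already an HF-ordering over $Ab$, and then Lemma~\ref{order-over} promotes this to an HF-ordering with $Ab$ as an initial segment. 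With this packaging the proof is short; everything else is routine.
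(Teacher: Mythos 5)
Your closure-based argument is correct and is essentially the paper's own proof: the paper likewise fixes an HF-ordering of $B$ with $A$ as an initial segment, observes that $\{a_1,a_2,b\}$ is the only block containing $b$ and smaller elements (which is exactly your claim that $Ab$ is $<$-closed), and then moves $b$ to be the least element of $B\smallsetminus A$, while your reduction of item 2 to item 1 applied inside $A\cup\{b\}\cup X$ is what the paper's ``similar proof'' amounts to. One caveat: in your first sketch the assertion that below-sets are ``unchanged or have lost only $b$'' is backwards --- elements originally between $A$ and $b$ \emph{gain} $b$ below them, and excluding a new small pair for such an element needs HF-ness at $b$ --- but this is precisely what your $\cl_<(Ab)=Ab$ computation together with Lemmas~\ref{Feb_23_2024_7} and~\ref{order-over} supplies, so the packaged version stands.
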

	\begin{proof} 1.  Start with an HF-ordering of $B$ with initial segment $A$. Then $\{a_1,a_2,b\}$ is the only block containing $b$ and elements  smaller than $b$ in the order.  We can make $b$ the least element in $B\smallsetminus A$ and obtain, again, an HF-ordering of $B$, now with $Ab$ as initial segment. 
		
		2. Similar proof.
	\end{proof}
	
\begin{df}
	We say that $(\Cun, \leq)$  has the \textbf{$n$-amalgamation property} if  for every $A\leq B$ in $\Cun$ there is some $m<\omega$  such that whenever $C\in \Cun$ and $A\leq^m C$, there are $D\in \Cun$ and embeddings $f:B\rightarrow D$ and $g:C\rightarrow D$   such that 
$$ f\restriction A= g\restriction A, \quad f(B)\leq^n D \, \mbox{ and } \, g(C)\leq D \, .$$
\end{df}

In~\cite{wagner} Wagner uses the term \textit{$\leq^\ast$-amalgamation} to mean that $n$-amalgamation holds for every $n$. The notion also appears in~\cite{baldwin-shi} with the name \textit{uniform amalgamation}. In~\cite{herwig}, Herwig proves that saturation of the generic is equivalent to $\leq^\ast$-amalgamation, but under the additional assumption that there is no infinite ascending chain of minimal pairs (condition C2 in~\cite{wagner}).  When the generic does not contain infinite ascending chains of minimal pairs, the saturation of the generic is in fact equivalent to the conjunction of $\leq^\ast$-amalgamation and C2. We show that C2 does not hold in our class $(\Cun,\leq)$, and so the generic is not saturated in our case.

\begin{df}
	A \textbf{minimal pair} is a pair $(A,B)$ of elements of $\Cun$ such that $A\subsetneq B$, $A\not\leq B$ and $A\leq A^\prime$ for every $A^\prime$ such that $A\subseteq A^\prime\subsetneq B$.
\end{df}

\begin{prop}
	$\Cun$ contains an infinite ascending chain of minimal pairs, that is, there is a sequence $((A_i,A_{i+1})\mid i<\omega)$ in $\Cun$  such that each $(A_i,A_{i+1})$ is a minimal pair. Hence condition C2 of~\cite{wagner} does not hold.
\end{prop}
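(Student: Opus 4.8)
The plan is to construct an explicit infinite sequence of partial STSs $A_0 \subsetneq A_1 \subsetneq A_2 \subsetneq \cdots$ in which each consecutive inclusion is a minimal pair, and then package the whole thing as an ascending chain. The guiding idea is the one already visible in Example~\ref{noamalgam}: a configuration becomes non-strong (hence $A_i \not\leq A_{i+1}$) precisely when adding finitely many new points forces some point to acquire a second block, destroying every HF-ordering with $A_i$ as an initial segment; but one keeps minimality by arranging that removing any single point of $A_{i+1}\smallsetminus A_i$ (equivalently, looking at any proper intermediate $A'$) restores the existence of such an HF-ordering. So the basic building block is a small ``confining gadget'' $G_i$ glued onto $A_i$: a handful of new vertices and blocks whose only effect is to close a single cycle of forced products, so that $A_i \leq A'$ for all $A_i \subseteq A' \subsetneq A_i \cup G_i = A_{i+1}$ but $A_i \not\leq A_{i+1}$.

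The key steps, in order, are as follows. First I would describe the gadget concretely — most naturally a ``triangle/near-pencil'' type configuration on a bounded number of new points (say between three and six new vertices per step, with a correspondingly small number of new blocks), two of whose vertices are identified with two existing points of $A_i$, chosen so that $A_{i+1}$ is still unconfined (so that $A_{i+1}\in\Cun$) but so that in $A_{i+1}$ every HF-ordering extending a given HF-ordering of $A_i$ with $A_i$ initial would require one of the new points to sit above two distinct blocks. Second, I would verify $A_i \not\leq A_{i+1}$ by arguing directly from the definition: any HF-ordering of $A_{i+1}$ with $A_i$ an initial segment would, by tracing the forced $\cdot$-relations in the gadget, give two blocks $\{b,x_1,x_2\}$, $\{b,x_3,x_4\}$ on the last new point $b$ with all of $x_1,\dots,x_4$ smaller than $b$ — impossible. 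Third, I would verify minimality: for any $A'$ with $A_i \subseteq A' \subsetneq A_{i+1}$, at least one new point of the gadget is missing from $A'$, which breaks the forced cycle, so I can exhibit an HF-ordering of $A'$ with $A_i$ initial — here Lemma~\ref{order-over} and Lemma~\ref{Feb_23_2024_7} are the natural tools, since it suffices to show $A_i$ is $\cl_<$-closed in $A'$ for a suitable $<$. Finally, I would take disjoint copies of the gadgets sharing successive ``anchor pairs'' so that the $A_i$ genuinely nest, set $A_{i+1} = A_i \cup G_i$, check each $A_i \in \Cun$ (every finite subconfiguration still has a point of valency $\le 1$, because the gadgets are glued only along pairs and each individually is unconfined), and observe that $((A_i,A_{i+1}) \mid i<\omega)$ is the desired ascending chain of minimal pairs; the failure of C2 of~\cite{wagner} is then immediate from the definition.

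The main obstacle I expect is the simultaneous tuning of the gadget so that all three requirements hold at once: it must be unconfined as a stand-alone configuration and remain unconfined after gluing (so the $A_i$ lie in $\Cun$), it must fail strongness over $A_i$ (which pushes toward ``just barely closing a cycle''), and it must be \emph{minimally} non-strong (which forbids any redundant blocks that would make a proper subconfiguration also non-strong). Getting the incidence pattern exactly right — and then proving the ``no HF-ordering with $A_i$ initial'' claim cleanly rather than by ad hoc case analysis — is the delicate part; I would aim to phrase the forced-product argument in terms of $\cl_<$ so that Lemma~\ref{Feb_23_2024_7} does most of the bookkeeping. A secondary point to be careful about is the indexing/disjointness in forming the chain: the anchor pairs for consecutive gadgets must overlap correctly so that $A_i$ is literally an initial segment (not merely isomorphic to one) of $A_{i+1}$, and one should check that no two gadgets accidentally create a new block between them.
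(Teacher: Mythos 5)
Your overall strategy -- build an ascending chain by repeatedly gluing a small ``confining gadget'' onto $A_i$ so that $A_i\not\leq A_{i+1}$ while every proper intermediate extension is strong -- is the right shape of argument, and it is essentially what the paper does. But as written there is a genuine gap: the gadget, which is the entire mathematical content of the proposition, is never exhibited. You describe its desired properties (unconfined after gluing, closes exactly one ``cycle'' of forced products, minimally non-strong, iterable without accidental blocks between gadgets) and then explicitly defer ``getting the incidence pattern exactly right'' as the delicate part. That is precisely the step a proof must supply; without a concrete configuration and the verification that all three constraints hold simultaneously, nothing has been proved. Two further details of your plan are shaky: you propose anchoring each gadget at only \emph{two} points of $A_i$, which is unverified (the natural minimal gadget uses three anchors), and you would still owe an argument that the glued union stays unconfined and that distinct gadgets create no unintended blocks.

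The paper fills exactly this gap by working inside a free STS rather than building an abstract gadget. Take $F_3$ freely generated by $a_1,a_2,a_3$ and set $b=(a_1\cdot a_2)\cdot(a_1\cdot a_3)$; then $A=\{b,a_2,a_3\}$ and $B=\{b,a_1,a_2,a_3,a_1\cdot a_2,a_1\cdot a_3\}$ form a minimal pair: each of the three new points $a_1, a_1\cdot a_2, a_1\cdot a_3$ lies in two blocks of $B$ whose other elements are smaller-or-in-$A$ under any candidate ordering, so no HF-ordering over $A$ exists, while dropping any one new point restores one. The gadget is thus three new points and three blocks anchored at \emph{three} points of $A$. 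Iteration uses the non-surjective self-embedding of $F_3$ from~\cite{barcas2}: given $A_n$ with distinguished point $b_n$, adjoin $b_{n+1}, a_2\cdot b_{n+1}, a_3\cdot b_{n+1}$ with $b_n=(b_{n+1}\cdot a_2)\cdot(b_{n+1}\cdot a_3)$. Because every $A_n$ is a subset of a free (hence unconfined) STS, membership in $\Cun$ and the absence of spurious blocks come for free, which is exactly the bookkeeping your disjoint-copies plan would have to do by hand. If you want to salvage your write-up, the shortest route is to adopt this concrete gadget and check the minimal-pair conditions directly.
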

\begin{proof} Let $F_3$ be the free STS with three generators $a_1,a_2,a_3$, and let $b=(a_1\cdot a_2)\cdot (a_1\cdot a_3)$. As shown in~\cite{barcas2}, the homomorphism $f:F_3\rightarrow F_3$  induced by $f(a_1)=b$, $f(a_2)=a_2$ and $f(a_3)=a_3$ is an embedding and it is not surjective. It is easy to see that if $A=\{b,a_2,a_3\}$ and $B=\{b,a_1,a_2,a_3, a_1\cdot a_2, a_1\cdot a_3\}$, then $(A,B)$ is a minimal pair. This construction can be iterated as follows.  Start with $N_0$ freely generated by $A_0=\{a_1,a_2,a_3\}$ and let $b_0= a_1$. Assume $N_n$ is freely generated by $\{b_n,a_2,a_3\}$  and find a proper extension $N_{n+1}$  freely generated by $\{b_{n+1},a_2,a_3\}$, where $b_n= (b_{n+1}\cdot a_2)\cdot(b_{n+1} \cdot a_3)$. Let $A_{n+1}= A_n\cup\{b_{n+1},a_2\cdot b_{n+1}, a_3\cdot b_{n+1}\}$. Then $(A_n,A_{n+1})$ is a minimal pair.
	
\end{proof}

	\begin{prop}\label{Feb_23_2024_2} $(\Cun,\leq)$ has the $n$-amalgamation property for every $n$.  In fact, if $A\leq B$ and $n<\omega$ are given, we can always take $m= n+|B\smallsetminus A|$.
	\end{prop}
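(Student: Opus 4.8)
The plan is to prove, by induction on $|B \smallsetminus A|$, that if $A \leq B$ in $\Cun$ and $n < \omega$, then taking $m = n + |B\smallsetminus A|$ witnesses the $n$-amalgamation property for the pair $(A,B)$. The base case $B = A$ is trivial: given $C \in \Cun$ with $A \leq^m C$, set $D = C$, let $g$ be the identity and $f$ the inclusion of $B = A$ into $D$; then $f(B) = A \leq^m C = D$, and since $m \geq n$ we get $f(B)\leq^n D$ by (the easy direction of) Lemma~\ref{sat3} or directly from the definition of $\leq^n$, while $g(C) = C \leq C = D$ trivially.

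For the inductive step, suppose $A \subsetneq B$. The idea is to peel off one element of $B \smallsetminus A$ by adding it to $A$, reducing the size of the difference. Since $A \leq B$, fix an HF-ordering of $B$ with initial segment $A$, and let $b$ be the least element of $B \smallsetminus A$ in that ordering. Then $b$ lies in at most one block $\{b,a_1,a_2\}$ with $a_1,a_2$ below $b$, and these $a_1,a_2$ must lie in $A$; thus $A' := A \cup \{b\}$ satisfies $A \leq A' \leq B$ (the first inequality because $A'$ is an initial segment of the chosen ordering, the second by Lemma~\ref{diamond}(2) applied to $A' \subseteq B$ and $B \leq B$, or again directly). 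Now $|B \smallsetminus A'| = |B \smallsetminus A| - 1$, so by induction hypothesis the pair $(A', B)$ admits $m' = n + |B \smallsetminus A'| = m - 1$ as a witness for $n$-amalgamation. The task is then: given $C \in \Cun$ with $A \leq^m C$, first enlarge $C$ to some $C'$ with $A' \leq^{m'} C'$ and $C \leq C'$, and $A'$ mapped compatibly; then apply the inductive hypothesis to $(A', B)$ and $C'$ to get $D$ with $f(B) \leq^n D$ and $g'(C') \leq D$; finally compose $C \leq C' \leq D$ (using transitivity of $\leq$, which follows from Lemma~\ref{sat3} since $\leq$ is $\leq^k$ for all $k$) to conclude $g(C) \leq D$.

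The heart of the argument — and the main obstacle — is the passage from $A \leq^m C$ to a suitable $C'$ with $A' \leq^{m'} C'$: we must freely adjoin to $C$ a new vertex playing the role of $b$, together with the block $\{b, a_1, a_2\}$ (where $a_1, a_2 \in A \subseteq C$), obtaining $C' = C \cup \{b\}$ in $\Cun$ with $C \leq C'$. One checks $C'$ is unconfined (extend an HF-ordering of $C$ by putting $b$ last; $b$ lies in the single block $\{b,a_1,a_2\}$) and that $C \leq C'$. For $A' \leq^{m'} C'$: given $X \subseteq C' \smallsetminus A'$ with $|X| \leq m' = m-1$, we must show $A' \leq A' \cup X$. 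If $b \notin X$ then $X \subseteq C \smallsetminus A$ with $|X \cup \{b\}| \leq m$, so $A \leq A \cup X \cup \{b\} \supseteq A' \cup X$, and then $A' = A \cup \{b\} \leq A \cup X \cup \{b\}$ by Lemma~\ref{Feb_23_2024_1}(1) — wait, more carefully, $A \leq A\cup\{b\}\cup X$ gives, via Lemma~\ref{diamond}(2), that $A' \leq A'\cup X$ since $A' \subseteq A'\cup X \subseteq A\cup\{b\}\cup X$ and $A' \in \Cun$. If $b \in X$, write $X = X_0 \cup \{b\}$ with $X_0 \subseteq C \smallsetminus A$, $|X_0| \leq m - 2$; one shows $A' \cup X = (A\cup\{b\})\cup X_0$ is unconfined and applies Lemma~\ref{Feb_23_2024_1}(1) to $A\cup X_0 \leq A\cup X_0\cup\{b\}$ together with $A \leq A\cup X_0\cup\{b\}$ (valid as $|X_0 \cup \{b\}| \leq m-1 \leq m$) to get $A' \leq A'\cup X$. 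Careful bookkeeping with these lemmas, rather than any deep new idea, is what the step requires; the constant $m = n + |B\smallsetminus A|$ falls out of counting how many times we peel off a vertex.
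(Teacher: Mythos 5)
There is a genuine gap, and it sits exactly at the point you flag as ``the heart of the argument'': the claim that you can always \emph{freely} adjoin to $C$ a new vertex $b$ together with the block $\{b,a_1,a_2\}$ with $a_1,a_2\in A$. This is not always possible: $C$ may already contain a block $\{a_1,a_2,c\}$ with $c\in C\smallsetminus A$, i.e.\ the product $a_1\cdot a_2$ may already be defined in $C$. In that case $C'=C\cup\{b\}$ with the extra block is not even a partial STS (the pair $a_1,a_2$ would lie in two blocks), so it is not in $\Cun$ and the eventual amalgam would not be a partial STS either. The only option is to \emph{identify} $b$ with the existing element $c$, and this identification is what the whole proposition is really about: after identifying, one only knows $A\cup\{c\}\leq^{m-1}C$ (Lemma~\ref{Feb_23_2024_1}(2)), so each forced identification burns one unit of the $\leq^m$ strength of $C$; since at most $|B\smallsetminus A|$ identifications can be forced (and they can cascade, because once $b$ is identified with $c$, products of $b$ with elements of $A$ may in turn already exist in $C$), this is precisely where the bound $m=n+|B\smallsetminus A|$ comes from. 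Your write-up never uses Lemma~\ref{Feb_23_2024_1}(2) and never confronts this case, which is why your bookkeeping seems to go through so smoothly.

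The paper's proof makes this case the first phase: as long as some $b\in B\smallsetminus A$ and $c\in C\smallsetminus A$ are products of the same pair $a_1,a_2\in A$, it replaces $b$ by $c$ in $B$ (getting $B'\cong_A B$ with $A'=A\cup\{c\}\leq B'$ and $A'\leq^{m-1}C$ by Lemma~\ref{Feb_23_2024_1}), and iterates until no such coincidence remains; only then does it take the free amalgam $D=B^\ast\cup C$ over the enlarged intersection $A^\ast$, and checks $C\leq D$ and $B^\ast\leq^n D$ by gluing HF-orderings much as you do in your verification of $A'\leq^{m'}C'$. Your ``peel off one element of $B\smallsetminus A$ and freely extend $C$'' induction could be repaired by adding, at each step, the alternative in which the parents' product already exists in $C\smallsetminus A$ (identify instead of adjoin, and pay with Lemma~\ref{Feb_23_2024_1}(2)); but as it stands the missing case is the essential content of the proposition, not a bookkeeping detail.
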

	\begin{proof} Assume $A\leq B$  and  $A\leq^m C$, where $m= n+|B\smallsetminus A|$. We want to find $B^\ast\cong_A B$ and $D\in \Cun$  such that  $C\leq D$ and $B^\ast\leq^n D$.  We may assume $C\cap B=A$. Note that the blocks containing only elements of $A$ are the same in $B$ and in $C$.  Assume there are $b\in B\smallsetminus A$, $c\in C\smallsetminus A$ and  $a_1,a_2\in A$  such that $\{a_1,a_2,b\}$ is a block of $B$ and  $\{a_1,a_2,c\}$ is a block of $C$. Redefine $A^\prime= A\cup\{c\}$,  $B^\prime = (B\smallsetminus \{b\})\cup \{c\}$ and modify the blocks of $B$ by exchanging $b$ with $c$  to obtain the blocks of $B^\prime$. Then $B^\prime\cong_A B$ and $B^\prime\cap C=A^\prime$.  By Lemma~\ref{Feb_23_2024_1}, we have  $A^\prime\leq B^\prime$ and $A^\prime\leq^{m-1} C$. Iterating this process gives $B^\ast\cong_A B$  such that if $A^\ast =B^\ast \cap C$, then   $A^\ast \leq B^\ast$,  $A^\ast\leq^{n +i}C$  for some $i \geq 0$  and there are no $b\in B^\ast\smallsetminus A^\ast$, $c\in C\smallsetminus A^\ast$ and $a_1,a_2\in A^\ast$ such that  $\{a_1,a_2,b\}$ is a block of $B^\ast$  and  $\{a_1,a_2,c\}$ is a block of $C$.  Now consider $D=B^\ast\cup C$ as a free amalgam of $B^\ast$ and $C$ over $A^\ast$, that is,  with the blocks in $B^\ast$ and in $C$ and no others.  Then $D$ is a partial STS. We claim that $D\in \Cun$, $C\leq D$ and $B^\ast\leq^n D$.
		
First we show that $D$ has an HF-ordering where $C$ is an initial segment. This proves that $D\in\Cun$ and $C\leq D$.  Take an HF-ordering $<_C$ of $C$ and an HF-ordering $<_{B^\ast}$ of  $B^\ast=A^\ast\cup (B^\ast\smallsetminus C)$ where $A^\ast$ is an initial segment. Then the sum 
$$(C,<_C)+(B^\ast\smallsetminus C,<_{B^\ast}\restriction (B^\ast\smallsetminus C))$$ 
is an HF-ordering of $D$. If not, there is an element of $B^\ast\smallsetminus C$ that belongs to different blocks containing smaller elements, but these elements then are in $A^\ast\cup (B^\ast\smallsetminus C)$, contradicting the choice of the ordering $<_{B^\ast}$ of $B^\ast$ as an HF-ordering over $A^\ast$.
		
		Finally, we check that $B^\ast\leq^n D$.  Let $X\subseteq D\smallsetminus B^\ast = C\smallsetminus A^\ast$  with $|X|\leq n$; we show that $B^\ast\leq B^\ast \cup X$.  Since  $A^\ast\leq B^\ast$, we have an HF-ordering $<_{B^\ast}$ of $B^\ast$ with initial segment $A^\ast$.  Since $A^\ast\leq^n C$, we know that $A^\ast\leq A^\ast\cup X$, and hence there is an HF-ordering $<_{A^\ast \cup X}$ of $A^\ast\cup  X$ with initial segment $A^\ast$. We combine these orderings by taking the sum $(B^\ast,<_{B^\ast})+(X,<_{A^\ast\cup X}\restriction X)$. Since an element of $X$ cannot have a block containing elements of $B^\ast$ unless the elements belong to $A^\ast$, we obtain an HF-ordering of $B^\ast\cup X$ with initial segment $B^\ast$.
	\end{proof}

    \begin{prop}\label{amalgamation} $(\Cun,\leq)$ has the amalgamation property: if $A,B,C\in\Cun$, $A\leq B$ and $A\leq C$, there are $D\in\Cun$ and embeddings $f:B\rightarrow D$,  $g:B\rightarrow D$  such that  
    $$f\restriction A= g\restriction A, \quad f(B)\leq D \, \mbox{ and } \,  g(C)\leq D \, .$$
	\end{prop}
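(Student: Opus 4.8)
The strategy is to derive full amalgamation from the $n$-amalgamation property (Proposition~\ref{Feb_23_2024_2}) together with the chain description of the generic. First I would recall that, by Proposition~\ref{Feb_23_2024_2}, given $A\leq B$ there is $m<\omega$ (namely $m=|B\smallsetminus A|$, taking $n=0$ there) such that whenever $A\leq^m C$ there is a completion $D_0\in\Cun$ with embeddings of $B$ and $C$ over $A$ and $g(C)\leq D_0$. The obstacle is that the hypothesis here is only $A\leq C$, which gives $A\leq^m C$ for \emph{every} $m$, so in fact Proposition~\ref{Feb_23_2024_2} applies directly once we check that $A\leq C$ implies $A\leq^m C$. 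This last implication is immediate from Remark~\ref{strongformula}: $A\leq C$ holds iff $A\leq^n C$ for every $n$, and in particular for $n=m$.

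So the core of the argument is: given $A\leq B$ and $A\leq C$ in $\Cun$, put $m=|B\smallsetminus A|$ and apply Proposition~\ref{Feb_23_2024_2} with $n=0$. Since $A\leq C$ we have $A\leq^m C$, so we obtain $D\in\Cun$ and embeddings $f\colon B\to D$, $g\colon C\to D$ with $f\restriction A=g\restriction A$, $f(B)\leq^0 D$ and $g(C)\leq D$. The condition $f(B)\leq^0 D$ says nothing, so I would instead run Proposition~\ref{Feb_23_2024_2} more carefully to get $f(B)\leq D$ as well. The trick is symmetry combined with a second application: having amalgamated to get $g(C)\leq D$, one notes that inside $D$ we have $f(A)=g(A)\leq g(C)\leq D$, hence $f(A)\leq D$; but that does not yet give $f(B)\leq D$.

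The cleaner route, which I would actually take, is the standard observation that $n$-amalgamation for \emph{all} $n$ yields full amalgamation in a smooth class: given $A\leq B$ and $A\leq C$, apply Proposition~\ref{Feb_23_2024_2} with $n=|C\smallsetminus A|$ and also with the roles of $B$ and $C$ playing off each other. Concretely, with $n=|C\smallsetminus A|$ and $m=n+|B\smallsetminus A|$ we get $D$ with $f(B)\leq^n D$ and $g(C)\leq D$; then Lemma~\ref{Feb_23_2024_1}, applied $|C\smallsetminus A|$ times to add the elements of $g(C)\smallsetminus g(A)$ one at a time to $f(B)$ — each such element lies in a block over $f(A)\subseteq f(B)$ once we are inside $D$... but this requires those blocks to be over $f(B)$, which need not hold. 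The honest fix is: since $f(B)\leq^n D$ and $|D\smallsetminus f(B)|=|C\smallsetminus A|=n$, taking $X=D\smallsetminus f(B)$ we get $|X|\le n$, hence $f(B)\leq f(B)\cup X=D$ directly from the definition of $\leq^n$. That is the whole point of choosing $n=|C\smallsetminus A|$.

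\textbf{The main step and the obstacle.} Thus the proof reduces to: set $n=|C\smallsetminus A|$, let $m=n+|B\smallsetminus A|$, observe $A\leq^m C$ because $A\leq C$ (Remark~\ref{strongformula}), apply Proposition~\ref{Feb_23_2024_2} to obtain $D\in\Cun$ with $f\restriction A=g\restriction A$, $g(C)\leq D$ and $f(B)\leq^n D$; then since $D\smallsetminus f(B)=g(C)\smallsetminus g(A)$ has size $n$, the definition of $n$-strong gives $f(B)\leq f(B)\cup(D\smallsetminus f(B))=D$. The only subtlety — and the point I would check with care — is that in $D=B^\ast\cup C$ (the free amalgam from the proof of Proposition~\ref{Feb_23_2024_2}) we indeed have $D\smallsetminus f(B^\ast)=C\smallsetminus A^\ast$ with $|C\smallsetminus A^\ast|\le|C\smallsetminus A|=n$, so the bound is exactly right; no further work is needed, and AP follows.
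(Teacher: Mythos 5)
Your final argument is correct, and it reaches the amalgamation property by a genuinely different (though closely related) route from the paper's. The paper's proof simply re-runs the construction from the proof of Proposition~\ref{Feb_23_2024_2}: since $A\leq C$ holds with full strength, the exchange process keeps $A^\ast\leq C$ (using part~1 of Lemma~\ref{Feb_23_2024_1} rather than part~2), and the concluding sum of orderings then exhibits an HF-ordering of the free amalgam $D=B^\ast\cup C$ with initial segment $B^\ast$, giving $B^\ast\leq D$ directly. You instead use the $n$-amalgamation statement itself, choosing $n=|C\smallsetminus A|$, invoking Remark~\ref{strongformula} to get $A\leq^m C$ from $A\leq C$, and then upgrading $f(B)\leq^n D$ to $f(B)\leq D$ by feeding $X=D\smallsetminus f(B)$ into the definition of $\leq^n$; this works because $D\smallsetminus B^\ast=C\smallsetminus A^\ast$ has size at most $|C\smallsetminus A|=n$, which is your one peek into the construction (a small notational slip: the relevant set is $D\smallsetminus B^\ast$, i.e.\ $D\smallsetminus f(B)$, not $D\smallsetminus f(B^\ast)$). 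If you wanted to avoid that peek entirely, you could replace $D$ by $D^\prime=f(B)\cup g(C)$: both $\leq$ and $\leq^n$ pass from $D$ to the intermediate $D^\prime$ (restrict the HF-ordering, respectively shrink the family of test sets $X$), and $|D^\prime\smallsetminus f(B)|\leq |C\smallsetminus A|$ holds automatically, so AP becomes a purely formal consequence of $n$-amalgamation for all $n$. That formal deduction is what your route buys; what the paper's route buys is brevity, since the stronger conclusion drops out of the same construction with no extra bookkeeping. The exploratory detours in your opening paragraphs (the $n=0$ attempt and the attempted repeated use of Lemma~\ref{Feb_23_2024_1}) are dead ends that you correctly abandon, and they do not affect the validity of the final argument.
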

	\begin{proof}
		The proof of Proposition~\ref{Feb_23_2024_2} can be used with the same construction of  the partial Steiner triple system $B^\ast\cong_AB$ and the partial Steiner triple system $D$.
	\end{proof}

	\begin{prop}\label{Feb_23_2024_4} The generic is the free Steiner triple system with $\omega$ generators.
	\end{prop}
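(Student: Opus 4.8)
The plan is to show that the generic model $F$ for $(\Cun,\leq)$ and the free STS $F_\omega$ on $\omega$ generators satisfy each other's characterising properties, and then invoke the uniqueness of the countable generic. Since both are countable, it suffices to prove that $F_\omega$ (with a suitable presentation as an increasing union of finite unconfined subsystems) satisfies the three bullet-point properties that characterise the generic: being a union of a chain $C_0\leq C_1\leq\cdots$ of members of $\Cun$ with union $F_\omega$; the embedding property ($\Cun$-richness: every $A\in\Cun$ embeds $\leq$-strongly); and the extension property (for $A\leq B$ in $\Cun$ with $A\leq F_\omega$, a strong copy of $B$ over $A$ exists in $F_\omega$).

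First I would fix a free base $\{a_i : i<\omega\}$ of $F_\omega$ and a standard free construction, then observe that by Proposition~\ref{May12_2.2} and Remark~\ref{May12_12.0} there is a well-founded HF-ordering of $F_\omega$ (an F-ordering over the base, which is in particular an HF-ordering). Enumerate $F_\omega$ along an order type $\omega$ refinement of the levels, and let $C_n$ be the initial segment consisting of the first $n$ elements together with its induced block set; each $C_n$ is a finite partial STS, it is unconfined because a sub-initial-segment of an HF-ordered system carries an HF-ordering (so $C_n\in\Cun$ by Proposition~\ref{unconfined}(3)), and $C_n\leq C_{n+1}$ and $C_n\leq F_\omega$ precisely because $C_n$ is an initial segment of this HF-ordering. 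This gives the first bullet. For the embedding property, given $A\in\Cun$ take an HF-ordering of $A$; I want to realise a $\leq$-copy of $A$ inside $F_\omega$ disjoint (over nothing) from a chosen $C_n$ — this is really a special case of the extension property with empty base, using $\emptyset\leq A$ from Lemma~\ref{diamond}(1), so I would fold it into the argument for the third bullet.

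The crux is the extension property. Suppose $A\leq B$ in $\Cun$ and $f:A\hookrightarrow F_\omega$ with $f(A)\leq F_\omega$; I must find $g:B\hookrightarrow F_\omega$ extending $f$ with $g(B)\leq F_\omega$. Since $f(A)\leq F_\omega$, there is an HF-ordering of $F_\omega$ with $f(A)$ as an initial segment (by $\leq$, together with $F_\omega=\langle$base$\rangle$); by the closure machinery (Lemma~\ref{order_closure} and the remarks on $\cl_<$), I can arrange this so that $f(A)$ is $<$-closed and the generators of $F_\omega$ not already used are cofinal / freely available. Now take an HF-ordering of $B$ with $A$ as initial segment and list $B\smallsetminus A = \{b_1,\dots,b_k\}$ in that order. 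I build $g$ by induction: each $b_j$ is either a product $a'\cdot a''$ of two earlier elements (already mapped) — in which case $g(b_j)$ is forced and, because $B$ is a partial STS so $\{b_j,a',a''\}$ is the only block on $b_j$ with smaller elements, Lemma~\ref{Feb_23_2024_1}(1)-style reasoning keeps the image strong — or $b_j$ lies on no block with two smaller elements, in which case I send $b_j$ to a fresh free generator of $F_\omega$ not yet in the image; a fresh generator lies on no block at all in $F_\omega$ beyond those it will generate, so adjoining it keeps the partial-STS and unconfinedness conditions and keeps the image an initial segment of (a re-chosen) HF-ordering. Throughout, any block of $B$ entirely inside $A$ maps to the corresponding block of $f(A)$ in $F_\omega$ since $f$ is an embedding on the substructure $A$; and no new block among the $g(b_j)$'s can accidentally appear in $F_\omega$ because the images are either forced products or untouched generators. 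Extending $g$ to a homomorphism $\langle B\rangle\to F_\omega$ by the UMP and checking injectivity (using that $F_\omega$ is free, hence unconfined, so images of distinct terms stay distinct along the construction) completes $g:B\hookrightarrow F_\omega$ with $g(B)\leq F_\omega$.

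Finally, conversely, the generic $F$ is unconfined and finitely generated subsystems of $F$ are finite unconfined, hence free by Proposition~\ref{unconfined}(2); using the chain $C_0\leq C_1\leq\cdots$ and the fact that each $C_n\leq C_{n+1}$ is witnessed by adding elements that are either products of earlier elements or genuinely new (this is exactly what an HF-ordering with $C_n$ initial provides, combined with $C_n$ being unconfined), one sees that $F$ itself admits a well-founded HF-ordering whose $<$-minimal-non-product elements form a free base; by Proposition~\ref{May12_2.2}, $F$ is free, and it is countable with an infinite base, so $F\cong F_\omega$. The main obstacle I anticipate is the bookkeeping in the extension step: one must simultaneously maintain that the running image is an initial segment of some HF-ordering of $F_\omega$, that the new points are genuinely available (never previously used generators, and not forced to coincide with existing points), and that no spurious blocks are created — this is where the closure lemmas (\ref{order_closure}, \ref{Feb_23_2024_7}) and the interpolation Lemma~\ref{June_25_2024_1.1} do the real work, and care is needed because strongness in $F_\omega$ is an infinitary condition (strong in every finite intermediate subsystem) rather than a property of a single ordering.
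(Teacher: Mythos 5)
Your overall route --- verifying that $F_\omega$ itself satisfies the three defining properties of the generic and then invoking uniqueness of the countable generic --- is legitimate and genuinely different from the paper's proof, which goes in the opposite direction: the generic has a well-founded HF-ordering (coming from the chain $C_0\leq C_1\leq\cdots$) and, by genericity, infinitely many elements that are not products of smaller elements, so by Proposition~\ref{May12_2.2} it is free on a countably infinite base; no embedding construction is needed. Your direction forces you to prove the extension property of $F_\omega$ directly, and that is exactly where the proposal has a genuine gap.

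The step that fails as written is the fresh-generator case: you claim that ``a fresh generator lies on no block at all in $F_\omega$ beyond those it will generate''. This is false. The set $f(A)$ is an arbitrary finite strong subset of $F_\omega$, and its elements (and earlier forced images) may involve generators you have never used as image points: if, say, $a_1\cdot a_2$ lies in the current image and $a_2$ has already been chosen, then choosing the unused generator $a_1$ creates the block $\{a_1,a_2,a_1\cdot a_2\}$ inside the image, which need not correspond to any block of $B$, so $g$ ceases to be an embedding of partial STSs. The repair (pick a generator outside the subalgebra generated by the generators occurring in the finitely many current image points) is available, but it is not what you wrote, and even with it two things still require proof: (i) that the enlarged image is again strong in $F_\omega$, i.e.\ an initial segment of some HF-ordering of all of $F_\omega$ --- ``no new blocks appear'' does not give this, and this inductive invariant is precisely what makes your forced-product case work via Lemma~\ref{Feb_23_2024_1}(1); and (ii) that the forced products do not collide with earlier image points or create spurious blocks, which again uses that invariant and is only gestured at. The concluding appeal to the UMP to ``extend $g$ to a homomorphism $\langle B\rangle\to F_\omega$'' is also misplaced: $B$ is a finite partial STS, $g$ is already defined on all of $B$, and the UMP is a property of free STSs, not of $B$. (A smaller point: over an infinite base the levels of $F_\omega$ are infinite, so an HF-ordering of type $\omega$ with finite initial segments must interleave the levels rather than refine the level order; this is fixable but needs saying.)
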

	\begin{proof} First note that the generic $F$ is an STS, that is, the product of two distinct elements $a_1,a_2$ is defined: let $b$ be a new vertex and let $A=\{a_1,a_2,b\}$ be an STS (so $ \{a_1, a_2, b \}$ is a block). Then $A\in\Cun$ and $\{a_1,a_2\}\leq A$, and, therefore, there is an embedding $f:A\rightarrow F$ over $\{a_1,a_2\}$. Moreover, $F$ must be a free Steiner triple system, since it has a well-founded HF-ordering.  On the other hand, the generic can be constructed by adding at each stage an element which does not form a block with any previous elements. Then the corresponding  HF-ordering has infinitely many elements which do not form a block with any preceding elements. By Proposition~\ref{May12_2.2}, these elements belong to a free base of~$F$.
	\end{proof}
	
	We extend the notion of strong subset so that it applies to infinite partial Steiner triple systems. 
\begin{df}  Let $A,B$ be arbitrary partial STSs.
	We write  $A\leq B$ if there is an HF-ordering $<$ of $B$ over $A$, or, equivalently, if there is an HF-ordering of $B$
	that has $A$ as an initial segment.  
\end{df}

	\begin{rmk}\label{Feb_23_2024_6.1} When $A$ is finite and $B$ is unconfined, this definition of $A\leq B$  is equivalent  to the definition given after Lemma~\ref{diamond}, that is  $A\leq B$ if and only if $A\subseteq B$ and $A\leq C$  for every finite $C$ such that  $A\subseteq C\subseteq B$.  More generally, if $A,B$ are arbitrary partial STSs and $B$ is unconfined, then
		$$A\leq B \mbox{ if and only if } A\subseteq B \mbox{ and } A\cap C \leq C \mbox{ for every finite } C\subseteq B.$$
	\end{rmk}
	\begin{proof}  Assume $A \subseteq B$ and $A\cap C\leq C$  for every finite $C\subseteq B$. We show that there is an HF-ordering of $B$  over $A$.  The argument is an application of the compactness theorem for propositional logic.  For every pair $(a,b)\in B^2$  we introduce a propositional variable $p_{(a,b)}$ meaning $a<b$. The set of sentences 
		$$\{\neg p_{(a,a)}\mid a\in B\}\cup\{p_{(a,b)}\wedge p_{(b,c)}\rightarrow p_{(a,c)}\mid a,b,c\in B\}\cup\{p_{(a,b)}\vee p_{(b,a)}\mid a,b\in B,\; a\neq b\}$$
		defines a linear ordering in $B$ and the set 
		$$\{\neg( p_{(b,a)}\wedge p_{(c,a)}\wedge p_{(b^\prime,a)}\wedge p_{(c^\prime,a)})\mid \{a,b,c\}\neq \{a,b^\prime,c^\prime\} \mbox{ blocks  of } B\}$$
		expresses that  the order is an HF-ordering of $B$. Finally, 
		$$\{ p_{(a,b)}\mid   a\in A,\; b\in B\smallsetminus A\}$$
		expresses that $A$ is an initial segment. By assumption, every finite subset of this set of sentences is satisfiable. By compactness, the whole set is satisfiable.
	\end{proof}

	\begin{prop} \label{Feb_23_2024_8} If  $M$ is an unconfined STS  and $A=\acl(A)\subseteq M$, then  $A\leq M$.
	\end{prop}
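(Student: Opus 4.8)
The plan is to reduce to a finite, combinatorial statement via Remark~\ref{Feb_23_2024_6.1}: since $M$ is unconfined, $A\leq M$ is equivalent to $A\cap C\leq C$ for every finite $C\subseteq M$. So I fix a finite $C\subseteq M$, set $A_0=A\cap C$, and aim to produce an HF-ordering of $C$ with $A_0$ as an initial segment. The feature of $A_0$ that I will exploit is that it is algebraically closed relative to $C$: $\acl_M(A_0)\cap C=A_0$, because $\acl_M(A_0)\subseteq\acl_M(A)=A$ and hence $\acl_M(A_0)\cap C\subseteq A\cap C=A_0$. In particular $\langle A_0\rangle_M\cap C=A_0$, which already forces $C$ to have no block with exactly two vertices in $A_0$: the third vertex of such a block would be a product of two elements of $A_0$, hence in $\langle A_0\rangle_M\cap C=A_0$.

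I would then prove, by induction on $|C|$, the statement: if $C$ is a finite unconfined partial STS, $A_0\subseteq C$, and $\acl_M(A_0)\cap C=A_0$, then $A_0\leq C$. Using unconfinedness, choose $e\in C$ lying in at most one block of $C$. If $e\notin A_0$, apply the inductive hypothesis to $C\setminus\{e\}$ (it inherits the hypotheses) and append $e$ at the top of the resulting HF-ordering; since $e$ lies in at most one block this is still an HF-ordering with $A_0$ initial. If $e\in A_0$ lies in no block of $C$, apply the inductive hypothesis to $(A_0\setminus\{e\},C\setminus\{e\})$ and insert the isolated point $e$ anywhere within the $A_0$-part.

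The remaining case --- which I expect to be the real obstacle --- is $e\in A_0$ with a unique block $\{e,x,y\}$; here necessarily $x,y\notin A_0$ (if $x\in A_0$ then $y=e\cdot x\in\langle A_0\rangle_M\cap C=A_0$, a contradiction, and $x,y\in A_0$ is already excluded). Passing to $(A_0\setminus\{e\},C\setminus\{e\})$ the induction gives an HF-ordering $<'$ of $C\setminus\{e\}$ with $A_0\setminus\{e\}$ initial, but reinserting $e$ into the $A_0$-segment turns $\{e,x,y\}$ into a downward block of whichever of $x,y$ is $<'$-larger, so the construction only succeeds if $<'$ can be chosen so that that vertex carries no other downward block. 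If no such $<'$ exists, then $x$ and $y$ are rigidly pinned over $A_0\setminus\{e\}$, and the argument must be that this rigidity forces one of $x,y$ into $\acl_M(A_0\setminus\{e\})\subseteq\acl_M(A_0)$ --- contradicting $\acl_M(A_0)\cap C=A_0$, since $x,y\notin A_0$. This is exactly the phenomenon behind the minimal pairs exhibited earlier in this section (there $(A,B)=(\{b,a_2,a_3\},\{b,a_1,a_2,a_3,a_1a_2,a_1a_3\})$ obstructs strongness precisely because $a_1$ --- and hence $a_1a_2$ and $a_1a_3$ --- would have to lie in $\acl_M(\{b,a_2,a_3\})$). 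Turning ``rigidly pinned'' into genuine algebraicity is the crux; the cleanest route is probably to strengthen the inductive statement so that it also returns, for a prescribed point of $C\setminus A_0$, an HF-ordering in which that point has no downward block (such a point can be placed immediately above $A_0$, where it has no downward block precisely because no block of $C$ has two vertices in $A_0$), arranging the bookkeeping so that the reinsertion of $e$ always goes through unless the algebraic-closure hypothesis is violated.
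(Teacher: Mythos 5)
Your reduction via Remark~\ref{Feb_23_2024_6.1} is legitimate, and the easy cases of your induction (deleting a point $e\notin A_0$ of valency $\le 1$ in $C$, or an isolated $e\in A_0$), as well as the observation that no block of $C$ meets $A_0$ in exactly two points, are fine. But the proof has a genuine gap exactly where you locate it: the case $e\in A_0$ with a block $\{e,x,y\}$, $x,y\notin A_0$. You do not prove the implication ``if no HF-ordering of $C\smallsetminus\{e\}$ over $A_0\smallsetminus\{e\}$ leaves the larger of $x,y$ without another downward block, then $x$ or $y$ is algebraic over $A_0$''; you only assert that this ``must be'' the argument. Moreover, the repair you sketch --- strengthening the induction so that a \emph{prescribed} point of $C\smallsetminus A_0$ can always be placed immediately above $A_0$ with no downward block --- is false as stated: the minimal-pair configuration of Section~\ref{section5} (take $A_0=\{b,a_2,a_3\}$ and $C=\{b,a_1,a_2,a_3,a_1\cdot a_2,a_1\cdot a_3\}$ with $b=(a_1\cdot a_2)\cdot(a_1\cdot a_3)$) shows that every point of $C\smallsetminus A_0$ can be forced to carry a downward block in every ordering with $A_0$ initial. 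Such configurations are excluded only \emph{because} the pinned points are algebraic over $A_0$, which is precisely the unproved crux; and that implication cannot be extracted from the finite set $C$ alone, since $\acl_M$ is a property of the ambient structure. Worse, the natural justification (that $\acl(A)$ is the intersection of the strong substructures containing $A$) is established in the paper only after completeness, which itself rests on this very proposition via Corollary~\ref{Feb_23_2024_9}, so invoking it here would be circular. To turn ``rigidly pinned over $A_0$'' into algebraicity you need a genuinely model-theoretic ingredient producing many distinct conjugates of non-algebraic elements, and your proposal contains none.

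For comparison, the paper avoids the finite induction altogether: it fixes an HF-ordering $<$ of the monster model, uses P.H.~Neumann's Lemma (this is where $A=\acl(A)$ enters) to build $\lambda$ pairwise $A$-isomorphic copies $M_i$ of $M$ meeting only in $A$, and chooses, by a cardinality count, a copy $M_i$ disjoint from $\cl_<(A)\smallsetminus A$; then Lemma~\ref{Feb_23_2024_7} makes $<$ an HF-ordering over $\cl_<(A)$, so $<\restriction M_i$ is an HF-ordering of $M_i$ over $A$, and transporting it along the $A$-isomorphism gives $A\leq M$. That global conjugation step is exactly the mechanism your local argument is missing.
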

	\begin{proof}   We can expand the monster model $\mons$ by adding an HF-ordering $<$ of $\mons$, but we work in $\mons$.   
		By P.H. Neumann's Lemma,  given  $a_1,\ldots,a_n\not\in \acl(A)$, for any set $B$ in $\mons$  there are $b_1\ldots b_n$ with $b_1,\ldots,b_n\equiv_A a_1\ldots a_n$ such that  $b_1,\ldots,b_n\not\in B$. Hence if $M\smallsetminus A=\{a_i\mid i<\kappa\}$  and  $p(x_i\mid i<\kappa)=\tp((a_i\mid i<\kappa)/A)$, then,  for any set $B$,  the type~$p$ has a realization  $(b_i\mid i<\kappa) $ such that   $\{b_i\mid i<\kappa\}\cap B=\emptyset$. Using this, for any $\lambda$  we can construct a sequence of models $(M_i\mid i<\lambda)$  containing $A$ such that  $M=M_0$, $M_i\cong_A M_j$  and $M_i\cap M_j=A$  for every $i\neq j$. Let $B=\cl_<(A)\subseteq \mons$ and let  $\mu= |B|= |A|+\omega$.  We may assume $\mu <\lambda$, which implies  $M_i\cap B = A$  for some $i$ (if $A\subsetneq M_i\cap B$ for every $i<\lambda$, we can choose $a_i\in (M_i\cap B)\smallsetminus A$ and the $a_i$ are all distinct, which contradicts $|B|<\lambda$).  By Lemma~\ref{Feb_23_2024_7}, $<$ is an HF-ordering over $B$. Therefore $<_i=<\restriction M_i$ is an HF-ordering of $M_i$ over  $A= B\cap M_i$. Since $M_i\cong_A M$,  there is an HF-ordering of $M$ over $A$.
	\end{proof}
	
	\begin{cor}\label{Feb_23_2024_9}  If $M_1\preccurlyeq M_2$  are unconfined  STSs, then  $M_1\leq M_2$.
	\end{cor}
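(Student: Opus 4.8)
The plan is to deduce the corollary from Proposition~\ref{Feb_23_2024_8} by showing that $M_1$ is algebraically closed in $M_2$, after which the proposition applied inside $M_2$ gives $M_1 = \acl^{M_2}(M_1) \leq M_2$ directly. So the first step is to recall that, since $M_1 \preccurlyeq M_2$, every element of $M_2$ that is algebraic over $M_1$ already lies in $M_1$: if $\varphi(x,\bar a)$ is a formula over $M_1$ with finitely many solutions in $M_2$, then it has the same (finite) number of solutions in $M_1$ by elementarity, so all its $M_2$-solutions are in $M_1$. Hence $\acl^{M_2}(M_1) = M_1$; in particular $M_1$ is a (possibly infinite) partial STS — indeed a substructure — of $M_2$ with $M_1 = \acl(M_1)$.

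The second step is simply to invoke Proposition~\ref{Feb_23_2024_8} with $M = M_2$ (which is an unconfined STS, being elementarily equivalent to the unconfined $M_1$; here I would note that unconfinedness is expressible by the universal sentences mentioned in Section~\ref{section3}, so it transfers from $M_1$ to $M_2$, or alternatively I would simply observe that the corollary is only asserted under the hypothesis that both are unconfined) and $A = M_1$. Since $M_1 = \acl^{M_2}(M_1)$ and $M_1 \subseteq M_2$, the proposition yields an HF-ordering of $M_2$ over $M_1$, which by definition of $\leq$ for arbitrary partial STSs (the definition given just before Remark~\ref{Feb_23_2024_6.1}) is exactly the statement $M_1 \leq M_2$.

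I do not expect any serious obstacle: the only point that needs a word of care is the passage from ``algebraically closed'' in the model-theoretic sense to the hypothesis $A = \acl(A)$ of Proposition~\ref{Feb_23_2024_8}, and the verification that $M_2$ is still unconfined so that the proposition applies. Both are immediate — the former from the Tarski–Vaught characterization of elementary substructure, the latter because unconfinedness is $\forall$-axiomatizable (or, more cheaply, because it is part of the hypothesis). Everything else is a direct citation.
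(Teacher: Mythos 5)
Your proposal is correct and follows exactly the paper's own argument: the paper proves the corollary by invoking Proposition~\ref{Feb_23_2024_8} with $A=M_1$, $M=M_2$, noting that $\acl(M_1)=M_1$ because $M_1\preccurlyeq M_2$. Your additional remarks (the standard verification that elementary substructures are algebraically closed, and that $M_2$ is unconfined) are just the routine details the paper leaves implicit.
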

	\begin{proof}  By~\ref{Feb_23_2024_8}  since  $\acl(M_1)=M_1$.
	\end{proof}

	\section{Binary trees} \label{section6}
	
	 This section introduces certain tree constructions in unconfined Steiner triple systems. These trees are an essential tool for our proof of the completeness of the theory of infinite unconfined Steiner triple systems in Section~\ref{section7}.

Let $M$ be an unconfined STS. A  \textbf{binary  tree of height $\alpha\leq \omega$} is a subset  $\{a_s\mid s\in 2^{<\alpha}\}$ of $M$ such that
\begin{enumerate}
	\item if $s\neq t$, then $a_s\neq a_t$
	\item $a_s= a_{s0}\cdot a_{s1}$
	\item these are all the blocks of the tree:  the only block containing $a_\emptyset$ is $\{a_\emptyset, a_0, a_1\}$  and if $s=ti$,  the only blocks containing $a_s$  are   $\{a_t,a_{t0},a_{t1}\}$  and  $\{a_s,a_{s0},a_{s1}\}$.   
\end{enumerate}
A \textbf{leaf} is an element $a_s$ of the tree such that there are no $a_{s0}, a_{s1}$ in the tree with $a_s=a_{s0}\cdot a_{s1}$.

\begin{rmk}\begin{enumerate}\label{tree1}
		\item A binary tree of height $n+1$ can be seen as a construction of $a_\emptyset$ from $\{a_s\mid s\in 2^n\}$ in the form of a binary tree, with the root $a_\emptyset$ at the top. Every free STS embeds many copies of this tree: it suffices to choose a level with at least  $2^n$ elements and take the products of these elements that belong to higher levels, thus obtaining the root $a_\emptyset$ after $n$ steps. If $\{a_s\mid s\in 2^n\}$ are the chosen elements in the given level, the tree can be described as $\{a_s\mid s\in 2^{\leq n}\}$, where  $a_s= a_{s0}\cdot a_{s1}$ if  $s\in 2^{<n}$.   A binary tree of height $\omega$  can also be seen as a (non well-founded) construction of $a_\emptyset$ over $\emptyset$. 
		\item The tree $\{a_s\mid s\in 2^{<\alpha}\}$  admits a  natural HF-ordering: we order every level arbitrarily and we make the elements of level $i$ precede the elements of level $i+1$. In particular, $a_s > a_t$  if $s\subsetneq t$.
	\end{enumerate}
\end{rmk}

\begin{lemma}\label{April_11_2024_2} The binary tree of height $\alpha$ has an HF-ordering where the root  $a_\emptyset$ is the smallest element instead of being the largest.
\end{lemma}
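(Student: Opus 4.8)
The plan is to build the desired HF-ordering directly by recursion on the height $\alpha$, reusing the idea behind Lemma~\ref{Feb_23_2024_1}: an element that forms a block with two elements placed before it can always be slid to the front of what follows without violating the HF-condition. Concretely, recall from Remark~\ref{tree1}(2) that the tree $T_\alpha=\{a_s\mid s\in 2^{<\alpha}\}$ splits as the root together with two disjoint subtrees $T^0=\{a_{0t}\mid 0t\in 2^{<\alpha}\}$ and $T^1=\{a_{1t}\mid 1t\in 2^{<\alpha}\}$, each isomorphic to a binary tree of height one less (or of height $\alpha$ again, when $\alpha=\omega$). The only block of $T_\alpha$ meeting both $T^0$ and $T^1$ is $\{a_\emptyset,a_0,a_1\}$, and within each $T^i$ the element $a_i$ is the root of $T^i$.

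First I would handle the base and successor steps together. Suppose inductively that each $T^i$ carries an HF-ordering $<_i$ in which its own root $a_i$ is the \emph{smallest} element. Then form the concatenation
\[
< \;=\; \{a_\emptyset\} \;+\; (T^0,<_0) \;+\; (T^1,<_1),
\]
placing $a_\emptyset$ first, then all of $T^0$, then all of $T^1$. I claim this is an HF-ordering of $T_\alpha$ with $a_\emptyset$ smallest. The only thing to check is that no element has two blocks among its predecessors. For $a_\emptyset$ there are no predecessors. For $a_1$: its two blocks in $T_\alpha$ are $\{a_\emptyset,a_0,a_1\}$ and $\{a_1,a_{10},a_{11}\}$; its predecessors in $<$ are $a_\emptyset$ together with all of $T^0$, which contain only the partial block $\{a_\emptyset,a_0\}$ of the first block and none of $\{a_{10},a_{11}\}$ (those lie in $T^1$, after $a_1$). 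So $a_1$ has at most one block of predecessors. For any other element $a_s$ with $s=ti$ lying in, say, $T^0$: its predecessors inside $T^0$ are governed by $<_0$ being an HF-ordering, so at most one block of predecessors comes from $T^0$; the only other block it could belong to is $\{a_\emptyset,a_0,a_1\}$ (when $a_s=a_0$), but if $a_s=a_0$ then $a_1\notin T^0$ and $a_\emptyset$ alone is not two elements, and in any case $a_0$ is the $<_0$-smallest element of $T^0$ so it has no predecessors in $T^0$ at all. Hence the concatenation is an HF-ordering, and by construction $a_\emptyset$ is its least element, completing the induction.

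For the limit case $\alpha=\omega$ the same argument applies verbatim: $T^0$ and $T^1$ are themselves binary trees of height $\omega$, so the inductive hypothesis supplies HF-orderings $<_0,<_1$ with roots smallest, and the concatenation $\{a_\emptyset\}+(T^0,<_0)+(T^1,<_1)$ is an HF-ordering with $a_\emptyset$ smallest by the identical verification; the recursion here is not on an ordinal but is a single self-referential application, which is legitimate because each subtree is a genuinely smaller object inside $M$ in the sense that the combinatorial description reduces (or one may simply note that an explicit ordering is exhibited levelwise, see below).

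The step I expect to require the most care is making the limit construction rigorous rather than circular: one cannot literally "recurse on $\omega$" to reach height $\omega$. The clean fix is to give the ordering explicitly: order $2^{<\omega}$ so that $s$ precedes $t$ iff either $s\supsetneq t$ as strings (i.e.\ $a_s$ sits strictly below $a_t$ in the tree — note this reverses depth, putting the root last among comparable nodes, except we force the root globally first), or $s$ and $t$ are $\subseteq$-incomparable and $s$ lies to the left of $t$ in some fixed ordering of incomparable nodes, with the single override that $a_\emptyset$ is declared the global minimum. Concretely: put $a_\emptyset$ first, then enumerate $\{a_s\mid s\in 2^{<\omega},\ s\neq\emptyset\}$ by first grouping according to the first coordinate of $s$ ($0$ before $1$) and recursively within each group — which is exactly the concatenation above unfolded. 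One then checks the HF-condition uniformly: the predecessors of $a_s$ are $a_\emptyset$ together with all $a_u$ with $u\neq\emptyset$ lexicographically-below $s$ in this scheme, and among these the only block that can be completed is the "downward" block $\{a_s,a_{s0},a_{s1}\}$ — impossible since $a_{s0},a_{s1}$ come after $a_s$ — or, when $s\in\{0,1\}$, the root block, which contributes at most $\{a_\emptyset,a_0\}$ before $a_1$ and nothing before $a_0$. So exactly one block of predecessors occurs for each node, as required. This direct description sidesteps the pseudo-recursion entirely and still makes $a_\emptyset$ the least element.
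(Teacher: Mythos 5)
Your recursive construction is correct for finite heights and is a genuinely different route from the paper's. The paper disposes of the lemma in one line: take the natural HF-ordering of Remark~\ref{tree1}(2), in which $a_\emptyset$ is the largest element, and reverse it; since each node lies in at most two blocks and in the reversed order only the block through the parent can have both of its other members below a given node, the reverse is again an HF-ordering, now with $a_\emptyset$ least. That argument needs no recursion and treats $\alpha$ finite and $\alpha=\omega$ uniformly, which is exactly the point where your proof has to work hardest. Your decomposition ``root, then left subtree with its root first, then right subtree with its root first'' (a preorder traversal) is a perfectly good alternative, and your verification of the successor step --- in particular that $a_1$ acquires only the root block below it because $a_{10},a_{11}$ come later --- is correct.

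The limit case, however, contains a genuine error as written: your two ``explicit'' descriptions of the order do not coincide, and the first one is not an HF-ordering. If $s$ precedes $t$ whenever $s\supsetneq t$ (descendants before ancestors), incomparable nodes are ordered left-to-right, and $a_\emptyset$ is forced to be the global minimum, then $a_1$ has \emph{two} blocks completed below it: $\{a_\emptyset,a_0,a_1\}$ (since $a_\emptyset,a_0<a_1$) and $\{a_1,a_{10},a_{11}\}$ (since $a_{10},a_{11}$ are descendants of $1$ and hence precede $a_1$). The order that actually unfolds your concatenation is the opposite convention on comparable nodes, namely the prefix-first (lexicographic/preorder) order in which every node precedes its descendants; that order does work. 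But then your ``uniform check'' needs amending: it is not true that the only completable block is the downward one or, for $s\in\{0,1\}$, the root block --- for a right child $a_{t1}$ the parent block $\{a_t,a_{t0},a_{t1}\}$ is completed below it (harmlessly, since the downward block never is, so there is still at most one); and ``exactly one block of predecessors occurs for each node'' is also false, since $a_0$ and all left children have none. These slips are repairable --- state the order as ``$a_s$ before $a_t$ iff $s$ is a proper prefix of $t$, or $s$ has $0$ and $t$ has $1$ at the first position where they differ,'' and check that the child block of any node is never completed below it --- but as submitted the limit case rests on an incorrect description and an incorrect verification claim.
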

\begin{proof}  If $<$ is the natural HF-ordering of the binary tree, with $a_\emptyset$ at the top, then the reverse order $>$ is also an HF-ordering and now $a_\emptyset$ is the smallest element.
\end{proof}

Given an element $a$ of an infinite unconfined Steiner triple system, we can build a binary tree of arbitrary finite length having $a$ as its root. If the STS is $\omega$-saturated, we can find such a tree of height $\omega$.

\begin{lemma}\label{tree2}
	\begin{enumerate}
		\item An HF-ordering of an infinite STS does not have a maximum.
		\item If $<$ is an HF-ordering of an infinite Steiner triple system $M$, then for every $a\in M$, for every  $b \geq a$, there are $b_1,b_2>b$ such that  $a=b_1\cdot b_2$.
	\end{enumerate}
\end{lemma}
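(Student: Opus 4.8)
The plan is to prove both parts together, using the fact that an infinite STS has infinitely many blocks through each element, together with the defining property of HF-orderings that each element has at most one block of strictly smaller elements. First I would establish part (1): suppose toward a contradiction that $<$ is an HF-ordering of an infinite STS $M$ with a maximum element $m$. Since $M$ is infinite, $m$ lies in infinitely many blocks --- recall that in any infinite STS every element is in a block with every other element, so the blocks through $m$ partition $M\smallsetminus\{m\}$ into two-element sets, and there are infinitely many of them. But in every such block $\{m,x,y\}$ both $x<m$ and $y<m$ since $m$ is the maximum. This gives infinitely many blocks containing $m$ with both other elements smaller than $m$, whereas an HF-ordering permits at most one such block. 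Contradiction, so there is no maximum.

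For part (2), fix $a\in M$ and $b\geq a$; I want $b_1,b_2>b$ with $a=b_1\cdot b_2$, i.e.\ a block $\{a,b_1,b_2\}$ with $b_1,b_2>b$. Consider the set $P$ of elements $x\in M$ with $x\leq b$. The key point is that $P$ is \emph{finite}: if $P$ were infinite, then $(P,<\restriction P)$ would be an infinite HF-ordering (being a restriction of an HF-ordering to an initial segment, hence still an HF-ordering) \emph{with maximum element $b$}, contradicting part (1). So $P$ is finite. Now $a$ lies in infinitely many blocks of $M$ (again because $M$ is infinite), and each block through $a$ is a two-element subset of $M\smallsetminus\{a\}$; since only finitely many elements lie in $P$, all but finitely many blocks through $a$ have \emph{both} their non-$a$ elements outside $P$, i.e.\ strictly greater than $b$. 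Picking any such block $\{a,b_1,b_2\}$ gives $b_1,b_2>b$ with $a=b_1\cdot b_2$, as required. (We could equally observe that a block $\{a,b_1,b_2\}$ with exactly one of $b_1,b_2$ in $P$ and the other $>b$ cannot account for all but finitely many blocks either, by the same finiteness of $P$; and a block with $b_1<b<b_2$ say is fine for our purposes only if we insisted on both being $>b$, which the counting does provide.)

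The main obstacle --- really the only subtle point --- is justifying that the restriction of an HF-ordering to an initial segment is again an HF-ordering of that initial segment, and then feeding this into part (1): this is what forces the downward cone $\{x : x\leq b\}$ to be finite. This is immediate from the definition of HF-ordering (the condition "at most one block of strictly smaller elements" only becomes easier to satisfy when we pass to a sub-ordering that is downward closed, since we are removing potential witnessing blocks), but it is worth stating explicitly. Everything else is a counting argument: infinitely many blocks through a fixed point versus a finite "bad" region, so cofinitely many blocks avoid it.
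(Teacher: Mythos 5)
Your part (1) is correct and is essentially the paper's argument; the paper merely exhibits two blocks through the supposed maximum rather than infinitely many.

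Part (2), however, has a genuine gap: the ``key point'' that $P=\{x\in M\mid x\leq b\}$ is finite is false. Part (1) is a statement about HF-orderings of infinite \emph{Steiner triple systems}, whereas the restriction of $<$ to the initial segment $P$ is only an HF-ordering of a \emph{partial} STS ($P$ need not be closed under the product), so part (1) does not apply to it, and indeed the analogue of (1) for partial STSs is false. Concretely, let $M$ be freely generated by an infinite set $A=\{a_i\mid i<\omega\}$ and order $M$ by levels over $A$ as in Remark~\ref{May12_12.0}, so that $A$ is an initial segment; this is an HF-ordering, and if $b$ is the least element of level $1$, then all of $A$ lies below $b$, so $P$ is infinite. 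The counting argument collapses with it: taking $a=a_0$ and this $b$, the blocks $\{a_0,a_i,a_0\cdot a_i\}$ with $i\geq 1$ are infinitely many blocks through $a$ meeting $P$, so it is not true that cofinitely many blocks through $a$ avoid $P$, and your argument produces no block lying entirely above $b$ (although such blocks do exist). Note also that any correct proof must use the HF-property of the ordering above $b$ and not just cardinality counting, since for an arbitrary linear ordering of an infinite STS statement (2) fails (e.g.\ when $a$ is the maximum).

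For comparison, the paper's proof of (2) goes as follows: if the claim fails, pick $b\geq a$ such that every pair with product $a$ has an element $\leq b$, and choose $b_1>b_2>b$ with $b_1\cdot b_2<b_1$ (possible since, by part (1), there are infinitely many elements above $b$: if $c<d$ lie above $b$ and $c\cdot d>d$, replace the pair by $d<c\cdot d$, whose product is $c<c\cdot d$). One checks $\{b_1,b_2,b_1\cdot b_2\}\neq\{b_1,a,a\cdot b_1\}$, so if $a\cdot b_1<b_1$ these would be two distinct blocks of smaller elements at $b_1$, contradicting HF-ness; hence $a\cdot b_1>b_1>b$, and $a=b_1\cdot(a\cdot b_1)$ with both factors above $b$ contradicts the choice of $b$. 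You need an argument of this kind, exploiting the HF-condition at elements above $b$, rather than the finiteness of the downward cone.
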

\begin{proof}
	1. Assume $a\in M$ is a maximum. Since $M$ is infinite, $a$ belongs to two different blocks, $\{a,a_1,a\cdot a_1\}$ and $\{a,a_2,a\cdot a_2\}$. Then $a> a_1,a_1\cdot a, a_2, a_2\cdot a$, in contradiction with $<$ being an HF-ordering.
	
	2. Assume not, and take $b \geq a$ such that  if $a=a_1\cdot a_2$, then $a_1\leq b$ or $a_2\leq b$. Choose $b_1>b_2>b$ such that  $b_1\cdot b_2<b_1$. Note that $\{b_1,a,a\cdot b_1\}\neq \{b_1,b_2, b_1\cdot b_2 \}$, since otherwise $a=b_1\cdot b_2$ and  $b_1,b_2>b$. If $b_ 1> a\cdot b_1$, then  the previous blocks contradict that $<$ is an HF-ordering. Hence, $b_1<a\cdot b_1$. In this case, $a= b_1\cdot (a\cdot b_1)$  with $b_1,a\cdot b_1>b$, a contradiction with the choice of $b$.
\end{proof}

\begin{prop}\label{tree3} Let $<$ be an HF-ordering of the infinite Steiner triple system $M$ and let $n<\omega$. For every $a\in M$    there is a binary tree $T=\{a_s\mid s\in 2^{<n}\}$  in $M$  such that
	\begin{enumerate}
		\item $a_s>a$  for every $s$
		\item $a_s>a_t$  if  $s\subsetneq t$.
	\end{enumerate}
\end{prop}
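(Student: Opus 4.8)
I would argue by induction on $n$, but with a hypothesis strong enough to iterate. For $b\in M$ and a finite $F\subseteq M$, call a family $T=\{a_s\mid s\in 2^{<n}\}\subseteq M$ a \emph{tree good for $(b,F)$} if it satisfies conditions (1)--(3) of the binary-tree definition, every $a_s$ is $<$-above $b$, it satisfies condition~(2) of the proposition (so its root is the $<$-largest vertex), $T\cap F=\emptyset$, and the only blocks of $M$ with all three vertices in $F\cup T$ are the blocks lying inside $F$ together with the blocks $\{a_s,a_{s0},a_{s1}\}$ of $T$. The statement to prove by induction is: whenever $b\geq a$ and $b\geq f$ for every $f\in F$, there is a tree good for $(b,F)$ of height $n$. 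Specialising to $F=\emptyset$, $b=a$ gives the proposition.

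The base cases come straight from Lemma~\ref{tree2}. For $n=1$ one only needs a vertex $<$-above $b$, outside $F$ and outside the finite set of products of pairs from $F$; such a vertex exists because, by Lemma~\ref{tree2}(1), the ordering has no maximum. For $n=2$, pick such a vertex $c$ and apply Lemma~\ref{tree2}(2) to $c$ with a threshold chosen above $b$, above $c$, and above every product of a pair from $F$, obtaining $c<x<y$ with $\{c,x,y\}$ a block; then $\{y,x,c\}$, rooted at $y$, is a tree good for $(b,F)$, the only point to check being that no further block appears, which follows from the uniqueness of the block through a given pair together with the choice of threshold.

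For the inductive step $n\to n+1$ with $n\geq 2$: first build a tree $T_1$ good for $(b,F)$ of height $n$, with root $r_1$, and then a tree $T_0$ good for $(r_1,F\cup T_1)$ of height $n$, with root $r_0$. Since $r_1=\max T_1$, all of $T_0$ lies $<$-above $T_1$, so $T_0\cap T_1=\emptyset$ and $r_0>r_1$. Put $R:=r_0\cdot r_1$ and let the new tree be $T_0\cup T_1\cup\{R\}$ with root $R$ and children $r_0,r_1$. The crucial observation is that $R>r_0$ is \emph{forced}: because $n\geq 2$, in $T_0$ the root $r_0$ already lies in the block joining it to its two children, both $<$-below $r_0$, and since $r_1\notin T_0$ that block is distinct from $\{R,r_0,r_1\}$; if $R$ were $<$-below $r_0$, then $r_0$ would lie in two distinct blocks with both other vertices below it, contradicting that $<$ is an HF-ordering. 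Hence $R>r_0>r_1$, so $R$ is a genuinely new vertex, $<$-above everything built so far, and $\{R,r_0,r_1\}$ is its downward block. Conditions (1) and (2) are then immediate, and one verifies the new tree is good for $(b,F)$ by a short case analysis on the $<$-maximum $m$ of a hypothetical extra block: if $m=R$ or $m$ is an internal node, the HF-property of $<$ gives a contradiction; if $m$ is a leaf it is a leaf of $T_0$ or of $T_1$, and goodness of that piece (over $F\cup T_1$, resp.\ over $F$) rules the block out. This closes the induction.

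The step I expect to be the real obstacle is exactly this merge. Lemma~\ref{tree2}(2) only ever manufactures new, \emph{larger} vertices, so iterating it naturally produces ``caterpillars'' rather than branching trees, and the non-leaf siblings it creates come with no control over their substructure; when one instead merges two previously built subtrees one is obliged to take the product $r_0\cdot r_1$ of two \emph{already chosen} roots, which is a priori uncontrolled. The point is that the HF-ordering tames this product for free --- a vertex that is already the $<$-largest vertex of a block of the tree cannot be the $<$-largest vertex of a second such block --- which is what makes the branching recursion go through. The remaining work, namely keeping the two subtrees disjoint and checking condition (3) of the binary-tree definition, is routine and is handled uniformly by carrying the forbidden set $F$ through the induction.
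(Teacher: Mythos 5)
Your proposal is correct and follows essentially the same route as the paper: induction on $n$, base case produced by Lemma~\ref{tree2}, and an inductive step that glues two height-$n$ trees, the second built entirely above the root of the first, takes the product of the two roots as the new root, and uses the HF-property (via the fact that for $n\geq 2$ the upper root already has its children as a block of smaller elements) to force that product above everything --- exactly the paper's key observation. The only difference is your forbidden-set bookkeeping, which explicitly verifies disjointness and condition~3 of the binary-tree definition (no extra blocks among the glued pieces), details the paper's proof leaves implicit.
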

\begin{proof}
	By induction on $n$. The initial cases $n=0,1$ are trivial. Consider the case  $n=2$. Let $b>a$. We use Lemma~\ref{tree2} to get some $b_1 >b_2 >b$ in $M$ such that $b=b_1\cdot b_2$. We put $a_\emptyset = b_1$,  $a_0=b_2$ and $a_1 = b$.

	Now we assume inductively the result holds for $n\geq 2$.
	Given $a\in M$, the inductive hypothesis gives some tree $T_0=\{a_s\mid s\in 2^{<n}\}$ as required for $a$.  Again by the inductive hypothesis we can find in $M$ a tree $T_1=\{a^\prime_s\mid s\in 2^{<n}\}$ with $a^\prime_s>a_\emptyset$ and condition 2 holds. Now we combine $T_0$ and $T_1$ to get a binary tree $T=\{b_s \mid s\in 2^{<n+1}\}$. We set $b_{0s}= a_s$, $b_{1s}= a^\prime_s$  and  $b_\emptyset = a_\emptyset\cdot a^\prime_\emptyset$.  It is enough to check that   $a_\emptyset\cdot a^\prime_\emptyset > a^\prime_\emptyset$. Since $n\geq 2$ this is clear, since otherwise the blocks $\{a^\prime_\emptyset,a^\prime_0,a^\prime_1\}$  and  $\{a^\prime_\emptyset, a_\emptyset, a_\emptyset \cdot a^\prime_\emptyset\}$ are different and contradict that $<$ is an HF-ordering.
\end{proof}

\begin{cor}\label{tree4}
	Let $<$ be an HF-ordering of the infinite STS $M$ and assume $(M,<)$ is $\omega$-saturated.  For every $a\in M$    there is a binary tree $T=\{a_s\mid s\in 2^{<\omega}\}$  in $M$ such that
	\begin{enumerate}
		\item $a_s>a$  for every $s$
		\item $a_s>a_t$  if  $s\subsetneq t$.
	\end{enumerate}
\end{cor}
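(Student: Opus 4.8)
The plan is to deduce Corollary~\ref{tree4} from Proposition~\ref{tree3} by a standard compactness-plus-saturation argument. For each $n<\omega$, Proposition~\ref{tree3} guarantees the existence of a finite binary tree $T_n=\{a_s\mid s\in 2^{<n}\}$ in $M$, all of whose elements lie $<$-above the fixed $a$, and which is $<$-decreasing along branches. The goal is to find a single infinite tree $\{a_s\mid s\in 2^{<\omega}\}$ satisfying both conditions; we get it as a realisation in $(M,<)$ of a suitable type over the parameter $a$.

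First I would introduce variables $(x_s\mid s\in 2^{<\omega})$ and write down the partial type $p\bigl((x_s)_{s\in 2^{<\omega}}; a\bigr)$ consisting of: the distinctness conditions $x_s\neq x_t$ for $s\neq t$; the block equations $x_s\approx x_{s0}\cdot x_{s1}$ (expressed via $R$ in the relational language, taking care of the $s0,s1$ labelling); the clause asserting, for each $s$, that the only blocks through $x_s$ are $\{x_{s^-},x_{(s^-)0},x_{(s^-)1}\}$ and $\{x_s,x_{s0},x_{s1}\}$ (this is a first-order condition once we fix the parameters, quantifying over possible third block-members); and finally the ordering conditions $x_s>a$ for all $s$ and $x_s>x_t$ whenever $s\subsetneq t$, using the predicate for $<$ available in the expanded structure $(M,<)$. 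Every finite subset $p_0$ of $p$ mentions only finitely many variables $x_s$, all with $s\in 2^{<n}$ for some $n$; Proposition~\ref{tree3} applied to this $n$ produces a tree $T_n$ in $M$ whose elements, interpreted for the $x_s$, satisfy all the conditions in $p_0$ — the tree axioms hold by the definition of binary tree, and conditions 1 and 2 of $p$ are exactly conditions 1 and 2 of the proposition. Hence $p$ is finitely satisfiable in $(M,<)$.

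Since $p$ is a partial type over the single parameter $a$ and $(M,<)$ is $\omega$-saturated, $p$ is realised in $(M,<)$ by some tuple $(a_s\mid s\in 2^{<\omega})$. This realisation is by construction a binary tree of height $\omega$ in $M$ satisfying conditions 1 and 2, which is exactly what the corollary asserts.

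The only point requiring care — and the place I would expect the main (minor) obstacle — is the precise first-order formulation of clause 3 in the definition of binary tree within the relational language: one must ensure that for each node the type $p$ says "no block other than the two prescribed ones passes through $x_s$," which is a universally quantified statement over one extra variable, and check that the finite tree $T_n$ furnished by Proposition~\ref{tree3} genuinely satisfies these no-extra-block conditions for the nodes it contains. This is immediate from the definition of a binary tree, since $T_n$ is itself a binary tree and clause 3 is part of that definition; so finite satisfiability goes through without difficulty, and the argument is complete.
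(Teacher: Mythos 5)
Your overall strategy is exactly the paper's: the tree is obtained as a realisation, in the $\omega$-saturated structure $(M,<)$, of a partial type over the parameter $a$ whose finite satisfiability is supplied by Proposition~\ref{tree3}. However, the step you yourself single out as needing care is where the write-up goes wrong. You render clause~3 of the definition of a binary tree as a universally quantified statement over one extra variable ranging over $M$, asserting that no block of $M$ other than the two prescribed ones passes through $x_s$. In an infinite Steiner triple system every element $b$ lies in infinitely many blocks (for each $c\neq b$ the set $\{b,c,b\cdot c\}$ is a block), so such a statement is false of \emph{every} element of $M$; in particular the finite trees produced by Proposition~\ref{tree3} do not satisfy it, contrary to your claim, and the type as you formulate it is not even consistent with $\Th(M,<)$. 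Clause~3 is a condition only on the blocks contained in the tree, i.e.\ on the partial STS induced on $\{a_s\}$: the only blocks among tree elements are the parent--children triples.

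The repair is immediate: replace your universally quantified clauses by the quantifier-free conditions $x_u\cdot x_v\neq x_w$ for every triple of pairwise distinct indices $\{u,v,w\}$ not of the form $\{s,s0,s1\}$, together with the equations $x_s=x_{s0}\cdot x_{s1}$, the distinctness conditions, and the order conditions $x_s> a$ and $x_s> x_t$ for $s\subsetneq t$ that you already have. These conditions are satisfied by the finite trees of Proposition~\ref{tree3}, since those are binary trees in the sense of the definition, so the type is finitely satisfiable and $\omega$-saturation of $(M,<)$ realises it; with that correction your compactness-plus-saturation argument goes through and coincides with the paper's proof.
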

\begin{proof}
	The tree is a realization of a type over $a$ in $M$ in the extended language with the order relation. The consistency of the type is granted by Proposition~\ref{tree3}.
\end{proof}

 Binary trees allow us to give conditions under which a certain element in an infinite unconfined STS $M$ can be placed between a subsytem $A$ of $M$ and the complement $M \smallsetminus A$. In the presence of a suitable amount of saturation, for a $<$-closed set $A$ we can always find an HF-ordering and a least element in $M \setminus A$ that is above $A$. This result in essential in the back-and-forth argument that proves the completeness of the theory of infinite unconfined STSs in Section~\ref{section7}.

\begin{prop} \label{May_26_2024_1} Assume  $M$ is an infinite unconfined STS, $A=\langle A\rangle \subseteq  M$  and  $a\in M\smallsetminus A$. The following are equivalent:
	\begin{enumerate}
		\item  $\cl_<(a)=\{a\}$ for some HF-ordering of $M$ with initial segment  $A$
		\item   $A\leq Aa\leq M$
		\item There are $M^\prime\succeq M$, an HF-ordering $<^\prime$ of $M^\prime$ and a binary tree $T=\{a_s\mid s\in 2^{<\omega}\}$ in $M^\prime$ such that $\cl_{<^\prime}(A)=A$,  $a=a_\emptyset$, $T>^\prime A$ and  $a_s>^\prime a_{s0},a_{s1}$ for every~$s\in$~$ 2^{\omega}$.
	\end{enumerate}
	Moreover, these conditions imply $a\leq M$.
\end{prop}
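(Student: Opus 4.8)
The plan is to prove the three conditions equivalent by a cycle (1)$\Rightarrow$(2)$\Rightarrow$(3)$\Rightarrow$(1), and then extract the ``moreover'' clause $a\leq M$ from condition (1) directly.

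For (1)$\Rightarrow$(2), suppose $<$ is an HF-ordering of $M$ with initial segment $A$ and $\cl_<(a)=\{a\}$. Since $A$ is an initial segment it is $<$-closed, so $A\cup\{a\}=\cl_<(A\cup\{a\})$ is $<$-closed as well. By Lemma~\ref{Feb_23_2024_7}, $<$ is an HF-ordering of $M$ over $A\cup\{a\}$, hence (via Lemma~\ref{order-over}) we may rearrange to an HF-ordering of $M$ with initial segment $Aa$; this gives $Aa\leq M$. Moreover $A$ is still an initial segment of the restriction of $<$ to $Aa$ (it was an initial segment of all of $M$), so $A\leq Aa$. For (2)$\Rightarrow$(3), from $A\leq Aa$ and $Aa\leq M$ and transitivity of $\leq$ (which follows from composing HF-orderings as in the proof of Lemma~\ref{order-over}, or from Remark~\ref{Feb_23_2024_6.1}) we get $A\leq M$; fix an HF-ordering $<$ of $M$ with initial segment $A$ in which $a$ is additionally the least element above $A$ --- this is possible because $Aa$ is an initial segment and $A$ an initial segment of it. Then $\cl_<(a)=\{a\}$, i.e.\ $a$ is not a product of two smaller elements (any such pair would lie in $A$, but then $a\in\langle A\rangle=A$, contradiction). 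Now pass to an $\omega$-saturated elementary extension $(M',<')$ of $(M,<)$ in the language with the order. Since $a$ is not a product of smaller elements and $M'$ is infinite, Lemma~\ref{tree2}(2) lets us start growing a tree above $a$; applying Corollary~\ref{tree4} (working above the cut, i.e.\ realizing the tree type over $A\cup\{a\}$ using elements $>'$ everything in $A$) yields a height-$\omega$ binary tree $T$ with root $a=a_\emptyset$, all nodes $>'A$ and $a_s>'a_{s0},a_{s1}$. Since $A$ is an initial segment of $<'$ it is $<'$-closed, so $\cl_{<'}(A)=A$. For (3)$\Rightarrow$(1) we go back down: given such an $M'$, $<'$ and $T$, use Lemma~\ref{June_25_2024_1.1} (interpolation) to reorganize $<'$ so that $A$ remains the initial segment, immediately followed by $a=a_\emptyset$, and the tree $T$ can be reordered with $a_\emptyset$ smallest (Lemma~\ref{April_11_2024_2}) --- the point of the tree is precisely to certify that $\cl_{<'}(a)\cap M'$ need not grow downward past $a$, so that placing $a$ just after $A$ keeps the HF-property. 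Restricting the resulting HF-ordering of $M'$ to $M$ (legitimate since $M\preceq M'$ and being an HF-ordering with $A$-then-$a$ as initial segment is expressible) gives the desired HF-ordering of $M$ witnessing $\cl_<(a)=\{a\}$.

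The main obstacle is the direction (3)$\Rightarrow$(1): one has a tree and an ordering in a large extension $M'$, and must descend to $M$ while keeping both ``$A$ is an initial segment'' and ``$\{a\}$ is $<$-closed''. The subtlety is that in $M$ there may be elements $b$ with $b\cdot c=a$ for some $c\in M$ where neither $b$ nor $c$ lies in $A$; the tree in $M'$ is what guarantees we can route the ordering so that such $b,c$ end up \emph{above} $a$ rather than below it --- concretely, the tree occupies ``room above $a$'' and the HF-condition at $a$ forces at most one block below $a$, which we arrange to be empty. Making this descent rigorous requires care in how Lemma~\ref{June_25_2024_1.1} is applied: we want to interpolate $\{a\}\cup(\text{tree})$ as a $<'$-closed block sitting right after $A$, which needs $\cl_{<'}(\{a\}\cup T)$ to be controlled --- by the tree axioms the only blocks involving tree nodes are internal to the tree, so $\cl_{<'}(\{a\}\cup T)=\{a\}\cup T$ once $a_\emptyset$ is made minimal, and the interpolation goes through.

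For the ``moreover'' clause, assume (1): we have an HF-ordering $<$ of $M$ with initial segment $A$ and with $\{a\}$ $<$-closed. Then $\{a\}\cup A$ is $<$-closed, but we want $\{a\}\leq M$, i.e.\ an HF-ordering with $\{a\}$ alone as initial segment. Since $\cl_<(\{a\})=\{a\}$, Lemma~\ref{Feb_23_2024_7} gives that $<$ is an HF-ordering of $M$ over $\{a\}$, and then Lemma~\ref{order-over} lets us move $a$ to the front: the sum $(\{a\},<\restriction\{a\})+(M\smallsetminus\{a\},<\restriction(M\smallsetminus\{a\}))$ is an HF-ordering of $M$ with $\{a\}$ an initial segment, so $a\leq M$.
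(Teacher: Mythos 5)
Your handling of (1)$\Rightarrow$(2), of (3)$\Rightarrow$(1), and of the ``moreover'' clause essentially matches the paper (move $a$ to the bottom, or just above $A$; use $\cl_{<'}(T)=T$ together with Lemmas~\ref{April_11_2024_2} and~\ref{June_25_2024_1.1} to descend to $M$ --- two small corrections there: in (3) the set $A$ need not already be an initial segment of $<'$, you first make it one using $\cl_{<'}(A)=A$ and Lemma~\ref{Feb_23_2024_7}; and $\cl_{<'}(T)=T$ follows from the HF-property of $<'$ plus $a_s>'a_{s0},a_{s1}$, not from the tree having no blocks with outside elements, which the tree axioms do not exclude). The genuine gap is in your (2)$\Rightarrow$(3). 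You assert that Corollary~\ref{tree4}, applied in an $\omega$-saturated $(M',<')\succeq (M,<)$, ``yields a height-$\omega$ binary tree with root $a=a_\emptyset$, all nodes $>'A$ and $a_s>'a_{s0},a_{s1}$.'' Corollary~\ref{tree4} yields no such thing: it produces a tree lying entirely \emph{above} $a$, not containing $a$, with that tree's own root on top. Worse, the configuration you claim cannot exist for the order $<'$ at all: since $\cl_<(a)=\{a\}$, the sentence $\forall xy\,(x,y<a\rightarrow x\cdot y\neq a)$ holds in $(M,<)$ and hence in $(M',<')$, so $a$ has no pair of $<'$-smaller elements whose product is $a$; in particular no tree rooted at $a$ with $a_\emptyset>'a_0,a_1$ can be found in $(M',<')$. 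The content of (3) is precisely that one must pass to a \emph{different} HF-ordering of $M'$, and your argument never modifies the order in this direction (Lemma~\ref{tree2}(2) only gives children of $a$ lying above $a$, which does not meet condition (3) either).

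What is actually needed here is the paper's iterative construction: take an upward tree $T_0=\{a_s\mid s\in 2^{<\omega}\}$ above $a$ from Corollary~\ref{tree4}, check $a\cdot a_\emptyset>'a_\emptyset$, set $b_0=a\cdot a_\emptyset$ (so $a=a_\emptyset\cdot b_0$), observe that $\cl_{<'}(\{b_0\}\cup T_0)=\{b_0\}\cup T_0\cup\{a\}$, and use the interpolation Lemma~\ref{June_25_2024_1.1} to move $\{b_0\}\cup T_0$ below $a$ but above $A$, so that $a$ becomes the root of an incomplete tree with one complete subtree ($T_0$) and one new leaf ($b_0$); then repeat the whole step below $b_0$, and iterate $\omega$ times to complete the tree. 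So the forward direction requires an $\omega$-fold reordering argument that your proposal omits entirely; as written, the step from (1)/(2) to (3) does not go through.
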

\begin{proof} 1 $\Rightarrow$ (2 and $a\leq M$). Fix an HF-ordering $<$ of $M$ with initial segment $A$ and such that $\cl_<(a)=\{a\}$.  Then  we can move $a$ to the bottom (showing $a\leq M$) and we can move $a$ to the top of $A$, making it the first element of $M\smallsetminus A$ (showing $Aa\leq M$).
	
	2 $\Rightarrow$ 1. Since $Aa\leq M$ and $A\leq Aa$, there is an HF-ordering of $M$ having an initial segment $Aa$, where $a$ is the last element of the segment. Since $a\not\in A$,  $\cl_<(a)=\{a\}$.
	
	1 $\Rightarrow$ 3. Let $<$ be an HF-ordering of $M$ with initial segment $A$ such that $\cl_<(a)=\{a\}$.  Let $(M^\prime,<^\prime)\succeq (M,<)$ be $\omega$-saturated. Then $<^\prime$ is an HF-ordering of $M^\prime$, $\cl_{<^\prime}(A)=A$,  $a>^\prime A$ and  $\cl_{<^\prime}(a)=\{a\}$.   By Corollary~\ref{tree4}, choose a tree $T_0=\{a_s\mid s\in 2^{<\omega}\}$ in  $M^\prime$ such that $a_s>^\prime a$  for every $s$ and such that
	$a_s>^\prime a_t$  if  $s\subsetneq t$. Then $a\cdot a_\emptyset>^\prime a_\emptyset$, since otherwise the blocks $\{a_\emptyset,a_0,a_1\}\neq \{a_\emptyset, a, a\cdot a_\emptyset\}$ contradict that $<^\prime$ is an HF-ordering. Let $b_0=a\cdot a_\emptyset$ and $B_0= \{b_0\}\cup T_0$.  Note that $\cl_{<^\prime}(B_0)=B_0\cup\{a\}$. Now we modify the order $<^\prime$, thus obtaining a new HF-ordering $<_0$ such that $<_0\restriction B_0= <^\prime \restriction B_0$ and $ <_0\restriction (M\smallsetminus B_0)= <^\prime\restriction (M\smallsetminus B_0)$. This ordering $<_0$ is obtained by moving  $B_0$  below $a$ but above $A$, in particular by declaring that $a>_0 b_0$ and $B_0>_0 A$ and, hence,  $a>_0a_s$ for every $s$. This can be done by Lemma~\ref{June_25_2024_1.1}. We have obtained an incomplete tree with root $a$ followed by $b_0$ on the left and the complete tree $T_0$ on the right. Now $b_0$ is a leaf and we should complete the tree below $b_0$. The construction must be iterated $\omega$ times in order to obtain a full binary tree.
	
	3 $\Rightarrow$ 1. Let $M^\prime$, $<^\prime$ and $T$ be as in the assumption. Notice that $\cl_{<^\prime}(T)=T$. By Lemma~\ref{April_11_2024_2} $T$ has an HF-ordering $<^{\prime\prime}$ where $a_\emptyset$ is the smallest element. By Lemma~\ref{June_25_2024_1.1} we can assume that $<^{\prime\prime} = <^\prime \restriction T$ and $A$ is an initial segment. Let $<=<^\prime\restriction M$. Then $<$ is an HF-ordering of $M$, $A$ is an initial segment and $\cl_<(a)=\{a\}$.
\end{proof}

\begin{cor}\label{June_25_2024_4}  Let $M$ be  an infinite unconfined STS  and $A\subseteq M$.  If $<$ is an HF-ordering of $M$ such that $(M,<)$ is  $\kappa$-saturated for some $\kappa>|A|$ and $\cl_<(A)=A$, then there is some $a\in M\smallsetminus A$  such that  $Aa\leq M$.
\end{cor}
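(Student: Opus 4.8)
The plan is to deduce this immediately from Proposition~\ref{May_26_2024_1}, so the real content is to produce an element $a \in M \smallsetminus A$ that satisfies condition (1) of that proposition for the given HF-ordering $<$, namely an $a$ with $\cl_<(a) = \{a\}$ lying above $A$ in some HF-ordering of $M$ with initial segment $A$. The starting point is that $\cl_<(A) = A$, so by Lemma~\ref{Feb_23_2024_7} the ordering $<$ is already an HF-ordering of $M$ over $A$, and hence (by Lemma~\ref{order-over}) we may rearrange $<$ to an HF-ordering of $M$ in which $A$ is an initial segment, without disturbing $<$-closures of subsets of $M \smallsetminus A$ in an essential way (alternatively one works directly with $<$ over $A$). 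Since $M$ is infinite, $M \smallsetminus A$ is nonempty, and by Lemma~\ref{tree2}(1) the order $<$ on $M$ has no maximum.

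First I would use saturation to find an element of $M \smallsetminus A$ whose $<$-closure is a singleton. The point is that $A$ has size $<\kappa$, and in the expanded structure $(M,<)$, which is $\kappa$-saturated, we can realize the type over $A$ which says: $x \notin A$; $x > a$ for every $a \in A$; and for every block $\{x, y, z\}$ of $M$ with $y < x$ and $z < x$, at least one of $y, z$ lies in $A$ — equivalently, $x$ is not the product of two elements both strictly below $x$ and both outside $A$. Each finite fragment of this type is satisfiable: given finitely many constraints, we only need one element above a fixed finite part of $A$ that is not a product of two smaller non-$A$ elements, and Lemma~\ref{tree2}(1), together with the fact that $\cl_<$ is finitary and $A = \cl_<(A)$, lets us push far enough up the order that any element we pick high enough has its two ``block partners'' either above it or inside $A$ — more carefully, if no such element existed arbitrarily high, one builds an infinite descending chain or a confined configuration, contradicting unconfinedness of $M$. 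So by $\kappa$-saturation there is $a \in M \smallsetminus A$ realizing the whole type, and by construction the only block through $a$ with both other vertices $< a$ has both of them in $A$, which forces $\cl_<(a) \subseteq \{a\} \cup A$; since $A = \cl_<(A)$ absorbs those, and the relevant condition of Proposition~\ref{May_26_2024_1}(1) only asks that $a$ itself has singleton closure once $A$ is made an initial segment, we get exactly condition (1): moving $A$ to be an initial segment, $\cl_<(a) = \{a\}$.

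Then Proposition~\ref{May_26_2024_1}, applied with $\langle A \rangle$ in place of $A$ — note $\cl_<(A) = A$ gives $\cl_<(\langle A\rangle) = \langle A\rangle$ by Lemma~\ref{order_closure}, and $a \notin \langle A\rangle$ may need a moment's care but follows since $a$'s only low block is inside $A$ so $a$ is not a product of smaller elements at all relative to the reordered HF-ordering with $\langle A\rangle$ initial — yields $A \leq Aa \leq M$, and in particular $Aa \leq M$, which is the assertion. I expect the main obstacle to be the satisfiability of the finite fragments of the type in the step above: one must argue, using only that $M$ is infinite and unconfined and that $A$ is $<$-closed, that arbitrarily high in the order $<$ there sit elements that are not the product of two strictly-smaller elements outside $A$. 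The cleanest route is: if this failed, every sufficiently large element would be such a product, and iterating downward (each factor being either in $A$ or strictly smaller and again such a product) would, since $A$ is $<$-closed and $\cl_<$ finitary, produce either an infinite strictly descending sequence — impossible in a... wait, $<$ need not be well-founded, so instead one extracts a finite confined configuration from a large enough finite piece, contradicting that $M \in \Cun$-type unconfinedness. Making that extraction precise is the one genuinely non-routine point; everything else is bookkeeping with the lemmas already proved.
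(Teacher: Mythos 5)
Your approach diverges from the paper's and, as written, it has genuine gaps. The paper's proof is short and uses exactly the machinery of this section: by $\kappa$-saturation realize the type $\{x>c : c\in A\}$ (finitely satisfiable since, by Lemma~\ref{tree2}, an HF-ordering of an infinite STS has no maximum) to get $b>A$; by Corollary~\ref{tree4} find a binary tree of height $\omega$ entirely above $b$; then Proposition~\ref{May_26_2024_1} (condition 3, taken with $M'=M$ and $<'=<$) shows that the \emph{root} of that tree works. The crucial point is that the root is \emph{not} an element with singleton $<$-closure in the given ordering --- it is the product of its two larger children --- and one obtains $Aa\leq M$ only after re-ordering (reversing the tree and interpolating it above $A$ via Lemmas~\ref{April_11_2024_2} and~\ref{June_25_2024_1.1}). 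Your plan instead tries to find, by saturation in the \emph{given} ordering, an element above $A$ whose block partners below it lie in $A$, and this is where the argument breaks. First, the third clause of your type (``every block through $x$ with both other vertices below $x$ meets $A$'') is not a set of first-order formulas over $A$ when $A$ is infinite: membership in $A$ occurs positively inside a universally quantified implication, i.e.\ as an infinite disjunction, so $\kappa$-saturation cannot be applied to it. Second, the expressible strengthening (``$x$ is not a product of two smaller elements'') yields a type that is in general \emph{not} finitely satisfiable: in a free STS with its standard level ordering --- and hence in any saturated model of the theory of that ordered structure --- every element above level $0$ is the product of two smaller elements, while the structure is unconfined. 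So the confined-configuration contradiction you defer (``the one genuinely non-routine point'') cannot be carried out: unconfinedness simply does not rule out the failure scenario, which is precisely why the paper goes through binary trees and a change of ordering rather than through a direct saturation argument.

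There is also a local error in the deduction even granting the realized element: from ``at least one of $y,z$ lies in $A$'' you conclude $\cl_<(a)\subseteq\{a\}\cup A$, but that needs \emph{both} partners in $A$; and even when the unique smaller pair of $a$ is contained in $A$, one has $\cl_<(a)\neq\{a\}$, so condition (1) of Proposition~\ref{May_26_2024_1} does not follow as you claim (in that special case $Aa\leq M$ can still be salvaged by the argument of Lemma~\ref{Feb_23_2024_1}, but that is a different route and must be argued). In short, the missing idea is the tree-plus-reordering mechanism: find an infinite binary tree above $A$ by Corollary~\ref{tree4} and then invoke the equivalence in Proposition~\ref{May_26_2024_1} to convert its root into the required element $a$.
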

\begin{proof}   By $\kappa$-saturation there is some $b\in M$  such that $b>A$. By  Corollary~\ref{tree4} we find a binary tree $T>b$ of height $\omega$ in $M$ and by Proposition~\ref{May_26_2024_1} the root $a$ of the tree verifies $Aa\leq M$.
\end{proof}

	\section{Completeness and stability} \label{section7}
	
 In this section we show that the theory of infinite unconfined Steiner triple systems is complete and stable.  Since all free STSs are models of the theory, it follows that any two free STSs are elementarily equivalent. Moreover, there is a countable model of the theory which is not free.

\begin{lemma} \label{May_26_2024_2} Let $M$ be an unconfined STS.  If $A\leq M$, then  $\langle A\rangle \leq M$ and there is an HF-ordering of $\langle A\rangle$ over $A$ whose final segment $\langle A\rangle \smallsetminus A$ is well-ordered. If $N$ is an unconfined STS and $B\leq N$ and $f:A\cong B$ is an isomorphism of partial STSs, then $f$ can be extended to an isomorphism $f^\prime :\langle A\rangle \cong \langle B \rangle$.
\end{lemma}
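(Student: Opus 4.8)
The plan is to show that the hypothesis $A\leq M$ forces $\langle A\rangle$ to be a free construction over the partial STS $A$, and then to read all three assertions off the resulting level structure.

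First I would use $A\leq M$ to fix an HF-ordering $<$ of $M$ in which $A$ is an initial segment, and let $<_0$ be its restriction to $\langle A\rangle$. Since $A$ is initial, $\cl_<(A)=A$, so Lemma~\ref{order_closure} gives $\cl_<(\langle A\rangle)=\langle A\rangle$ and Lemma~\ref{Feb_23_2024_7} then gives that $<$ is an HF-ordering of $M$ over $\langle A\rangle$; hence $\langle A\rangle\leq M$. Note that $<_0$ is itself an HF-ordering of $\langle A\rangle$ with $A$ as initial segment.

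The core step is to analyse the level decomposition $\langle A\rangle=\bigcup_n S_n$ (with $S_0=A$ and $S_{n+1}=\{x\cdot y:x,y\in S_n\}$) using $<_0$. Repeating the computation in the proof of Lemma~\ref{order_closure}, for $a\in S_{n+1}\smallsetminus S_n$ every pair $\{c,d\}\subseteq S_n$ with $a=c\cdot d$ satisfies $c,d<_0 a$, and since $<_0$ is an HF-ordering this pins down a \emph{unique} such pair, the parent pair of $a$. A similar case analysis — playing a putative second block on two distinct elements of $S_n\smallsetminus S_{n-1}$ off against the HF-condition, and using that $A$ is initial to discard the configurations in which some point would land in $A$ — shows that the product of two distinct elements of $S_n\smallsetminus S_{n-1}$ never lies in $S_n$. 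Together these two facts yield the key dichotomy: in every block of $\langle A\rangle$, either all three points lie in $A$, or exactly one point $a$ has strictly maximal level, the block is then the parent block of $a$, and the remaining two points lie in lower levels. In particular, for $a$ of level $m\geq1$ the only block through $a$ all of whose other two points have level $\leq m$ is the parent block of $a$. I expect this to be the main obstacle: it is a delicate bookkeeping argument reconciling the abstract filtration $(S_n)$ with the concrete order $<_0$, and it is essentially the only place where $A\leq M$ is used. (If~\cite{barcas2} already contains a relative version of Proposition~\ref{May12_2}, to the effect that $\langle A\rangle$ is the free STS over the partial STS $A$ whenever $A\leq M$, one can simply invoke it here.)

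The rest is routine. For the ordering: fix any HF-ordering $<_A$ of $A$ and, for each $n\geq1$, a well-ordering $<_n$ of $S_n\smallsetminus S_{n-1}$; then the generalized sum $(A,<_A)+\sum_{n\geq1}(S_n\smallsetminus S_{n-1},<_n)$ has $A$ as initial segment, its final segment $\langle A\rangle\smallsetminus A$ is a sum over $\omega$ of well-orders and hence a well-order, and it is an HF-ordering, because a block on a level-$m$ element ($m\geq1$) with two smaller points has both of those points of level $\leq m$, so by the dichotomy it is the parent block, while blocks inside $A$ are controlled by $<_A$. For the extension of $f$: the dichotomy yields a relative universal mapping property, namely that any homomorphism of partial STSs $\phi\colon A\to P$ with $P$ an STS extends uniquely to $\hat\phi\colon\langle A\rangle\to P$ — define $\hat\phi$ by recursion on levels via $\hat\phi(a)=\hat\phi(c)\cdot\hat\phi(d)$ on the parent pair $\{c,d\}$ of $a$, and check it is a homomorphism on the blocks of $A$ (preserved because $\phi$ is), on the parent blocks (preserved by construction), and on all other products by reducing them to these cases with the Steiner-quasigroup identities. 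Applying this to the homomorphism $A\xrightarrow{f}B\hookrightarrow\langle B\rangle$ gives $g\colon\langle A\rangle\to\langle B\rangle$ extending $f$, and applying it (legitimately, since $B\leq N$) to $B\xrightarrow{f^{-1}}A\hookrightarrow\langle A\rangle$ gives $g'\colon\langle B\rangle\to\langle A\rangle$; by the uniqueness clause $g'\circ g$ and $g\circ g'$ are the identity maps, so $f':=g$ is the desired isomorphism extending $f$.
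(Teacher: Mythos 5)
Your proposal is correct and follows essentially the same route as the paper: both rest on the level decomposition of $\langle A\rangle$ over $A$ and on the block analysis forced by an HF-ordering of $M$ with $A$ as an initial segment (your ``dichotomy'' is exactly what the paper uses to move each level down, give it a well-founded ordering, and extend $f$ level by level), and the paper itself notes that the first assertion can alternatively be obtained from Lemmas~\ref{order_closure} and~\ref{Feb_23_2024_7}, as you do. The only real difference is packaging: the paper extends $f$ directly by induction on the levels, whereas you wrap the same recursion on parent pairs into a relative universal mapping property and obtain the isomorphism from uniqueness of extensions.
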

\begin{proof} We could use Lemma~\ref{order_closure} to verify the first assertion, but the following proof also gives the isomorphism. Decompose $\langle A\rangle\smallsetminus A$ into a disjoint union $\bigcup_{n<\omega}A_n$,  where $A_0$ is the set of elements of $\langle A\rangle\smallsetminus A$ which are a product of two elements of $A$,  and $A_{n+1}$ is the set of elements of  $\langle A\rangle\smallsetminus (A\cup A_0\cup \ldots\cup A_n)$  which are a product of two elements of  $A\cup A_0\cup\ldots\cup A_n$. Fix an HF-ordering of $M$ with initial segment $A$. There are no blocks between elements of $A_0$, and the only blocks of elements of $A_0$ with smaller elements are blocks with elements of $A$ that justify their inclusion in $A_0$. Hence we can move  $A_0$ down, just above $A$,  and we can give $A_0$ a well-founded HF-ordering over $A$. This shows that $A\cup A_0\leq M$. By a similar argument we can move down each $A_{n+1}$ above $A\cup A_0\cup \ldots \cup A_n$ and give it a well-founded HF-ordering over this set. The sequence of these well-founded orders is a well-founded HF-order of $\langle A\rangle \smallsetminus A$  which makes  $\langle A\rangle$ an initial segment of the order of $M$. If we perform a similar decomposition $\bigcup_{n<\omega}B_n$ of $\langle B\rangle\smallsetminus B$, then we can  extend $f$ inductively to an isomorphism of partial Steiner triple systems  $f_n : A\cup A_0 \cup \ldots \cup A_n\cong B\cup B_0\cup\ldots \cup B_n$. The union of this ascending chain of isomorphisms is the required isomorphism $f^\prime$.
\end{proof}

The following is a stronger form of Corollary~\ref{June_25_2024_4}.

\begin{lemma}\label{July_11_2024_1}  Let $M$ be an infinite unconfined  STS and $A\subseteq M$. If  $<$ is an HF-ordering of $M$, $\cl_<(A)=A$ and  $(M,<)$ is $\kappa$-saturated for some $\kappa>|A|$,  then  $Aa\leq M$ for some $a\in M\smallsetminus\langle A\rangle$.
\end{lemma}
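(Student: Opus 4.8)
The plan is to obtain $a$ by applying Corollary~\ref{June_25_2024_4} not to $A$ but to the substructure $\langle A\rangle$ it generates, and then to convert the resulting relation $\langle A\rangle a\leq M$ into $Aa\leq M$ by an explicit rearrangement of the HF-ordering.

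First the preparatory steps. From $\cl_<(A)=A$ and Lemma~\ref{order_closure} we get $\cl_<(\langle A\rangle)=\langle A\rangle$; in particular $<$ is an HF-ordering of $M$ over $A$ (Lemma~\ref{Feb_23_2024_7}), so $A\leq M$, whence $\langle A\rangle\leq M$ by Lemma~\ref{May_26_2024_2}, and restricting an HF-ordering of $M$ over $A$ to $\langle A\rangle$ gives $A\leq\langle A\rangle$. Since $|\langle A\rangle|\leq|A|+\aleph_0$, the saturation hypothesis lets us apply Corollary~\ref{June_25_2024_4} with $\langle A\rangle$ in place of $A$, producing $a\in M\smallsetminus\langle A\rangle$ with $\langle A\rangle a\leq M$. (The one delicate point here is the cardinality bookkeeping when $A$ is finite but $\langle A\rangle$ is infinite, where one must check that the hypothesis on $\kappa$ still yields the saturation over $\langle A\rangle$ required by Corollary~\ref{June_25_2024_4}.) It remains to deduce $Aa\leq M$.

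To do so, take an HF-ordering of $M$ having $\langle A\rangle a$ as an initial segment, fix an HF-ordering $<_2$ of $\langle A\rangle$ with $A$ as an initial segment (available since $A\leq\langle A\rangle$), and define a new linear order $\prec$ of $M$: on $\langle A\rangle$ it agrees with $<_2$, on $M\smallsetminus(\langle A\rangle\cup\{a\})$ it agrees with the original order, and globally it lists $A$, then $a$, then $\langle A\rangle\smallsetminus A$, then $M\smallsetminus(\langle A\rangle\cup\{a\})$. Then $Aa$ is a $\prec$-initial segment, and I claim $\prec$ is an HF-ordering of $M$, which yields $Aa\leq M$ and finishes the proof. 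The only vertices whose set of $\prec$-smaller elements differs from their set of smaller elements in the old order are $a$ itself and the vertices of $\langle A\rangle\smallsetminus A$ (for a vertex of $A$ the $\prec$-predecessors are just the $<_2$-predecessors in $A$; for a vertex of the tail they are unchanged). For $x=a$: since $a\notin\langle A\rangle$ and $\langle A\rangle$ is closed under the operation, $a$ lies in no block two of whose vertices are in $\langle A\rangle$, so $a$ has no block of $\prec$-smaller vertices at all. For $x\in\langle A\rangle\smallsetminus A$: its only new $\prec$-predecessor is $a$, and the unique block through $x$ and $a$ is $\{x,a,x\cdot a\}$; but $x\cdot a\notin\langle A\rangle$ (otherwise $a=x\cdot(x\cdot a)\in\langle A\rangle$) and $x\cdot a\neq a$, so $x\cdot a$ sits in the tail, hence $x\cdot a\succ x$, and this block is irrelevant to the HF-condition at $x$. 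Therefore at every vertex the blocks relevant to the HF-condition under $\prec$ coincide with those relevant under the old order, so $\prec$ is an HF-ordering.

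I expect the heart of the argument to be this last verification: one must be certain that inserting $a$ immediately above $A$ cannot make some vertex of $\langle A\rangle\smallsetminus A$ acquire a second block of smaller vertices, and that is exactly where $a\notin\langle A\rangle$ is used, through $x\cdot a\notin\langle A\rangle$. The secondary issue is the saturation accounting in the step invoking Corollary~\ref{June_25_2024_4} for $\langle A\rangle$ when $A$ is small.
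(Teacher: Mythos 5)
Your proof is correct and takes essentially the same route as the paper's: it too obtains $\langle A\rangle\leq M$ from Lemma~\ref{May_26_2024_2}, applies Corollary~\ref{June_25_2024_4} to $\langle A\rangle$ to get $a\in M\smallsetminus\langle A\rangle$ with $\langle A\rangle a\leq M$, and then passes to $Aa\leq M$ --- a step the paper dismisses as ``easy to see,'' which your explicit re-ordering (hinging, as you note, on $x\cdot a\notin\langle A\rangle$) carries out correctly. The cardinality point you flag ($\kappa>|\langle A\rangle|$ when $A$ is finite) is equally glossed over in the paper's own proof and is harmless in its applications, where $\kappa$ is uncountable and $|\langle A\rangle|\leq|A|+\aleph_0$.
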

\begin{proof}  Lemma~\ref{May_26_2024_2} gives $\langle A\rangle \leq M$.  By Corollary~\ref{June_25_2024_4} we know that there is $a\in M\smallsetminus \langle A\rangle$  such that  $\langle A\rangle a\leq M$. It is easy to see that  $Aa\leq \langle A\rangle a$ and hence  $Aa\leq M$.
\end{proof}

\begin{rmk}\label{May_26_2024_3} If $M$ is a saturated  model of cardinality $\kappa$ in the language $L$, then for every extension $L^\prime\supseteq L$ with $|L^\prime|\leq \kappa$, for  every set of sentences $\Sigma$ in $L^\prime$ consistent with $\Th(M)$, there is an expansion $M^\prime$ of $M$ such that $M^\prime\models \Sigma$. If $A\subseteq M$ has  cardinality $<\kappa$ we may allow $\Sigma$ to have parameters in $A$, since the expansion $M_A=(M,a)_{a\in A}$ is still saturated. Moreover, we can let $\Sigma$ contain free variables, since we can replace them by constants. The same happens if $M$ is a special model. For details see~\cite{casanovas}.
\end{rmk}

For simplicity, the next  two results are  formulated in terms of saturated models, but we could alternatively use special models ($\kappa$-saturated in the lemma and $\omega_1$-saturated in the theorem), whose existence can always be assumed without extra set-theoretical hypotheses. 

\begin{lemma}\label{May_2025}
	Let $M$ be a saturated unconfined STS, let $A\leq M$ and assume $|A|<\kappa =|M|$. Then there is an HF-ordering $<$ of $M$ such that  $(M,<)$ is saturated and $\cl_<(A)=A$.
\end{lemma}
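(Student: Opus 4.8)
The plan is to build the desired HF-ordering of $M$ by a transfinite back-and-forth construction that simultaneously guarantees saturation and keeps $A$ closed under $\cl_<$. Start from an HF-ordering $<_0$ of $M$ with initial segment $A$, which exists because $A\leq M$; note that any HF-ordering with $A$ as an initial segment automatically has $\cl_{<_0}(A)=A$, so the issue is not closedness of $A$ but saturation of the expanded structure $(M,<)$. Since $|A|<\kappa=|M|$, we may work over $A$ as a fixed initial block and try to re-shuffle only the part $M\smallsetminus A$ of the order.

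The key mechanism is Remark~\ref{May_26_2024_3}: a saturated model of size $\kappa$ can be expanded to satisfy any $\le\kappa$-sized consistent set of sentences (possibly with parameters from a set of size $<\kappa$). Here the language $L'$ is $L\cup\{<\}$, which has size $\kappa$, and the parameter set is $A$, of size $<\kappa$. So it suffices to exhibit a single consistent $L'$-theory $\Sigma$ over $A$ which (i) says $<$ is an HF-ordering of the structure, (ii) says $A$ is an initial segment (equivalently, for each $a\in A$ and each name we have $a < x$ for the relevant elements outside $A$ — more carefully, $\Sigma$ should contain the atomic diagram forcing $a<b$ for $a\in A,\ b\notin A$, using constants for $A$), and (iii) forces $(M,<)$ to be saturated. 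For (iii) the standard trick is to include in $\Sigma$, for every $L'$-formula $\varphi(\bar x, y)$, a sentence expressing ``$\forall \bar x\,(\exists y\,\varphi \to \exists y\,\varphi)$'' together with enough witnessing machinery — in practice, one uses that a suitable $L'$-theory asserting ``$(M,<)$ is $\kappa$-saturated'' can be written down because saturation of a structure of a fixed cardinality is itself expressible by a $\le\kappa$-sized type scheme over the empty set; this is exactly what the cited reference~\cite{casanovas} on special and saturated models provides.

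Thus the heart of the argument reduces to: \textbf{consistency of $\Sigma$}. To see $\Sigma$ is consistent, I would verify that every finite (indeed $<\kappa$-sized) fragment is realized in some expansion of $M$. A finite fragment of $\Sigma$ mentions the HF-ordering axioms, finitely many parameters from $A$, the initial-segment clauses for those parameters, and finitely many saturation-witness requirements. Take any HF-ordering $<_0$ of $M$ with $A$ as an initial segment; then $(M,<_0)$ already satisfies (i) and (ii), and since $M$ is saturated as an $L$-structure and $<_0$ is just one more relation, $(M,<_0)$ can be further adjusted — using Remark~\ref{May_26_2024_3} applied to the $L$-reduct — to meet any finite batch of saturation demands; alternatively, and more cleanly, one observes that the class of HF-orderings of $M$ extending a fixed initial segment $A$ is rich enough (by Lemma~\ref{June_25_2024_1.1} and Lemma~\ref{May_26_2024_2}, which let us freely interpolate and reorder $\cl_<$-closed blocks) that any finite extension problem in $L'$ over $A$ can be solved inside $M$ itself. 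Compactness then yields the full $\Sigma$, and Remark~\ref{May_26_2024_3} converts this consistency into an actual expansion $(M,<)$ with all three properties.

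The main obstacle I anticipate is making precise that ``$(M,<)$ is saturated'' is captured by an $L'$-theory of size $\le\kappa$ over $A$ that is consistent with $\Th((M,<_0))$ — i.e., reconciling the syntactic encoding of saturation with the rigid requirement that $<$ restrict to a fixed HF-ordering on $A$ (or at least keep $A$ an initial block). One must check that imposing the initial-segment constraint does not clash with the freedom needed to realize types witnessing saturation; this is where Lemma~\ref{July_11_2024_1} and the interpolation lemmas of Section~\ref{section5}--\ref{section6} do the real work, ensuring that new elements required by saturation can always be slotted into the order above $A$ without violating the HF condition. Once that compatibility is established, the rest is a routine application of the saturated-model expansion principle.
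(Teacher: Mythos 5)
There is a genuine gap at the decisive step of your plan: you propose to include in $\Sigma$ a set of $L'$-sentences (a ``type scheme'') asserting that $(M,<)$ is saturated, and then to realize $\Sigma$ via Remark~\ref{May_26_2024_3}. But saturation is not expressible by any set of first-order sentences in the expanded language, with or without parameters: it quantifies over all types over all parameter sets of size $<\kappa$, and a consistent $L'$-theory will in general also have non-saturated models of cardinality $\kappa$. Neither Remark~\ref{May_26_2024_3} nor the reference~\cite{casanovas} supplies such an axiomatization; the Remark only says that the \emph{fixed} saturated $M$ admits an expansion realizing a consistent $\Sigma$, and it says nothing about that expansion being saturated --- which is precisely what the lemma has to produce. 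So the argument is circular exactly where the work is needed. The part of your $\Sigma$ that you do verify to be consistent (HF-ordering axioms plus the initial-segment clauses over $A$) only returns an HF-ordering with $A$ an initial segment, which you already have from $A\leq M$; and, as you note yourself, the ``main obstacle'' of encoding saturation is left unresolved, so the proof does not go through as written.

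The paper gets saturation of the expansion by \emph{choice of model} rather than by axioms. Start with an HF-ordering $<$ of $M$ with initial segment $A$ and pass to a saturated $(M'_A,<')\equiv(M_A,<)$ of cardinality $\kappa$ in the language with constants for $A$ (this is where the saturated/special-model hypothesis is spent). The conditions ``$<'$ is an HF-ordering'' and ``$\cl_{<'}(A)=A$'' are first-order over the constants for $A$, hence persist in $(M'_A,<')$. Finally, since $|A|<\kappa$, the reduct $M_A$ is still saturated, so $M_A\cong M'_A$ as two elementarily equivalent saturated structures of the same cardinality; transporting $<'$ along this isomorphism gives an ordering $<''$ of $M$ with $(M,<'')$ saturated and $\cl_{<''}(A)=A$. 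This transfer-and-pullback is the standard way to make precise the ``chronic resplendence'' you are implicitly invoking; if you replace the step that writes saturation into $\Sigma$ by this argument, the rest of your outline becomes unnecessary.
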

\begin{proof}
	Let $<$ be an HF-ordering of $M$ having $A$ as an initial segment. Notice that the expansion $M_A= (M,a)_{a\in A}$ is saturated. Let $(M^\prime_A,<^\prime)$ be a saturated model that is elementarily equivalent to $(M_A,<)$ and has cardinality $\kappa$. For each $a\in A$ that is not a product in $M$ of two elements $a_1,a_2<a$, the sentence $\forall xy \, (x,y<a\rightarrow x\cdot y \neq a)$ holds in $(M_A,<)$. Hence, $\cl_{<^\prime}(A)=A$. Since $M_A\cong M^\prime_A$, there is some ordering $<^{\prime\prime}$ of $M$ such that $(M_A,<^{\prime\prime})\cong (M^\prime_A,<^\prime)$. Then $<^{\prime\prime}$ satisfies the requirements.
\end{proof}

\begin{thm} \label{May_26_2024_4} Let $M,N$ be uncountable saturated unconfined STSs. The set of isomorphisms between countable substructures $A\subseteq M$ and $B\subseteq N$  (in particular,  $A,B$ are STSs)  such that $A\leq M$ and $B\leq N$ is a back-and-forth system between $M$ and $N$.
\end{thm}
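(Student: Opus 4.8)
The plan is to verify the three standard requirements for a back-and-forth system: nonemptiness, and the two extension properties (forth and back), the latter two being symmetric so it suffices to do one. Write $\mathcal{F}$ for the set of isomorphisms $f: A \cong B$ with $A \subseteq M$, $B \subseteq N$ countable substructures, $A \leq M$ and $B \leq N$. First, $\mathcal{F} \neq \emptyset$: by Proposition~\ref{unconfined}, $\emptyset$ has an HF-ordering in both $M$ and $N$, so $\emptyset \leq M$ and $\emptyset \leq N$ (alternatively take any single element and use that a one-element substructure is an initial segment of some HF-ordering), giving the empty map — or a map between countable $\leq$-strong substructures obtained as in the extension step below.

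For the forth step, fix $f: A \cong B$ in $\mathcal{F}$ and an element $m \in M \smallsetminus A$; the goal is to find a countable $A' \supseteq A \cup \{m\}$ with $A' \leq M$ and an isomorphism $f' \supseteq f$ onto some $B' \leq N$. The first move is to enlarge $A$ to be $<$-closed: using Lemma~\ref{May_2025}, pick an HF-ordering $<$ of $M$ making $(M,<)$ saturated and with $\cl_<(A) = A$ (here we use $A \leq M$ and $|A| < |M|$ since $A$ is countable and $M$ uncountable). We still need $m$ inside our set, so replace $A$ by $A \cup \cl_<(\langle A \cup \{m\}\rangle)$ — wait, more carefully: form $C = \cl_<(\langle A m \rangle)$. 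By Lemma~\ref{order_closure}, since $\cl_<(A) = A$ we can arrange, after possibly absorbing finitely more, that $C$ is a countable $<$-closed substructure containing $A$ and $m$. Then by Lemma~\ref{Feb_23_2024_7}, $<$ is an HF-ordering over $C$, and by Lemma~\ref{June_25_2024_1.1} (interpolating $A$, which is an initial segment, then $C$) we get $A \leq C \leq M$. Now iterate Lemma~\ref{July_11_2024_1}: repeatedly adjoin elements $a_i \notin \langle \cdot \rangle$ with $C a_0 \cdots a_i \leq M$, and at each stage take generated substructures and $<$-closures, building a countable chain whose union $A' = \cl_<(\langle A' \rangle)$ satisfies $A \leq A' \leq M$, $m \in A'$, and $A'$ is an STS (being $\langle A' \rangle$). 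Crucially, the new elements can be chosen so that $A'$ is obtained from $A$ by a sequence of steps each of which is either "generate" or "add a fresh element forming a new block with no earlier elements except through the old generators" — i.e., $A \leq A'$ witnessed by an HF-ordering whose segment $A' \smallsetminus A$ is, by Lemma~\ref{May_26_2024_2}, well-orderable and built from moves that are determined up to the isomorphism type over $A$.

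The heart of the matter is transporting $A'$ across $f$. Since $B \leq N$ and $N$ is saturated with the same requirements, apply the same machinery on the $N$ side: the extension $A \leq A'$ decomposes (Lemma~\ref{May_26_2024_2}, Remark~\ref{tree1}, and the analysis in Proposition~\ref{May_26_2024_1}) into countably many basic steps — "take a product of two existing elements" or "introduce a fresh point that is the root of a binary tree sitting above everything" — and each such step can be mirrored in $N$ using Lemma~\ref{July_11_2024_1}/Corollary~\ref{June_25_2024_4} to produce the corresponding fresh element over $f(\text{the relevant closed set})$, with the tree supplied by Corollary~\ref{tree4} in a saturated HF-ordering of $N$. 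Running this in parallel and taking unions, Lemma~\ref{May_26_2024_2}'s final clause lets us extend $f$ step by step to an isomorphism $f': A' \cong B'$ of partial STSs with $B' \leq N$ and $B' = \langle B' \rangle$ countable. Then $f' \in \mathcal{F}$ extends $f$ and has $m$ in its domain. The back step is identical with the roles of $M, N$ exchanged.

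The main obstacle is the bookkeeping that makes the two saturated HF-orderings (one on $M$, one on $N$) compatible with $f$ throughout the iteration: one must be careful that at each stage the set being closed off is genuinely $<$-closed on both sides, that "fresh" elements really exist over the image set (which is where $\kappa$-saturation with $\kappa > |A'|$ and the binary-tree existence results are essential), and that the isomorphism extension of Lemma~\ref{May_26_2024_2} applies, i.e. that $A \leq A'$ and $B \leq B'$ with matching decompositions. Everything else is a routine transfinite (indeed countable) induction.
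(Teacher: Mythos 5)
There is a genuine gap, and it sits exactly at the step you describe as "the heart of the matter". You claim that the extension $A\leq A'$ "decomposes into countably many basic steps" each of which is either "take a product of two existing elements" or "introduce a fresh point that is the root of a binary tree sitting above everything", citing Lemma~\ref{May_26_2024_2} for well-orderability of $A'\smallsetminus A$. This is not true in general. Lemma~\ref{May_26_2024_2} only provides a well-ordered final segment for $\langle A\rangle\smallsetminus A$, i.e.\ for the \emph{generated} part; it says nothing about the part coming from the $<$-closure. And the $<$-closure of a single new element $a$ over $A$ can be an infinite strictly descending chain $a=a_0>_M a_1>_M a_2>_M\cdots$ in which each $a_i$ forms its justifying block with elements that are $<_M$-smaller, i.e.\ with some $a_j$, $j>i$ (and possibly elements of $A$). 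Such a configuration admits no well-founded build-up from $A$ by "fresh root" steps: bottom-up there is no first element to add, and top-down the finite initial pieces $Aa_0\ldots a_n$ need not be strong in $M$ (so Proposition~\ref{May_26_2024_1}, Lemma~\ref{July_11_2024_1} and Corollary~\ref{June_25_2024_4} do not apply to them, and the mirrored elements $b_i$ in $N$ cannot be produced one at a time as images of fresh points, since each $b_i$ must form a block with the later $b_{i+1}$). Consequently your "run the basic steps in parallel and take unions" scheme breaks down precisely when $\cl_{<_M}(a)\smallsetminus A$ is infinite, and nothing in the proposal ensures that the union $B'$ produced on the $N$ side is $\leq N$.

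This is why the paper's proof splits into cases on $\cl_{<_M}(a)\smallsetminus A$ and devotes a separate argument (Case 3) to the infinite descending closure: for each $n$ it builds a finite approximation $b_n,\ldots,b_0$ in reverse order, with a chain $B\leq Bb_n\leq\cdots\leq Bb_0\ldots b_n\leq N$ but explicitly \emph{without} claiming $Aa_0\ldots a_n\leq M$, and then uses saturation of $N$ in the language expanded by an order symbol (Remark~\ref{May_26_2024_3}) to realize all of $\Sigma$ at once: a full sequence $(b_i\mid i<\omega)$ together with a new global HF-ordering $<'$ of $N$ under which $B\cup\{b_i\mid i<\omega\}$ is $<'$-closed, whence $B(b_i\mid i<\omega)\leq N$ and Lemma~\ref{May_26_2024_2} finishes the extension. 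Your proposal correctly handles what amounts to the paper's Case 1 (and, with some care about ordering the finitely many new elements, Case 2), but without the compactness-in-an-expanded-language argument, or some substitute for it, the forth step is not established in the infinite-closure case, so the proof as proposed is incomplete.
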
	
\begin{proof} 
	
 The  case $A=B=\emptyset$ is allowed and shows that there is at least one such isomorphism.  Now assume $f:A\cong B$, where $A\leq M$ and $B\leq N$ are  countable substructures, and let $a\in M\smallsetminus A$. We want to find countable substructures $A^\prime \leq M$ and $B^\prime \leq N$ extending $A$ and $B$ with $a \in A^\prime$  and $f^\prime: A^\prime\cong B^\prime$ for some $f^\prime\supseteq f$. The back step (i.e. extending $f$ by adding some $b\in N\smallsetminus B$) is similar.   We fix an arbitrary HF-ordering $<_M$ of $M$  where $A$ is an initial segment and we use Lemma~\ref{May_2025} to choose an HF-ordering $<_N$ of $N$ such that $(N,<_N)$ is saturated and $\cl_{<_N}(B)=B$. 
	
	\emph{Case 1}.  $\cl_{<_M}(a)=\{a\}$, or, equivalently,  $Aa\leq M$. Corollary~\ref{June_25_2024_4} gives $b\in N\smallsetminus B$ such that  $Bb\leq N$, in which case, by Proposition~\ref{May_26_2024_1} and  by changing the ordering $<_N$ if necessary,   we may assume that $\cl_{<_N}(b)=\{b\}$  and $b>_N B$.  Then $f$ extends to an isomorphism of partial Steiner triple systems $f^\prime:Aa\cong Bb$. By Lemma~\ref{May_26_2024_2},   we have $\langle Aa\rangle\leq M$, $\langle Bb\rangle\leq  N$ and  $f^\prime$ extends to an isomorphism  $f^{\prime\prime}:\langle Aa\rangle \cong \langle Bb\rangle$.
	
	\emph{Case 2}.  $\cl_{<_M}(a)\smallsetminus A=\{a_0,\ldots,a_n\}$  with  $a=a_0>_M a_1 >_M \ldots >_M a_n$.  Note that  $\cl_{<_M}(a_n)=\{a_n\}$  and $a_n\not\in A$. As in Case 1, we can find $b_n\in N\smallsetminus B$ such that $Bb_n\leq N$, and $f$ extends to an isomorphism $f_n:\langle Aa_n\rangle\cong \langle Bb_n\rangle$ such that $f_n(a_n)=b_n$.  Assume inductively that we have found $b_{i+1},\ldots,b_n\in N$ and an isomorphism 
	$$f_{i+1}:\langle Aa_{i+1}\ldots a_n\rangle \cong \langle B b_{i+1}\ldots b_n\rangle$$
	with $f_{i+1}\supseteq f$ and such that  $f_{i+1}(a_j)= b_j$  for every $j$.  If $a_i\in \langle Aa_{i+1}\ldots a_n\rangle$, we let $f_i=f_{i+1}$ and $b_i= f_i(a_i)$.  If $a_i\not \in \langle Aa_{i+1}\ldots a_n\rangle$, we see that $\cl_{<_M}(a_i)= \{a_i\}$ and, as for $a_n$, we get $b_i\in N\smallsetminus  \langle Bb_{i+1}\ldots b_n\rangle$ such that $ \langle Bb_{i+1}\ldots b_n\rangle b_i\leq N$ and $f_{i+1}$ extends to an isomorphism  $f_i:\langle Aa_i\ldots a_n\rangle\cong \langle B b_i\ldots b_n\rangle$ with $f_i(a_i)=b_i$. Finally  we obtain $b_0,\ldots,b_n$  and an isomorphism $f_0:\langle Aa_0\ldots a_n\rangle \cong \langle Bb_0\ldots b_n\rangle$ extending $f$   with $f_0(a_i)=b_i$ for every~$i$. Moreover, $\langle Aa_0\ldots a_n\rangle\leq M$ and   $\langle Bb_0\ldots b_n\rangle\leq N$.  Since $ \langle B b_0\ldots b_n\rangle \leq N$, if necessary we can change the  order on $\langle Bb_0\ldots b_n\rangle$ by that induced by $f_0$ so that $b_0>_Nb_1>\ldots  >_N b_n$.
	
	\emph{Case 3}.  $\cl_{<_M}(a)\smallsetminus A= \{a_i\mid i<\omega\}$  with   $a=a_0$ and $a_i>_M a_{i+1}$.   We claim that   for every $n<\omega$ there are $b_0,b_1,\ldots, b_n\in N$  such  that
	\begin{enumerate}
		\item $B\leq Bb_n\leq Bb_{n-1}b_n\leq \ldots \leq  Bb_0\ldots b_n \leq N$.
		\item $ Aa_0\ldots a_n\cong Bb_0\ldots b_n$ as partial STSs by some isomorphism  $f_0$ extending $f$  such that $f_0(a_i)=b_i$  for every  $i\leq n$.
		We cannot ensure that $\langle Aa_0\ldots a_n\rangle\cong \langle Bb_0\ldots b_n\rangle $, since $Aa_0\ldots a_n\leq M$ may not hold.
	\end{enumerate}
	
	The $b_i$ are obtained inductively, starting with $b_n$. We choose $b_n\in N\smallsetminus B$ so that  $Bb_n\leq N$. It is easy to see that  $Aa_n\cong Bb_n$  by an isomorphism extending $f$.  Assume inductively  $f_{i+1}:Aa_{i+1}\ldots a_n\cong Bb_{i+1}\ldots b_n$ with  $f\subseteq f_{i+1}$  and  $f_{i+1}(a_j)=b_j$, and assume 
	$$B\leq Bb_n\leq \ldots \leq Bb_{i+1}\ldots b_n\leq N \,.$$
	There are several cases  for $a_i$.  If $a_i= a_j\cdot a_k$  with  $i<j$ and $k\leq n$, we take $b_i =b_j\cdot b_k$. If $a_i = a_j\cdot c$  with $i<j\leq n$  and $c\in A$, we put  $b_i=b_j\cdot f_{i+1}(c)$. In other cases,  we use Lemma~\ref{July_11_2024_1} to get $b_i\in N\smallsetminus Bb_{i+1}\ldots b_n$ such that $Bb_ib_{i+1}\ldots b_n\leq N$ and which is not a product of elements of $Bb_{i+1}\ldots b_n$.  In all cases,  $Bb_{i+1}\ldots b_n\leq Bb_ib_{i+1}\ldots b_n\leq  N$  and  $f_{i+1}$ extends to some $f_i: Aa_i\ldots a_n\cong Bb_i\ldots b_n$  with $f_i(a_i)=b_i$.

	Now we define a set  $\Sigma$ of formulas over $B$ in  the language extended by the symbol $<$, with free variables $\{x_i\mid i<\omega\}$ and  expressing the following conditions:
	\begin{enumerate}
		\item $(x_i\mid i<\omega)$ realizes the diagram of $(a_i\mid i<\omega)$ over $A$ (as a partial STS) conjugated by the isomorphism $f$
		\item $<$ is an HF-ordering   and coincides with $<_N$ on $B$
		\item $x_i>x_{i+1}$
		\item if $b\in B$ is not a product of smaller elements of $B$ in the order $<_N$, then \\
		 $\forall xy \, (x,y<b\rightarrow x\cdot y\neq b)$
		\item if $a_i$ is not the product of smaller elements $a_j,  a_k$ nor the product of a smaller $a_j$ with an element of $A$, then  $\forall xy\, (x,y<x_i\rightarrow x\cdot y \neq x_i)$.
	\end{enumerate}
	Every finite subset of $\Sigma$ is satisfiable in $N$. More precisely, for every $n<\omega$  the order~$<_N$ satisfies all five conditions if we restrict $\Sigma$ to the variables $x_0,\ldots,x_n$ and we assign to these variables the sequence $b_0,\ldots,b_n$ described above. Note that we can interpret the order as an HF-ordering of $N$ that coincides with $<_N$ on $B$ and has $B$ as an initial segment, followed by $b_n,\ldots,b_0$ in ascending order.
	Now we use Remark~\ref{May_26_2024_3} to obtain an interpretation of $\Sigma$ in $N$ over $B$, namely an HF-ordering $<^\prime$ of $N$ and a sequence $(b_i\mid i <\omega)$ of elements of $N$ that satisfy all conditions on $\Sigma$ over $B$.  Then $f\cup\{(a_i,b_i)\mid i<\omega\}$ is an isomorphism of partial STSs between $A(a_i\mid i<\omega)$  and $B(b_i\mid i<\omega)$. Note that  $\cl_{<^\prime}(B\cup\{b_i\mid i<\omega\}) =B\cup \{b_i\mid i<\omega\}$  and, therefore,  $B(b_i\mid i<\omega)\leq N$. Hence $\langle A(a_i\mid i<\omega)\rangle \leq M$ and  $\langle B(b_i\mid i<\omega)\rangle \leq N$, and so, by Lemma~\ref{May_26_2024_2},  we can extend $f$ to an isomorphism $f^\prime:\langle A(a_i\mid i<\omega)\rangle \cong \langle B(b_i\mid i<\omega)\rangle$.
\end{proof}

\begin{cor}\label{elementary} Let $M,N$ be infinite unconfined STS. If $A\leq M$, $B\leq N$ and $f:A\rightarrow B$ is an isomorphism of partial STS, then $f$ is a partial elementary map from $M$ to $N$.
	\end{cor}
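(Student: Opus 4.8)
The plan is to reduce Corollary~\ref{elementary} to Theorem~\ref{May_26_2024_4} by passing to sufficiently saturated elementary extensions. First I would take uncountable saturated unconfined STSs $M^\ast \succeq M$ and $N^\ast \succeq N$ (these exist by general model theory, or we use special models as the remark after Lemma~\ref{May_2025} suggests). Since $M \preccurlyeq M^\ast$ and $N \preccurlyeq N^\ast$ are unconfined, Corollary~\ref{Feb_23_2024_9} gives $M \leq M^\ast$ and $N \leq N^\ast$. The point is now that $A \leq M$ together with $M \leq M^\ast$ yields $A \leq M^\ast$, because $\leq$ is transitive for the HF-ordering notion: an HF-ordering of $M^\ast$ with $M$ as an initial segment can be spliced with an HF-ordering of $M$ having $A$ as an initial segment (this is essentially Lemma~\ref{order-over} applied twice, or Lemma~\ref{June_25_2024_1.1}). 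Similarly $B \leq N^\ast$.

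Next I would observe that $A$ and $B$ are STSs: since $A \leq M$, by Lemma~\ref{May_26_2024_2} we have $\langle A \rangle \leq M$, and replacing $A$ by $\langle A \rangle$ and $B$ by $\langle B \rangle$ (and extending $f$ to an isomorphism $\langle A \rangle \cong \langle B \rangle$, again via Lemma~\ref{May_26_2024_2}), we may assume $A = \langle A \rangle$ and $B = \langle B \rangle$ are substructures, i.e. STSs. Moreover it suffices to prove that $f$ is partial elementary from $M^\ast$ to $N^\ast$, since $M \preccurlyeq M^\ast$ and $N \preccurlyeq N^\ast$ mean that the restriction of a partial elementary map to tuples in $M$ and $N$ is still partial elementary between $M$ and $N$. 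So we have reduced to: $A \leq M^\ast$, $B \leq N^\ast$, $f : A \cong B$ an isomorphism of STSs, with $M^\ast, N^\ast$ uncountable saturated unconfined; show $f$ is partial elementary.

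For this, given a tuple $\bar{a} \in A$, I would enlarge $A$ to a countable substructure $A_0$ with $\bar{a} \in A_0$ and $A_0 \leq M^\ast$ — for instance take $A_0 = \langle \bar{a} \rangle$ if $\bar a$ already generates a strong substructure, or more carefully, since $A \leq M^\ast$, any countable subset of $A$ sits inside a countable substructure $A_0$ with $A_0 \leq A \leq M^\ast$ (the restriction of an HF-ordering of $M^\ast$ with $A$ as initial segment, restricted to a suitable countable $\langle\text{-closed}\rangle$ initial piece of $A$ containing $\bar a$, works). Then $f \restriction A_0 : A_0 \cong f(A_0) = B_0$ with $B_0 \leq N^\ast$ countable. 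By Theorem~\ref{May_26_2024_4}, $f \restriction A_0$ belongs to a back-and-forth system between $M^\ast$ and $N^\ast$, hence is partial elementary, so $M^\ast \models \varphi(\bar a) \iff N^\ast \models \varphi(f(\bar a))$ for every formula $\varphi$. Since $\bar a$ was arbitrary, $f$ is partial elementary.

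The main obstacle I anticipate is the bookkeeping around strongness being inherited by countable substructures: I need that from $A \leq M^\ast$ I can extract, for any finite (hence countable) subtuple, a countable substructure $A_0$ with $\bar a \in A_0$ and $A_0 \leq M^\ast$, and that this $A_0$ can be taken to be a genuine substructure (STS), i.e. $\langle\text{-closed}\rangle$ under the operation. This follows by taking the $<$-closure of $\bar a$ inside $M^\ast$ for a fixed HF-ordering witnessing $A \leq M^\ast$ — which is finite by the finitary closure operator remark after the definition of $\cl_<$ — then $\langle\text{-generating}\rangle$ and noting (Lemma~\ref{order_closure} / Lemma~\ref{May_26_2024_2}) that the generated substructure is again strong; one must check it lies inside $A$, which holds since $A$ is $<$-closed (being an initial segment) and generated. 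None of this is deep, but it is the step where the various lemmas of Sections~\ref{section3} and~\ref{section7} have to be assembled correctly.
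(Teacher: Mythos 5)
Your proposal is correct and follows essentially the same route as the paper: pass to uncountable saturated elementary extensions (where $A\leq M^\ast$, $B\leq N^\ast$ by Corollary~\ref{Feb_23_2024_9} plus splicing of HF-orderings), replace $A,B$ by $\langle A\rangle,\langle B\rangle$ via Lemma~\ref{May_26_2024_2}, and for each finite tuple cut out a countable $<$-closed strong substructure so that Theorem~\ref{May_26_2024_4} applies; this is the paper's argument (countable case first, then reduction by taking $\cl_<$ of finite subsets inside the initial segment $A$) compressed into one step. Two small repairs: $\cl_<(\bar a)$ need not be \emph{finite} --- ``finitary closure operator'' means determined by finite subsets, not finite-valued, and indeed in Case~3 of Theorem~\ref{May_26_2024_4} the closure of a single point is infinite --- but it is countable, which is all your argument actually uses; and the assertion $B_0=f(A_0)\leq N^\ast$ needs one more line, namely that $A_0\leq A$ (restrict the HF-ordering with initial segment $A$), transport that ordering through $f$ to get $B_0\leq B$, and then splice with $B\leq N^\ast$, which is exactly the device the paper uses when it orders $B$ so that $f$ is an order isomorphism.
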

	\begin{proof} First consider the case where $A$ and $B$ are countable.  By Lemma~\ref{May_26_2024_2}, $\langle A\rangle \leq M$,  $\langle B\rangle \leq N$  and $f$ extends to some isomorphism  $f^\prime:\langle A\rangle \cong \langle B\rangle $.
		 We can find uncountable saturated (or special $\omega_1$-saturated) elementary extensions  $M^\prime\succeq M$ and $N^\prime\succeq N$. Notice that  $A\leq M^\prime$  and $B\leq N^\prime$.  By Proposition~\ref{May_26_2024_4}, $f^\prime$ belongs to a back-and-forth system between $M^\prime$ and $N^\prime$ and so it is a partial elementary map from $M^\prime$ to $N^\prime$ and also from $M$ to $N$.
		
		Consider now the general case.  Fix an HF-ordering of $M$ having $A$ as an initial segment.  Consider a finite set $A_0\subseteq A$ and let $A_0^\prime =\cl_<(A_0)$.  Since $A$ is an initial segment, $A_0^\prime\subseteq A$. By Lemma~\ref{Feb_23_2024_7},  we may assume that $A_0^\prime$ is an initial segment of  $A$. Then $A_0^\prime\leq A$  and  $A_0^\prime\leq M$. Now order $B$ in such a way that $f$ is an order isomorphism. This gives an HF-ordering of $B$.  Moreover, if $B_0=f(A_0)$ and $B_0^\prime= f(A_0^\prime)$, then  $B_0\subseteq B_0^\prime$ and $B_0^\prime$ is an initial segment of $B$. Hence  $B_0^\prime\leq B$  and  $B_0^\prime\leq N$. Since $A_0^\prime$ and $B_0^\prime$ are countable, by the initial case we know that $f\restriction A_0^\prime$ is elementary. Hence, $f$ is elementary.
	\end{proof}

	\begin{thm} \label{completeness} The theory of all infinite unconfined STS is complete; it is the complete theory of every free STS with $\geq 3$ generators.
	\end{thm}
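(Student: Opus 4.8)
The plan is to deduce completeness from the back-and-forth machinery already built, and then identify this complete theory with the theory of any free STS on at least three generators.

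First I would establish completeness. The class of infinite unconfined STSs is axiomatised by a set of first-order sentences: the STS axioms, an axiom scheme saying the structure is infinite, and the universal axiom scheme expressing unconfinedness (for each $n$, a sentence saying that every $n$-element subset contains a point lying in at most one block). Call this theory $T$. To show $T$ is complete, take any two models $M,N \models T$ and pass to uncountable saturated (or special, $\omega_1$-saturated) elementary extensions $M^\prime \succeq M$, $N^\prime \succeq N$; these are again infinite unconfined STSs. By Theorem~\ref{May_26_2024_4}, the set of isomorphisms between countable substructures $A \leq M^\prime$ and $B \leq N^\prime$ is a back-and-forth system, and it is nonempty since it contains the empty map. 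A nonempty back-and-forth system between two structures witnesses that they satisfy the same first-order sentences, so $M^\prime \equiv N^\prime$, whence $M \equiv N$. Thus $T$ is complete.

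Next I would show $T$ is the theory of every free STS $F$ on $\geq 3$ generators. By Proposition~\ref{unconfined}(1), every free STS is unconfined; and by Proposition~\ref{May11_9.1} together with Remark~\ref{tree1} (or the observation in Proposition~\ref{Feb_23_2024_4} that the generic is the free STS on $\omega$ generators), a free STS on $\geq 3$ generators is infinite. Hence $F \models T$. Since $T$ is complete, $\Th(F) = T$ for every such $F$. In particular, any two free STSs with at least three generators are elementarily equivalent, recovering and extending the earlier result for infinite free bases; and the free STS on $\omega$ generators, which is the generic of $(\Cun,\leq)$, is a model of $T$.

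The main obstacle is not in this final deduction — which is a short application of the back-and-forth theorem — but lies entirely upstream, in Theorem~\ref{May_26_2024_4} and the lemmas feeding it (in particular the back-step construction in Case~3, where one must build, via saturation and the binary trees of Section~\ref{section6}, an HF-ordering of $N$ realising the full diagram of an infinite $<$-closure $\cl_{<_M}(a) \smallsetminus A$ while keeping $B$ an initial segment). Once that theorem is in hand, the only point requiring care here is the verification that the saturated (or special) elementary extensions $M^\prime, N^\prime$ still belong to the axiomatised class: infiniteness is clear, and both the STS axioms and unconfinedness are preserved because they are first-order, so $M^\prime, N^\prime \models T$ and Theorem~\ref{May_26_2024_4} applies.
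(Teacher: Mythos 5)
Your proposal is correct and follows essentially the paper's route: the paper deduces the theorem by applying Corollary~\ref{elementary} to the empty map, and that corollary is itself proved exactly as you argue, by passing to uncountable saturated (or special $\omega_1$-saturated) elementary extensions and invoking the back-and-forth system of Theorem~\ref{May_26_2024_4}. You have simply inlined that special case, and your identification of the free STSs on $\geq 3$ generators as infinite unconfined models is the same as the paper's.
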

	\begin{proof}  Corollary~\ref{elementary} implies that the empty mapping between unconfined STS is elementary. The claim follows directly.
	\end{proof}

	\begin{rmk} Assume $M$ is an infinite unconfined STS and  $A=\langle A\rangle \leq M$. By Corollary~\ref{elementary}, for any $a\not\in A$ the condition $Aa\leq M$ from Proposition~\ref{May_26_2024_1}
		defines a complete type $\tp(a/A)$: if $b\not\in A$ and $Ab\leq M$, the mapping which is the identity on $A$ and sends $a$ to $b$ is elementary. 
		\end{rmk}

	\begin{lemma}\label{elementary2}
		Let $M$ be an infinite unconfined STS and let $\bar{a},\bar{b}$ be tuples from $M$. If for some HF-orderings $<$ and $<^\prime$  of $M$,  $\cl_<(\bar{a})$ and $\cl_{<^\prime}(b)$ have the same diagram as partial STS, then  $\tp_M(\bar{a})=\tp_M(\bar{b})$.
	\end{lemma}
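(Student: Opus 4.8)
The plan is to reduce the statement to Corollary~\ref{elementary}. Write $A=\cl_<(\bar a)$ and $B=\cl_{<^\prime}(\bar b)$; viewed with the blocks they inherit from $M$, these are partial STS substructures of $M$, and the hypothesis that $A$ and $B$ have the same diagram (with the distinguished tuples $\bar a$, $\bar b$) is exactly the assertion that there is an isomorphism of partial STSs $f:A\to B$ with $f(\bar a)=\bar b$. So it will be enough to show that $f$ is a partial elementary map from $M$ to $M$, and for that it suffices to show $A\leq M$ and $B\leq M$.

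The first step is therefore to establish $A\leq M$. Since $\cl_<$ is a (finitary) closure operator it is idempotent, so $\cl_<(A)=A$; by Lemma~\ref{Feb_23_2024_7} this makes $<$ an HF-ordering of $M$ over $A$, and then Lemma~\ref{order-over} rearranges it into an HF-ordering of $M$ with $A$ as an initial segment, which is precisely the meaning of $A\leq M$. Running the identical argument with $<^\prime$ in place of $<$ gives $B\leq M$. Now Corollary~\ref{elementary}, applied with $N=M$, says that $f:A\cong B$ is a partial elementary map from $M$ to $M$; since $f(\bar a)=\bar b$, this immediately yields $\tp_M(\bar a)=\tp_M(\bar b)$.

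There is essentially no obstacle here, as the real work has already been packaged into Corollary~\ref{elementary} (and, behind it, Theorem~\ref{May_26_2024_4}). The only two points that need a word of care are: first, that $\cl_<(\bar a)$ may well be infinite — for instance it is an entire binary tree of height $\omega$ when $<$ is the natural ordering of such a tree and $\bar a$ is its root — but this causes no trouble because Corollary~\ref{elementary} is stated for arbitrary, not merely countable, strong substructures; and second, that "same diagram" must be unpacked as an isomorphism of partial STSs matching $\bar a$ to $\bar b$, so that the elementarity of $f$ transfers to the types of the two tuples rather than merely to some reindexing of them.
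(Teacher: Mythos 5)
Your proof is correct and follows essentially the same route as the paper: the paper's own argument is exactly "apply Corollary~\ref{elementary}, since $\cl_<(\bar a)\leq M$ and $\cl_{<'}(\bar b)\leq M$", and your unpacking of why the closures are strong in $M$ (idempotence of $\cl_<$, Lemma~\ref{Feb_23_2024_7}, Lemma~\ref{order-over}) is just the detail the paper leaves implicit. (A minor aside: since an HF-ordering gives each element at most one pair of smaller block-mates, $\cl_<(\bar a)$ is in fact at most countable, though as you note Corollary~\ref{elementary} covers arbitrary cardinality anyway.)
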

	\begin{proof} By Corollary~\ref{elementary}, since $\cl_<(\bar{a})\leq M$ and  $\cl_{<^\prime}(\bar{b})\leq M$.
		\end{proof}

	\begin{prop}\label{stable} The theory of free STSs  is stable.
	\end{prop}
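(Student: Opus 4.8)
The plan is to bound the number of complete types over a set of parameters of cardinality $\lambda$ by $\lambda^{\aleph_0}$, using the back-and-forth machinery already established. Stability will follow from counting types over a \emph{model} $M$ (by standard reductions it suffices to do this, or even to count types over an arbitrary parameter set, using the fact that the monster is a union of an elementary chain). So let $M$ be an infinite unconfined STS (which we may take to be sufficiently saturated, e.g. living inside the monster $\mons$) and consider a subset $A\subseteq M$ of cardinality $\lambda\geq\aleph_0$. The key observation is Lemma~\ref{elementary2}: the type of a tuple $\bar a$ over the empty set --- and, relativising to parameters, over any fixed set --- is completely determined by the isomorphism type of $\cl_<(\bar a)$ as a partial STS, for any HF-ordering $<$ of the ambient model. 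So the real content is a \emph{counting} argument for these closures.

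The first step is to fix an HF-ordering $<$ of $\mons$ (adding it to the monster as in Proposition~\ref{Feb_23_2024_8}) such that $\cl_<(A)=A$; this is possible by Lemma~\ref{May_2025} (or its proof) applied after first arranging $A$ to be algebraically closed, since $\acl(A)$ has the same cardinality as $A$ and $A\leq\mons$ holds by Proposition~\ref{Feb_23_2024_8}. Then, for any single element $c\in\mons$, the set $\cl_<(Ac)=\cl_<(A)\cup\cl_<(c)=A\cup\cl_<(c)$, and $\cl_<(c)$ is a \emph{finite} or \emph{countable} partial STS (it is generated by the finitary closure operator starting from one element, and at each step only finitely many new elements enter, so $\cl_<(c)$ is at most countable). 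The type $\tp(c/A)$ is then determined by: (i) the isomorphism type of the countable partial STS $A\cup\cl_<(c)$ together with the data of which elements of $\cl_<(c)$ lie in $A$ and how $\cl_<(c)\cap A$ sits inside $A$ --- equivalently, the quantifier-free diagram of $A\cup\cl_<(c)$ over $A$. For tuples $\bar a=(a_1,\dots,a_n)$ one replaces $\cl_<(c)$ by $\bigcup_i\cl_<(a_i)$, still a countable set.

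The counting then runs as follows. An element $c$ over $A$ contributes a countable partial STS $D=\cl_<(Ac)$ extending $A$, with a distinguished new point $c$, and the whole type is read off the $A$-diagram of $D$. The new points $D\smallsetminus A$ form a countable set, and each block of $D$ either lies entirely in $A$ or involves at least one new point; there are at most countably many blocks involving new points, and each such block is specified by naming at most three points, each of which is either one of the countably many new points or one of the $\lambda$ parameters. So the $A$-diagram of $D$ is coded by a countable amount of data drawn from a set of size $\lambda$, giving at most $\lambda^{\aleph_0}$ possibilities; for $n$-tuples the exponent is still $\aleph_0\cdot n=\aleph_0$. Hence $|S_n(A)|\leq\lambda^{\aleph_0}$ for every $n$, which is exactly $\lambda$-stability for all $\lambda$ with $\lambda^{\aleph_0}=\lambda$; since there are arbitrarily large such $\lambda$, the theory is stable.

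\textbf{The main obstacle} I anticipate is making precise that the type $\tp(\bar a/A)$ really is determined by the $A$-diagram of $\cl_<(\bar a\,A)$ --- Lemma~\ref{elementary2} is stated only for types over $\emptyset$, so one must upgrade it to types over a parameter set. The clean way is: if $\cl_<(\bar a A)$ and $\cl_{<'}(\bar b A)$ have the same diagram over $A$ via a map fixing $A$ pointwise and sending $\bar a$ to $\bar b$, then (using $\cl_<(A)=A$, so that $A\leq\cl_<(\bar aA)$ and $A\leq\mons$ and the closures are strong in $\mons$ by Lemma~\ref{Feb_23_2024_7}) Corollary~\ref{elementary} applied to the partial STS isomorphism $\cl_<(\bar aA)\cong\cl_{<'}(\bar bA)$ over $A$ shows it is partial elementary, whence $\tp(\bar a/A)=\tp(\bar b/A)$. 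One should also double-check the harmless point that $\cl_<(c)$ is genuinely at most countable: its $i$-th approximation adds, for each already-present element $b$, the at-most-one pair $\{b_1,b_2\}$ with $\{b,b_1,b_2\}$ a block and $b_1,b_2<b$, so the approximations grow by a finite factor at each of countably many stages. With these two points pinned down the counting is routine.
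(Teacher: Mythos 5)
Your proposal is correct and takes essentially the same route as the paper: fix an HF-ordering of the monster in which the (algebraically closed) parameter set is $<$-closed, observe via Corollary~\ref{elementary}/Lemma~\ref{elementary2} that a type is determined by the diagram over the parameters of the countable $<$-closure of the tuple, and count such diagrams by $\lambda^{\aleph_0}$. The only point worth making explicit is that your claim that only countably many blocks involve new points relies on $\langle A\rangle\subseteq\acl(A)=A$ (a block with two points of $A$ forces the third point into $\langle A\rangle$), which is exactly why the paper performs the count over a model, where closure under the operation is automatic.
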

	\begin{proof}  We count  $1$-types over a model $M$ and show that their number is $\leq |M|^\omega$. We assume $M$ is an elementary substructure of the monster model $\mons$. Then  $M\leq \mons$ by Lemma~\ref{Feb_23_2024_9}, and we can fix an HF-ordering $<$ of $\mons$ where $M$ is an initial segment. If $a,b \in \mons$ and $\cl_<(a)$, $\cl_<(b)$ have the same diagram over $M$ as partial STS, then, by Lemma~\ref{elementary2}, they have same type over $M$.
	Hence, the number of $1$-types over $M$ is bounded by the number of partial STS isomorphism types of countable tuples over $M$, which is clearly $\leq |M|^\omega$.
	\end{proof}

	\begin{prop}\label{April_11_2024_6}
		Let $M,N$ be models of the theory of free STSs. If  $M\leq N$ and $M\subseteq N$, then  $M\preccurlyeq N$.
	\end{prop}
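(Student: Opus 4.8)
The plan is to verify the Tarski--Vaught criterion, adapting the back-and-forth machinery already developed. Given $M\leq N$ with $M\subseteq N$, both models of the theory of free STSs (hence infinite unconfined STSs), suppose $\varphi(\bar{x},y)$ is a formula, $\bar{a}$ is a tuple from $M$, and $N\models\exists y\,\varphi(\bar{a},y)$; we must produce $b\in M$ with $N\models\varphi(\bar{a},b)$. Fix $b'\in N$ witnessing the existential in $N$. First I would fix an HF-ordering $<$ of $N$ in which $M$ is an initial segment --- this exists precisely because $M\leq N$ by hypothesis and the definition of $\leq$ for arbitrary partial STSs. Set $C=\cl_<(\bar{a}\,b')$, a finite (more precisely, countable) subset of $N$; by Lemma~\ref{Feb_23_2024_7}, $<$ is an HF-ordering of $N$ over $C$, so $C\leq N$, and likewise $C\cap M=\cl_<(\bar a)$ is an initial segment of $M$, giving $\cl_<(\bar a)\leq M$.

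The key step is to realise the isomorphism type of $C$ over $\cl_<(\bar a)$ \emph{inside} $M$. Since $M\models\Th(N)$ and $M$ is infinite unconfined, and since $\cl_<(\bar a)\leq M$, I would invoke Corollary~\ref{June_25_2024_4} (after passing to a $\kappa$-saturated HF-extension if necessary, using Lemma~\ref{May_2025}) repeatedly, or more directly the type-realisation argument in the proof of Theorem~\ref{May_26_2024_4}, to build a sequence inside $M$ realising the diagram of $C\smallsetminus\cl_<(\bar a)$ over $\cl_<(\bar a)$ via an isomorphism $g$ fixing $\cl_<(\bar a)$ pointwise, with $g(C)\leq M$. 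Concretely: the finitely many elements of $C\smallsetminus\cl_<(\bar a)$ can be listed in increasing $<$-order, and those that are not products of smaller elements can be added one at a time by Corollary~\ref{June_25_2024_4}, while the remaining ones are forced as products; this is exactly the bookkeeping carried out in Cases~2 and~3 of Theorem~\ref{May_26_2024_4}, and it works because $M$ is a model of the (complete!) theory, so it has enough saturation after an elementary extension, and elementary substructures preserve the relevant facts. This gives $g:C\to M$ an isomorphism of partial STSs with $g\restriction\cl_<(\bar a)=\mathrm{id}$ and $g(C)\leq M$; in particular $b:=g(b')\in M$.

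Finally I would apply Corollary~\ref{elementary}: both $C\leq N$ and $g(C)\leq M\leq N$, and $g$ is an isomorphism of partial STSs fixing $\bar a$, so $g$ (or rather its restriction together with the identity on $\bar a$, which is consistent since $\bar a\subseteq\cl_<(\bar a)$ is fixed) is a partial elementary map from $N$ to $N$. Hence $\tp_N(\bar a\,b')=\tp_N(\bar a\,b)$, and since $N\models\varphi(\bar a,b')$ we get $N\models\varphi(\bar a,b)$ with $b\in M$, completing the Tarski--Vaught test.

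The main obstacle I anticipate is the middle step: making precise that $M$, merely as a model of the theory with $\cl_<(\bar a)\leq M$, actually contains a copy of the finite configuration $C$ over $\cl_<(\bar a)$ --- one must either quote that the condition ``$Aa\leq M$'' is first-order-expressible (Remark~\ref{strongformula}) so that realisability transfers from $N$ to the elementarily equivalent $M$, or run the saturation argument of Theorem~\ref{May_26_2024_4} on an elementary extension and then reflect back down. Care is needed because $\cl_<(\bar a)$ need not be $\langle\bar a\rangle$, but since it is finite and strong in $M$, Lemma~\ref{May_26_2024_2} and Corollary~\ref{elementary} handle the passage to generated substructures without trouble.
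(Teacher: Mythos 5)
Your outer layer (Tarski--Vaught plus an application of Corollary~\ref{elementary} inside $N$) is fine, but the central step is a genuine gap, and you have located it yourself: you must produce the copy $g(C)$ --- or at least the witness $b=g(b')$ --- \emph{inside} $M$, and $M$ is an arbitrary model of the theory, not a saturated one. Every tool in the paper that realises a configuration inside a prescribed structure (Corollary~\ref{June_25_2024_4}, Lemma~\ref{July_11_2024_1}, Cases 2 and 3 of Theorem~\ref{May_26_2024_4}) assumes that the target, with its HF-ordering, is $\omega$- or $\kappa$-saturated, so none of them applies to $M$. Neither of your proposed repairs closes the gap. For the first: strongness is expressed by the infinite scheme $(\strong_n)_{n<\omega}$ of Remark~\ref{strongformula}, so the condition you want to realise is a type, not a single formula, and in any case it carries parameters from $\cl_<(\bar a)$; completeness only gives $M\equiv N$, and transferring formulas (let alone types) with parameters from $N$ to $M$ is exactly the elementarity being proved. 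For the second: running the argument of Theorem~\ref{May_26_2024_4} in a saturated $M'\succcurlyeq M$ produces the copy in $M'$, not in $M$, and ``reflecting back down'' would again require either realising a type over $\cl_<(\bar a)$ in the non-saturated $M$ or elementarity of $M$ in $N$ --- circular. Two smaller slips: $C\cap M$ need not equal $\cl_<(\bar a)$, since $\cl_<(b')$ can meet the initial segment $M$; and $C\smallsetminus M$ (and $\cl_<(\bar a)$ itself) can be countably infinite, so the ``finitely many elements listed in increasing order'' bookkeeping would in any case have to be replaced by the type argument of Case~3 of Theorem~\ref{May_26_2024_4}.

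The paper sidesteps the problem by transferring in the opposite direction, so that saturation is only ever needed on the $N$ side, where it can be arranged: by Corollary~\ref{Feb_23_2024_9} one may replace $N$ by an $\omega_1$-saturated elementary extension in which $M$ is still strong; with $\bar a'$ enumerating $\cl_<(\bar a)\subseteq M$, the type $p=\tp_M(\bar a')$ is realised in $N$ by some $\bar b$ with $\bar b\leq N$ (the strongness scheme belongs to $p$), and Corollary~\ref{elementary} applied inside $N$ to the partial STS isomorphism $\bar a'\mapsto\bar b$ gives $\tp_N(\bar a')=\tp_N(\bar b)=\tp_M(\bar a')$, hence $M\preccurlyeq N$. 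In fact, once you have observed (correctly) that $\cl_<(\bar a)\leq M$ and $\cl_<(\bar a)\leq N$, Corollary~\ref{elementary} applied to the identity on $\cl_<(\bar a)$ (or even to $\mathrm{id}_M$, using $M\leq M$ and the hypothesis $M\leq N$) already yields $\tp_M(\bar a)=\tp_N(\bar a)$ directly, so the detour through $C$ and the Tarski--Vaught test is unnecessary.
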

	\begin{proof}   By Corollary~\ref{Feb_23_2024_9} we may assume that $N$ is  $\omega_1$-saturated. Let $\bar{a}\in M$ be a finite tuple. We want to show that $\tp_N(\bar{a})=\tp_M(\bar{a})$. Fix an HF-ordering $<$ of $N$ with initial segment $M$ and let $\bar{a}^\prime$ be a countable tuple enumerating  $\cl_<(\bar{a})$.  It suffices to show that $\tp_N(\bar{a}^\prime)=\tp_M(\bar{a}^\prime)$. Let $p(\bar{x})=\tp_M(\bar{a}^\prime)$. By $\omega_1$-saturation, there is some tuple $\bar{b}\in N$  realizing $p(\bar{x})$. This implies that  $\bar{b}\leq N$  (the formulas expressing this belong to $p(\bar{x})$),  and  moreover $\bar{a}^\prime$ and $\bar{b}$ have the same quantifier-free type as partial STSs. They have the same diagram as STSs. Moreover $\bar{a}^\prime\leq N$.  By  Corollary~\ref{elementary}, $\bar{a}^\prime\equiv \bar{b}$ in $N$.  Hence  $\tp_N(\bar{a}^\prime)=\tp_N(\bar{b})=\tp_M(\bar{a}^\prime)$.
	\end{proof}
	
	\begin{prop}\label{acl} Let $M$ be an infinite unconfined STS and let $A =\langle A\rangle \subseteq M$ be infinite.  Then $A\equiv M$. Moreover,
		\begin{enumerate}
			\item $\acl(A)\preceq M$
			\item $A \leq M$ if and only if $ \acl(A)=A$.
		\end{enumerate}
	\end{prop}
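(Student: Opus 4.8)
The plan is to derive all three assertions from facts already in hand: completeness of the theory $T$ of infinite unconfined STSs (Theorem~\ref{completeness}); Proposition~\ref{Feb_23_2024_8}, that an algebraically closed subset of an unconfined STS is strong in it; and Proposition~\ref{April_11_2024_6}, that a strong submodel of a model of $T$ is an elementary submodel.

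First I would observe that both $A=\langle A\rangle$ and $\acl(A)$ are models of $T$. Each is closed under $\cdot$ (for $\langle A\rangle$ by definition of the generated substructure, for $\acl(A)$ because $\cdot$ is $\emptyset$-definable), hence an STS substructure of $M$, respectively of a monster $\mons\succeq M$; each contains the infinite set $A$, hence is infinite; and a substructure of an unconfined STS is unconfined, since any finite subset carries the same blocks inside the substructure as inside $M$ (equivalently, an HF-ordering restricts, by Proposition~\ref{unconfined}). So both are infinite unconfined STSs, i.e.\ models of $T$, and completeness immediately gives $A\equiv M$.

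For item~1, note $\acl(A)=\acl(\acl(A))$, so Proposition~\ref{Feb_23_2024_8} applied in $\mons$ gives $\acl(A)\leq\mons$; since $\acl(A)$ and $\mons$ are models of $T$ with $\acl(A)\subseteq\mons$, Proposition~\ref{April_11_2024_6} yields $\acl(A)\preccurlyeq\mons$, which descends to $\acl(A)\preccurlyeq M$ whenever $\acl(A)\subseteq M$, as $M\preccurlyeq\mons$. For item~2: if $\acl(A)=A$ then $A=\acl(A)\subseteq M$ and Proposition~\ref{Feb_23_2024_8} gives $A\leq M$ directly. Conversely, suppose $A\leq M$. Since $A$ is a model of $T$ with $A\subseteq M$, Proposition~\ref{April_11_2024_6} gives $A\preccurlyeq M$; but an elementary submodel is algebraically closed in its extension, for if $b$ were algebraic over $A$ via an $A$-formula $\varphi(x,\bar a)$ with exactly $k$ solutions, then $A\models\exists^{=k}x\,\varphi(x,\bar a)$ by elementarity, so all $k$ solutions already lie in $A$ and hence $b\in A$. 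Thus $\acl(A)=A$.

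I do not expect a genuine obstacle here: the substance of the proposition is entirely carried by the earlier results. The only point calling for attention is bookkeeping about the ambient model in which $\acl$ is computed, together with the verification that the structures fed into Propositions~\ref{Feb_23_2024_8} and~\ref{April_11_2024_6} really are models of $T$ — which is precisely the remark that unconfinedness (equivalently, the existence of an HF-ordering) passes to substructures.
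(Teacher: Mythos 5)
Your proof is correct and follows essentially the same route as the paper: completeness (Theorem~\ref{completeness}) gives $A\equiv M$, Proposition~\ref{Feb_23_2024_8} together with Proposition~\ref{April_11_2024_6} gives $\acl(A)\preccurlyeq M$, and item~2 combines these with the standard fact that elementary submodels are algebraically closed. The extra details you supply (that $\langle A\rangle$ and $\acl(A)$ are infinite unconfined STSs, hence models of the theory, and the passage through the monster) are just the bookkeeping the paper leaves implicit.
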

	\begin{proof} Since $A$ is an infinite unconfined STS, by completeness of the theory $A\equiv M$.
	Claim 1 follows from Proposition~\ref{April_11_2024_6}, since, by Proposition~\ref{Feb_23_2024_8},  $\acl(A)\leq M$.
	Claim 2  follows from 1 and Proposition~\ref{April_11_2024_6}.
	\end{proof}
	
	\begin{rmk}
		Let $M$ be an infinite unconfined STS. If $A\subseteq M$, then $\acl(A)$ is the intersection of all substructures $B\supseteq A$ of $M$ such that $B\leq M$.
	\end{rmk}
	\begin{proof} The trivial case where  $A$ is contained in a block is easy. Otherwise, $\langle A\rangle$ is infinite and, by Proposition~\ref{acl}, $\acl(A)$ contains the intersection of all substructures $B$ of $M$  such that $A\subseteq B\leq M$. On the other hand, if $B$ is one of these structures, again by Proposition~\ref{acl}, $\acl(A)\subseteq \acl(B) =B$.
		\end{proof}

	\begin{rmk}\label{April_11_2024_4} Let $F_n$ be the free STS with $n\geq 3$ generators. Every surjective homomorphism $f:F_n\rightarrow F_n$  is an automorphism of $F_n$.
	\end{rmk}
	\begin{proof}  See~\cite{barcas2}.
	\end{proof}
	
	\begin{prop}\label{April_11_2024_5} Let $F_n$ be the free STS with $n\geq 3$ generators. Every elementary embedding $f:F_n\rightarrow F_n$ is surjective and, therefore, is an automorphism of $F_n$.
	\end{prop}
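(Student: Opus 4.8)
The plan is to show that $f$ is surjective; since a surjective endomorphism of $F_n$ is an automorphism by Remark~\ref{April_11_2024_4}, this is enough. \emph{First-order transfer.} Fix a free base $A_0=\{a_1,\dots,a_n\}$ of $F_n$. It is an initial segment of a well-founded HF-ordering of $F_n$ (Proposition~\ref{May12_2.2}, Lemma~\ref{order-over}), so $A_0\le F_n$; and it is \emph{discrete}: the $a_i$ are pairwise distinct and no three of them lie in a block. Now ``$A_0\le F_n$'' is the conjunction of the first-order sentences $\strong_m(\bar a)$, $m<\omega$ (Remark~\ref{strongformula}), and ``discrete and pairwise distinct'' is quantifier-free; as $f$ is elementary, $\bar b:=f(\bar a)$ satisfies all of these, so $B_0:=\{f(a_1),\dots,f(a_n)\}$ is discrete, has $n$ elements, and $B_0\le F_n$. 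Since $f$ is an injective homomorphism, $N:=f(F_n)=\langle B_0\rangle$ and $f$ restricts to an isomorphism $F_n\cong N$; thus $N$ is a free STS of rank $n$ with free base $B_0$, and $N\le F_n$ by Lemma~\ref{May_26_2024_2}.

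\emph{Predimension bound.} It remains to see that a free substructure $N\le F_n$ of rank $n$ equals $F_n$. Let $(S_k)$ be the levels of a standard free construction of $F_n$ over $A_0$. Each $S_k$ is finite, and by Proposition~\ref{May12_2} the blocks of $S_{k+1}$ not already in $S_k$ are exactly the defining blocks of the new vertices, one per new vertex; hence $\delta(S_{k+1})=\delta(S_k)$, so $\delta(S_k)=\delta(S_0)=n$ for every $k$. Also, for finite $A$, $A\le F_n\Rightarrow A\le^\ast F_n$: ordering a finite $C$ with $A\subseteq C\subseteq F_n$ by an HF-ordering of $F_n$ with $A$ initial, every vertex of $C\setminus A$ tops at most one block of $C$ and every block of $C$ whose top lies in $A$ lies entirely in $A$, so $|\bl(C)|\le|C\setminus A|+|\bl(A)|$ and $\delta(C)\ge\delta(A)$; taking $C=S_k$ for large $k$ gives $\delta(A)\le n$. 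In particular $F_n$ has \emph{no} discrete $\le$-strong subset of size $\ge n+1$.

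\emph{Conclusion.} Suppose $N\subsetneq F_n$. Take an HF-ordering $<$ of $F_n$ with $N$ initial for which $F_n\setminus N$ has a $<$-least element $p$. Since $N$ is closed under the operation and $p\notin N$, $p$ is not a product of two elements of $N$; as the only blocks $\{p,x,y\}$ with $x,y<p$ would have $x,y\in N$, this gives $\cl_<(p)=\{p\}$, so $N\cup\{p\}\le F_n$ by Proposition~\ref{May_26_2024_1} ($N\cup\{p\}$ being $N$ with $p$ adjoined as an isolated vertex). Using $B_0\le N$ (Proposition~\ref{May12_2.2}) and inserting $p$ just after $B_0$ in a well-founded HF-ordering of $N$ with $B_0$ initial, we get $B_0\cup\{p\}\le N\cup\{p\}$; reshuffling inside the $\cl$-closed initial segment $N\cup\{p\}$ (Lemmas~\ref{Feb_23_2024_7},~\ref{June_25_2024_1.1}) then gives $B_0\cup\{p\}\le F_n$. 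But $b_ib_j\in N$ for all $i,j$ and $p\notin N$, so $B_0\cup\{p\}$ is discrete of size $n+1$ --- contradicting the predimension bound. Hence $N=F_n$, and $f$ is an automorphism.

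The hard step is producing $p$: one needs an HF-ordering of $F_n$ with $N$ initial in which $F_n\setminus N$ has a least element --- equivalently (Proposition~\ref{May_26_2024_1}), that some $p\in F_n\setminus N$ realises over $N$ the ``generic'' type given by $N\cup\{p\}\le F_n$. I expect this to come from the HF-ordering surgery of Section~\ref{section5} --- note that a $\prec$-least element of $F_n\setminus N$ for a well-founded standard-construction ordering $\prec$ of $F_n$ is necessarily one of the free generators, since otherwise its defining pair, lying below it, would be contained in $N$ --- perhaps after passing to a $\kappa$-saturated elementary extension, invoking Corollary~\ref{June_25_2024_4} there, and descending back to $F_n$. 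The remaining ingredients (the transfer through $f$, the identity $\delta(S_k)=n$, and $A\le F_n\Rightarrow A\le^\ast F_n$) are routine.
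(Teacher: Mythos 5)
Your reduction (transfer $\strong_m$ through $f$ to get $B_0\le F_n$, hence $N:=f(F_n)=\langle B_0\rangle\le F_n$ free of rank $n$; bound $\delta$ on finite strong subsets by $n$ via the levels $S_k$; then contradict with a discrete strong set of size $n+1$) is sound as far as it goes, but the step you yourself flag as hard is a genuine gap, and it is the whole content of the argument: you need some $p\in F_n\smallsetminus N$ with $N\cup\{p\}\leq F_n$ (equivalently, an HF-ordering of $F_n$ with $N$ initial in which $F_n\smallsetminus N$ has a least element, equivalently $\cl_<(p)=\{p\}$ as in Proposition~\ref{May_26_2024_1}). Knowing $N\le F_n$ only gives an HF-ordering with $N$ initial; the complement need not have a minimum, and your proposed repair --- pass to a $\kappa$-saturated elementary extension, apply Corollary~\ref{June_25_2024_4} there, and ``descend back to $F_n$'' --- does not descend: ``there exists $p$ with $Np\leq M$'' is not a first-order condition over the infinite parameter set $N$ but the realization of a type (the conjunction of infinitely many $\strong_m$-conditions involving arbitrary finite parts of $N$), and a countable model like $F_n$ has no reason to realize a type just because an elementary extension does. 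Indeed this is exactly the configuration (Case~3 of Theorem~\ref{May_26_2024_4}) where the paper has to invoke saturation of the target model, which $F_n$ lacks. Your observation that the $\prec$-least element of $F_n\smallsetminus N$ in a standard well-founded ordering is a free generator $a_i$ is correct, but it does not yield $Na_i\leq F_n$, since $\prec$ does not have $N$ as an initial segment; so the contradiction is not reached. (Note also that the existence of a proper strong copy of $F_n$ inside $F_n$ is, via Corollary~\ref{elementary}, exactly equivalent to the existence of a non-surjective elementary self-embedding, so your missing lemma is not an auxiliary fact but a restatement of the proposition itself.)

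For comparison, the paper's proof avoids all of this machinery: writing $f(a_i)=t_i(\bar a)$ for terms $t_i$, elementarity of $f$ transfers the sentence $\exists\bar x\,\bigwedge_i f(a_i)=t_i(\bar x)$ from $F_n$ to $f(F_n)$ and back through the isomorphism, producing $\bar b$ with $a_i=t_i(\bar b)$; then $F_n=\langle\bar b\rangle$, so by the Hopfian property (Remark~\ref{April_11_2024_4}) the surjection $b_i\mapsto a_i$ is an automorphism $g$, and $g$ agrees with $f$ on the generators $\bar a$, whence $f=g$ is an automorphism. If you want to salvage your route, you would have to prove directly that a proper subalgebra $N\leq F_n$ forces the existence of such a $p$ inside $F_n$ (e.g., by showing the ordering on $F_n\smallsetminus N$ can be chosen well-founded), and at present you have no argument for that.
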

	\begin{proof} Let  $F_n=\langle \bar{a}\rangle$, where $\bar{a}=a_1,\ldots,a_n$ is a free base,  and let $a_i^\prime =f(a_i) = t_i(\bar{a})$  for every   $i=1,\ldots,n$, where $t_i$ is a corresponding term. Then  $F_n$ satisfies
		$$\exists x_1\ldots x_n  \bigwedge_{i=1}^n  a_i^\prime = t_i(x_1,\ldots,x_n).$$
		Since $f(F_n)\preceq F_n$, the same sentence holds in $f(F_n)$ and, by isomorphism,  $F_n$ satisfies
		$$\exists x_1\ldots x_n  \bigwedge_{i=1}^n  a_i= t_i(x_1,\ldots,x_n).$$
		Choose $\bar{b}= b_1,\ldots,b_n\in F_n$  such that   $a_i= t_i(\bar{b})$  for every $i=1,\ldots,n$. Then  $F_n=\langle b_1,\ldots,b_n\rangle$  and $b_1,\ldots,b_n$ are independent by Remark~\ref{April_11_2024_4}. Now consider the automorphism $g:F_n\rightarrow F_n$  defined by  $g(b_i)=a_i$  for  $i=1,\ldots,n$. Observe that  for every $i$
		$$g(a_i)= g(t_i(\bar{b}))= t_i(g(\bar{b}))=t_i(\bar{a})= a_i^\prime.$$
		This implies that $f=g$ and, therefore, that $f$ is an automorphism.
	\end{proof}

	\begin{prop}\label{April_11_2024_7} If $3\leq n<m$, there is no elementary embedding of $F_m$, the free STS with $m$ generators, into $F_n$, the free STS of $n$ generators.
	\end{prop}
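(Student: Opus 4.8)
The plan is to adapt the argument used for Proposition~\ref{April_11_2024_5}. Suppose for contradiction that $f:F_m\to F_n$ is an elementary embedding and put $M'=f(F_m)$, so that $f$ is an isomorphism of $F_m$ onto $M'$ and $M'\preccurlyeq F_n$. Fix a free base $a_1,\dots,a_m$ of $F_m$ and a free base $b_1,\dots,b_n$ of $F_n$, and set $a_i'=f(a_i)$. Since each $a_i'$ lies in $F_n=\langle b_1,\dots,b_n\rangle$, there are terms $t_i(\bar y)$ with $a_i'=t_i(b_1,\dots,b_n)$, so $F_n$ satisfies the formula $\exists y_1\dots y_n\bigwedge_{i=1}^m a_i'=t_i(\bar y)$, whose parameters $a_1',\dots,a_m'$ lie in $M'$. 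By elementarity this formula holds in $M'$: there are $b_1',\dots,b_n'\in M'$ with $a_i'=t_i(\bar b')$ for all $i$. Now $\langle a_1',\dots,a_m'\rangle=f(\langle a_1,\dots,a_m\rangle)=M'$ because $f$ is a homomorphism, and each $a_i'$ lies in $\langle b_1',\dots,b_n'\rangle\subseteq M'$; hence $M'=\langle b_1',\dots,b_n'\rangle$ is generated by $n$ elements. Pulling these back through $f^{-1}$, $F_m$ would be generated by $n<m$ elements.

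It then remains to show this is impossible, that is, that $F_m$ is not generated by $k<m$ elements. If $F_m=\langle c_1,\dots,c_k\rangle$ with $1\le k<m$ (the case $k=0$ being absurd, as $\langle\emptyset\rangle=\emptyset$), then the universal mapping property yields a surjective homomorphism $g:F_k\to F_m$ sending a free base of $F_k$ onto $c_1,\dots,c_k$, and, since $k\le m$, a surjective homomorphism $p:F_m\to F_k$ (sending a free base of $F_m$ onto a free base of $F_k$ and the remaining generators anywhere). Then $g\circ p:F_m\to F_m$ is surjective, hence an automorphism by Remark~\ref{April_11_2024_4}; in particular it is injective, so $p$ is injective, whence $p$ is a bijective homomorphism and therefore an isomorphism. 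Proposition~\ref{May11_9.1} then gives $m=k$, a contradiction.

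The one place where elementarity is genuinely used — and the only delicate step — is the descent of the existential formula from $F_n$ to $M'$: existential formulas do not in general transfer downward along a mere inclusion $M'\subseteq F_n$, so the hypothesis $M'\preccurlyeq F_n$ (equivalently, that $f$ is elementary) is essential, and this is exactly what separates the statement from the existence of plain embeddings; indeed $F_n$ does embed into $F_m$. Everything else is routine. As an alternative to the counting argument of the second paragraph, one can instead observe that $F_n$ even embeds \emph{elementarily} into $F_m$: taking a sub-base $B$ of a free base of $F_m$, one checks $\langle B\rangle\le F_m$, so $\langle B\rangle\cong F_n$ is an elementary submodel of $F_m$ by Proposition~\ref{April_11_2024_6}; composing such an embedding with $f$ yields an elementary embedding $F_n\to F_n$, which is an automorphism by Proposition~\ref{April_11_2024_5}, hence onto, forcing $f$ onto, so $f$ is an isomorphism and $m=n$ — again a contradiction.
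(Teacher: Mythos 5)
Your main argument is correct, and it takes a genuinely different route from the paper. The paper's proof is three lines: since $F_n\leq F_m$, Proposition~\ref{April_11_2024_6} gives $F_n\preccurlyeq F_m$, so a putative elementary $f:F_m\to F_n$ becomes a non-surjective elementary self-embedding of $F_m$, contradicting Proposition~\ref{April_11_2024_5}. You instead re-run the \emph{proof idea} of Proposition~\ref{April_11_2024_5} inside $F_n$: using only $f(F_m)\preccurlyeq F_n$ (automatic from elementarity of $f$), you pull the existential formula down to the image to conclude that $F_m$ is generated by $n$ elements, and then you supply a rank-invariance argument --- $F_m$ ($m\geq 3$) admits no generating set of size $k<m$, proved by composing surjections $F_m\to F_k\to F_m$ and invoking Remark~\ref{April_11_2024_4} together with Proposition~\ref{May11_9.1}. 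This is sound (note the application of Remark~\ref{April_11_2024_4} needs $m\geq 3$, which holds since $m>n\geq 3$), and it buys something the paper's proof does not make explicit: it avoids Proposition~\ref{April_11_2024_6} and the claim $F_n\leq F_m$ altogether, and it isolates the purely algebraic fact that the free rank of $F_m$ is exactly $m$. The alternative you sketch in your last paragraph is essentially the paper's own proof in mirror image (applying Proposition~\ref{April_11_2024_5} to $F_n$ rather than to $F_m$, via an elementary copy $\langle B\rangle\cong F_n$ inside $F_m$); there, your unproved step $\langle B\rangle\leq F_m$ is exactly the assertion the paper also leaves unjustified, so you are on equal footing with the original.
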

	\begin{proof} Assume $n<m$ and $f:F_m\rightarrow F_n$ is an elementary embedding. Since  $F_n\leq F_m$, by Proposition~\ref{April_11_2024_6}  $F_n\preccurlyeq F_m$.  Hence  $f:F_m\rightarrow F_m$ is elementary with $f(F_m)\neq F_m$, in contradiction with Proposition~\ref{April_11_2024_5}.
	\end{proof}

	\begin{prop}   There is a (countable) unconfined STS which is not free.
	\end{prop}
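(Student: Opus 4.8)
The model I propose is $N_\omega:=\bigcup_{n<\omega}N_n$, where $(N_n\mid n<\omega)$ is the increasing chain of free systems used in the proof that $\Cun$ contains an infinite ascending chain of minimal pairs: $N_0=\langle a_1,a_2,a_3\rangle$ is free on $\{a_1,a_2,a_3\}$, we set $b_0=a_1$, and each $N_n=\langle b_n,a_2,a_3\rangle$ is free on $\{b_n,a_2,a_3\}$, with $N_n\subsetneq N_{n+1}$ and $b_n=(b_{n+1}\cdot a_2)\cdot(b_{n+1}\cdot a_3)$; the existence of each step uses the non-surjective self-embedding of $F_3$ from \cite{barcas2}. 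First I would check that $N_\omega$ is a countable STS — any pair, indeed any finite subset, already lies in some $N_m$ — and that it is unconfined: a finite $X\subseteq N_\omega$ lies in some $N_m\cong F_3$, which is unconfined by Proposition~\ref{unconfined}(1), so $X$ contains a point lying in at most one block, and whether a triple is a block does not depend on the ambient system. Hence $N_\omega$ is an infinite unconfined STS, so a model of the theory.

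The heart of the argument is that $N_\omega$ is \emph{not} free. By Proposition~\ref{May12_2.2} it suffices to show that no HF-ordering of $N_\omega$ is well-founded. Put $c_n=b_{n+1}\cdot a_2$ and $d_n=b_{n+1}\cdot a_3$, so that $N_\omega$ contains the pairwise distinct blocks $E_n=\{b_n,c_n,d_n\}$, $F_n=\{c_n,b_{n+1},a_2\}$, $G_n=\{d_n,b_{n+1},a_3\}$ for $n<\omega$. Let $<$ be an arbitrary HF-ordering of $N_\omega$; I claim $b_n>b_{n+1}$ for all but finitely many $n$, which yields an infinite $<$-descending chain. The only tool is the defining HF property in the form: \emph{each element is the $<$-maximum of at most one block}. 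In particular $a_2$ is the maximum of $F_n$ for at most one $n$, $a_3$ is the maximum of $G_n$ for at most one $n$, and, since $b_{n+1}\in F_n\cap G_n$, it is the maximum of at most one of $F_n,G_n$; call the (at most two) indices with $a_2=\max F_n$ or $a_3=\max G_n$ exceptional.

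For a non-exceptional $n$ one argues by cases on $\max F_n$ and $\max G_n$. For instance, if $b_{n+1}\neq\max F_n$ and $b_{n+1}\neq\max G_n$, then non-exceptionality forces $\max F_n=c_n$ and $\max G_n=d_n$, so $c_n,d_n>b_{n+1}$; now $\max E_n$ can be neither $c_n$ nor $d_n$ (each is already the maximum of another block, contradicting the HF property), so $\max E_n=b_n$, whence $b_n>c_n>b_{n+1}$. If instead $b_{n+1}=\max F_n$ then, by the HF property, $b_{n+1}\neq\max G_n$, so non-exceptionality gives $\max G_n=d_n>b_{n+1}>c_n$; then $\max E_n=b_n$ (the option $d_n$ is excluded by the HF property and $c_n$ by $d_n>c_n$), and again $b_n>d_n>b_{n+1}$; the case $b_{n+1}=\max G_n$ is symmetric. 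So $b_n>b_{n+1}$ for every $n$ past the exceptional indices, contradicting well-foundedness of $<$.

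The main obstacle is precisely this last case analysis: one must verify that every choice of $<$-maxima of the blocks $E_n$, $F_n$, $G_n$ still forces $b_n>b_{n+1}$, the leverage being entirely the HF constraint that each of $a_2$, $a_3$, $b_{n+1}$, $c_n$, $d_n$ is the maximum of at most one of its blocks. The verifications that $N_\omega$ is a countable unconfined STS are routine.
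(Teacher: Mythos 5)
Your proof is correct, and while the model is the same one the paper uses (the union $N_\omega$ of a proper ascending chain of copies of $F_3$ built from the non-surjective self-embedding of \cite{barcas2}), your argument for non-freeness takes a genuinely different route. The paper deduces non-freeness from its model-theoretic machinery: if $N_\omega$ were free it would be the generic (Proposition~\ref{Feb_23_2024_4}), so $F_4$ would embed strongly into some $N_n\cong F_3$, hence elementarily by Proposition~\ref{April_11_2024_6}, contradicting Propositions~\ref{April_11_2024_5} and~\ref{April_11_2024_7}. You instead argue combinatorially: by Proposition~\ref{May12_2.2}, freeness is equivalent to the existence of a well-founded HF-ordering, and you show directly that in any HF-ordering of $N_\omega$ the constraint that each element is the maximum of at most one block forces $b_n>b_{n+1}$ for all but at most two indices $n$ (your case analysis on the maxima of $E_n$, $F_n$, $G_n$ is exhaustive and sound, and the needed distinctness of elements and blocks follows from freeness of $N_{n+1}$ over $\{b_{n+1},a_2,a_3\}$), so no HF-ordering is well-founded. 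Your verification of unconfinedness (finite subsets sit inside some $N_m\cong F_3$, which is unconfined by Proposition~\ref{unconfined}, and blocks are absolute between a subalgebra and the ambient STS) is also fine. What each approach buys: yours is self-contained and elementary, needing nothing from Sections~5--7, and it exhibits an explicit combinatorial witness (the descending chain of the $b_n$) for non-freeness; the paper's proof is shorter given the completeness and rigidity results already established and illustrates how that machinery distinguishes the finitely generated free models from $N_\omega$.
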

	\begin{proof} Let $N=F_3$ be  the free STS generated by three elements  $a_1,a_2,a_3$, and let \\ $b = (a_1 \cdot a_2) \cdot (a_1 \cdot a_3)$. As shown in~\cite{barcas2}, the homomorphism $f:N\rightarrow N$ determined by $f(a_1)=b$, $f(a_2)=a_2$ and $f(a_3)=a_3$ is an embedding but it is not surjective. Hence, there is some proper substructure of $N$ isomorphic to $N$. So we can define an ascending chain $(N_n\mid n<\omega)$ of STSs such that $N_n\cong N$ and $N_n\neq N_{n+1}$. Let $N_\omega=\bigcup_{n<\omega}N_n$. Then $N_\omega$ is unconfined and it is not finitely generated. Since it is countable, if it is free then it has a free base $B$ of $\omega$ elements and  it is (isomorphic to)  the generic. There is an embedding $f:F_4\rightarrow N_\omega$ such that  $f(F_4)\leq N_\omega$.  Choose $n$ such that the image in $f$ of  the base of $F_4$  is contained in $N_n$.  Then $f(F_4)\leq N_n$  and by Proposition~\ref{April_11_2024_6},  $f:F_4\rightarrow N_n$ is elementary. Since  $N_n\cong F_3$, this contradicts Proposition~\ref{April_11_2024_7}.
		Hence, $N_\omega$ is not free.
	\end{proof}

%	\bibliographystyle{plain}
%\bibliography{refs_free} 

\end{document}